\colorlet{MyBlue}{DodgerBlue!60!Black}
\colorlet{MyGreen}{DarkGreen!85!Black}
\numberwithin{equation}{section}  
\crefname{app}{Appendix}{Appendices}
\newcommand{\debug}[1]{{\color{black}#1}}
\theoremstyle{plain}
\newtheorem{theorem}{Theorem}
\newtheorem{corollary}[theorem]{Corollary}
\newtheorem*{corollary*}{Corollary}
\newtheorem{lemma}[theorem]{Lemma}
\newtheorem{proposition}[theorem]{Proposition}
\newtheorem{conjecture}[theorem]{Conjecture}
\theoremstyle{definition}
\newtheorem*{definition*}{Definition}
\newtheorem*{hypothesis*}{Hypothesis}
\theoremstyle{remark}
\newtheorem{remark}[theorem]{Remark}
\newtheorem*{remark*}{Remark}
\newtheorem*{notation*}{Notational remark}
\numberwithin{theorem}{section}
\DeclarePairedDelimiter{\norm}{\lVert}{\rVert}
\DeclarePairedDelimiter{\ceil}{\lceil}{\rceil}
\DeclarePairedDelimiterX{\braket}[2]{\langle}{\rangle}{#1,#2}
\DeclarePairedDelimiterX{\inner}[2]{\langle}{\rangle}{#1,#2}
\DeclarePairedDelimiterX{\setdef}[2]{\{}{\}}{#1:#2}
\DeclarePairedDelimiterXPP{\probof}[1]{\Prob}{(}{)}{}{%
	
	#1}
\DeclarePairedDelimiterXPP{\exof}[1]{\Expect}{[}{]}{}{%
	
	#1}
\newcommand{\abs}[1]{\left|#1\right|}
\newcommand{\tonde}[1]{\left(#1\right)}
\newcommand{\quadre}[1]{\left[#1\right]}
\newcommand{\ttonde}[1]{\big(#1\big)}
\newcommand{\emparg}{\,\cdot\,}
\newcommand{\emp}{\varnothing}
\newcommand{\eqdef}{\coloneqq}
\newcommand{\car}{\mathds{1}}
\renewcommand{\complement}{c}
\newcommand{\fstop}{\; \text{.}}
\newcommand{\comma}{\; \text{,}\;\;}
\newcommand{\eps}{\varepsilon}
\newcommand{\Expect}{\mathbf{\debug E}}
\newcommand{\Prob}{\mathbf{\debug P}}
\newcommand{\ind}{\mathds{1}}
\newcommand{\cA}{\ensuremath{\mathcal A}} 
\newcommand{\cC}{\ensuremath{\mathcal C}} 
\newcommand{\cD}{\ensuremath{\mathcal D}} 
\newcommand{\cE}{\ensuremath{\mathcal E}} 
\newcommand{\cF}{\ensuremath{\mathcal F}} 
\newcommand{\cG}{\ensuremath{\mathcal G}} 
\newcommand{\cH}{\ensuremath{\mathcal H}} 
\newcommand{\cJ}{\ensuremath{\mathcal J}} 
\newcommand{\cL}{\ensuremath{\mathcal L}} 
\newcommand{\cM}{\ensuremath{\mathcal M}} 
\newcommand{\cN}{\ensuremath{\mathcal N}}
\newcommand{\cQ}{\ensuremath{\mathcal Q}}
\newcommand{\cW}{\ensuremath{\mathcal W}}
\newcommand{\E}{\ensuremath{\mathbb{E}}}
\newcommand{\N}{\ensuremath{\mathbb{N}}}
\newcommand{\Q}{\ensuremath{\mathbb{Q}}}
\renewcommand{\P}{\ensuremath{\mathbb{P}}}
\newcommand{\Bin}{\text{\normalfont Bin}}
\newcommand{\img}{\text{\normalfont Im}}
\newcommand{\clr}{c}
\def\({\left(}
\def\){\right)}
\def\[{\left[}
\def\]{\right]}
\newacro{NE}{Nash equilibrium}
\newacro{PNE}{pure Nash equilibrium}
\newacro{MNE}{mixed Nash equilibrium}
\newacro{PFNE}{prior-free Nash equilibrium}
\newacro{WE}{Wardrop equilibrium}
\newacro{SO}{socially optimum}
\newacro{SU}{social utility}
\newacro{BEq}{best equilibrium}
\newacro{WEq}{worst equilibrium}
\newacro{KKT}{Karush\textendash Kuhn\textendash Tucker}
\newacro{OD}[O/D]{origin-destination}
\newacro{PoA}{price of anarchy}
\newacro{PoS}{price of stability}
\newacro{PoCS}{price of correlated stability}
\newacro{BPR}{bureau of public roads}
\newacro{FIP}{finite improvement property}
\newacro{CLT}{central limit theorem}
\newacro{BPG}{buck-passing game}
\newacro{SBPG}{stochastic buck-passing game}
\newacro{MBPG}{mixed extension of the buck-passing game}
\begin{document}
	\title[On the meeting of random walks on random DFA]{On the meeting of random walks on random DFA}
	
	\author[M.~Quattropani]{Matteo Quattropani$^{\dagger}$}
	\address{$^{\dagger}$ Dipartimento di Matematica ``Guido Castelnuovo'', Sapienza Universit\`a di Roma, Piazzale Aldo Moro 5, 00185, Roma, Italy}
	\email{matteo.quattropani@uniroma1.it}
	\author[F.~Sau]{Federico Sau$^\star$}
	\address{$^\star$ Dipartimento di Matematica e Geoscienze, Universit\`a degli Studi di Trieste, Via Valerio 12/1, 34127, Trieste, Italy}
	\email{federico.sau@units.it}              %
	\begin{abstract}
		We consider two random walks evolving synchronously on a random out-regular graph of $n$ vertices with bounded out-degree $r\ge 2$, also known as a random Deterministic Finite Automaton (DFA). We show that, with high probability with respect to the generation of the graph, the meeting time of the two walks is stochastically dominated by a geometric random variable of rate $(1+o(1))n^{-1}$, uniformly over their starting locations. Further, we prove that this upper bound is typically tight, i.e., it is also a lower bound when the locations of the two walks are selected uniformly at random. Our work takes inspiration from a recent conjecture by Fish and Reyzin \cite{FR2017} in the context of computational learning, the connection with which is discussed.
	\end{abstract}
\maketitle
	\section{Introduction}
	Since the seminal work of Cox \cite{cox_coalescing1989}, \emph{coalescing  random walks} has become  a classical subject in probability, the last decade, in particular, registering several 	important  developments. 
	In the reversible setting, for instance, the works \cite{cooper_frieze_radzik_2009,OtrAMS2012,cooper_elsasser_ono_radzik_2013,kanade_mallmann-trenn_sauerwald2019,oliveira_peres_2019} establish a number of estimates for the mean coalescing time in terms of meeting, hitting, returning, and relaxation times.
	In the more general context of non-reversible  random walks, the work by Oliveira  \cite{oliveira_mean2013} characterizes the limit distribution of the coalescence time under suitable mean field conditions. 
	Perhaps the most striking consequence of these  conditions is that they ensure that the timescale at which coalescence occurs coincides with that of the meeting time of two random walks starting from equilibrium. This result nearly solves  Open Problem 14.12 in \cite{aldous-fill-2014}, and reinforces the intuition that, in this context and on this timescale, the number of coalescing random walks must be well-approximated by the number of partitions in Kingman's coalescent (see  \cite{beltran_chavez_landim2019} and references therein).
	Moreover, such mean field conditions  are, on the one hand, easily verifiable in several concrete examples, as they involve estimates essentially only on  the mixing time and invariant measure  of the single walk; on the other hand, they are very general -- they  do not require  reversibility, for instance (cf.\ \cite[Theorem 1.2]{oliveira_mean2013}).

	The study  in \cite{oliveira_mean2013} provides a fairly general framework in which the connection between meeting and coalescence times is well-understood. However, in each of these situations, 
	extracting finer quantitative information on coalescence  must still necessarily go through the problem of quantitatively analyzing  the meeting of two walks. Solving the  latter  requires  \emph{ad hoc} analyses depending on the graph of interest, and, for random walks on random graphs, it has  been addressed only in the regular undirected  setting (\cite{cooper_frieze_radzik_2009}).
	
	\
	
	In this work, we  quantitatively analyze the \emph{meeting time} of two random walks on a model of  \emph{sparse random directed graphs}. Such random walks evolve independently, and, as most commonly done  in the theoretical computer science literature, we model  them to move in discrete synchronous rounds. The strategy that we adopt in our analysis is related to that in \cite{cooper_frieze_radzik_2009}, in which the authors are concerned, among other things, with analogous quantitative estimates for walks on random regular graphs. In our context, though, the directness of the graph is what makes the analysis much more involved. For instance, the stationary distribution of a sparse random digraph is a highly non-trivial random object, whose properties cannot be inferred from a local analysis of the  graph.
	
	\emph{Random walks on random directed graphs} is, in fact, an emerging topic in the field, with a number of advances in the last few years for what concerns  the study of  \textcolor{black}{total-variation} mixing times (\cite{BCSrwrd,bordenave_cutoff2019,caputo_quattropani_2021_RSA,caputo_quattropani_2021_SPA}) and  stationary distributions (\cite{addario-berry_diameter2020,caputo_quattropani_2020,CPminimum2020,CCPQ2021}).
	All these works deal with the behavior of a single walk, while the results in our paper represent a first step toward the analysis of multiple walks on these geometries.
	In particular, we prove that, with high probability with respect to the generation of the graph, any two walks meet at a time which is stochastically dominated by a geometric random variable of mean $(1+o(1))n$. Further, we establish that this upper bound is typically tight,  turning it into an effective lower bound for when the two walks are selected uniformly at random.
	Finally, our quantitative results also relate to some open problems  within the framework of learning and synchronizing random DFAs, two important topics in machine learning and automata theory. (We refer to \cref{sec:motivation} below for a more thorough discussion on this connection.) 
	
	The main technical tool in our proofs is the so-called \emph{First Visit Time Lemma} (FVTL), originally introduced by Cooper and Frieze in \cite{cooper_frieze_2004}, and recently reinterpreted by the authors of \cite{manzo_quattropani_scoppola_2021} within the framework of quasi-stationary distributions. The FVTL provides sharp asymptotic estimates for the tail probabilities of the hitting time of a given state of a Markov chain, when the process starts from stationarity. As in \cite{cooper_frieze_radzik_2009}, we recast the original \textquoteleft meeting problem\textquoteright\ for the two walks into a \textquoteleft hitting problem\textquoteright\ for the product chain, by considering all diagonal elements as merged so to form the single target state. The FVTL is then applied to a natural \emph{auxiliary chain} resulting from this procedure. In the undirected setting, this auxiliary chain is just the product chain in which all diagonal elements have been collapsed into a single vertex, retaining all the edges; clearly, this operation preserves the stationary distribution of all the off-diagonal states. 
	This strategy gets  more involved when the underlying graph is directed. We overcome this difficulty by adopting the generalization of the auxiliary chain  recently introduced in \cite{manzo_quattropani_scoppola_2021}, and  derive refined bounds for its stationary distribution and mixing times,  yielding sharp asymptotics for the meeting time of two independent walks.

	\
	
	The rest of the paper is organized as follows. In \cref{sec:results}, we present the model and the corresponding main results. In particular, in  \cref{sec:motivation}, we link our results to some open problems within the framework of learning and synchronizing random DFAs. In \cref{sec:preliminaries}, we introduce the auxiliary chain and state the FVTL. \cref{sec:meeting-stationarity} contains the main technical contribution of the paper, in which we establish the precise asymptotic distribution of the meeting time of two walks starting from stationarity. The proof of the latter is split into several lemmas, \textcolor{black}{and its organization is spelled out in detail in \cref{sec:organization}}. Finally, \cref{sec:theorems_proofs} is devoted to the proofs of our main results.   
	
	\section{Model, main results, and motivations}\label{sec:results}
	For $n, r \in \N\eqdef \{0,1,\ldots\}$ and $2 \le r\le n$, let
	\begin{align}\label{eq:V-C-f}
		V\eqdef [n]\eqdef \{1,\ldots, n\}\comma\qquad \cC\eqdef[r]\eqdef \{1,\ldots, r \}\comma\qquad \{f_x:\cC\to V\ \text{one-to-one}\}_{x\in V}\fstop
	\end{align}
	The triple $(V,\cC,\{f_x\}_{x\in V})$ is known  as a \emph{Deterministic Finite Automaton} (DFA) with  \emph{states} $V$ and \emph{alphabet} $\cC$. 	This can be equivalently represented as 
	a \emph{colored $r$-out regular graph}, where:
	\begin{itemize}
		\item $V$ is the vertex set;
		\item $\cC$ is the set of colors;
		\item $\img(f_x)\subset V$ are the $r$ out-neighbors of $x\in V$, with the directed edge $e=(x,f_x(c))$ uniquely endowed with the color $c\in \cC$.
	\end{itemize} In such a directed graph,  each vertex has one out-going edge for each color in $\cC$, possibly with self-loops, but with no multiple directed edges.

	Considering  \emph{random} mappings $\{f_x\}_{x\in V}$ gives rise to a  random realization $G=G^{(V,\cC)}$ of such an object, typically referred to as a \emph{random DFA}. In the language of  colored  graphs, this random construction goes as follows:	to each  $x\in V$, attach $r$ out-stubs (tails), one for each color in $\cC$, and  independently select $r$ elements in $V$ without replacement and attach to each of them a distinct colored \textcolor{black}{out-stub} of $x$. 	
	Note that such a random DFA is uniformly distributed over all possible  DFA with  states $V$ and alphabet $\cC$.
	
	Given a realization of a random DFA, the \emph{random walk on $G$} is the (discrete-time) Markov chain $(X_t)_{t\in \N} \in V^\N$, with laws $(\Prob_x)_{x\in V}$ such that $\Prob_x(X_0=x)=1$ induced by the transition matrix $P=P^{(G)}$ given by	
	\begin{align*}
		P(x,y)\eqdef \frac{1}{r}\sum_{\clr\in \cC} \car_{\{y\}}(f_x(\clr))\comma\qquad x , y \in V\fstop
	\end{align*}
	In words,  at each step, the  walk	 selects  uniformly at random a color $\clr\in \cC$ and follows the unique outgoing edge  having that color. Note that, for every $x\in V$, paths of length $t\in \N$ under $\Prob_x$ can be sampled by choosing uniformly at random an element of $\cC^t$. We will refer to an element $w \in \cC^t$ as a \emph{word of length $t$}.

	Our main results concern two such walks evolving  \emph{synchronously} and  \emph{independently}. This system of two walks corresponds to the product Markov chain $(\mathbf X^{(2)}_t)_{t\in \N}=(X^{(1)}_t, X^{(2)}_t)_{t\in \N} \in (V^\N)^2$ with laws $(\Prob_{(x,y)}=\Prob^{(2)}_{(x,y)})_{(x,y)\in V^2}$ induced by the transition matrix
	$P^{(2)}\eqdef P\otimes P$. In this case, for every $(x,y)\in V^2$, paths	 of length $t \in \N$ under $\Prob_{(x,y)}$ are sampled by choosing two \emph{independent} random words of length $t$.  For such a product chain, we refer to the following stopping time
	\begin{align}\label{eq:tau_meet}
		\tau_{\rm meet}\eqdef \inf\{t\in \N: X^{(1)}_t=X^{(2)}_t\}\comma 
	\end{align}
	as the \emph{meeting time} of the two walks.

	Our analysis is carried out in an asymptotic setting, in which the vertex set grows ($n\to \infty$), while the number of colors stays fixed ($r\in \N$, $r \ge 2$). As a consequence, $r$ is often omitted from the notation, and all the asymptotic notation refers (often implicitly) to the limit $n\to \infty$. Finally, the following notation will be used all throughout:
	\begin{itemize}
		\item $(\Omega,\cF,\P)=(\Omega^{(n)},\cF^{(n)},\P^{(n)})$ denotes the probability space of the random DFA $G=G^{(n)}$, with $\E=\E^{(n)}$ denoting the corresponding expectation.
		\item For two  sequences $Y=Y^{(n)}$ and $Z=Z^{(n)}$ of random variables (both measurable with respect to  the random DFA $G=G^{(n)}$), we write
		\begin{align*}
			Y\overset{\P}\longrightarrow Z\qquad \overset{\circ}\Longleftrightarrow\qquad \lim_{n\to \infty}\P\tonde{\abs{Y-Z}>\eps}=0\comma\qquad \eps >0\fstop
		\end{align*}
		\item For a sequence $\cE=\cE^{(n)}$ of events in $\Omega=\Omega^{(n)}$, \textquotedblleft $\cE$ occurs w.h.p.\textquotedblright\ if $\lim_{n\to \infty}\P(\cE)=1$.
	\end{itemize}
	We  now  present our main results.
	\begin{theorem}\label{th:1}
		There exist 	random variables $\Lambda=\Lambda^{(n)} \in (0,1)$ such that		
		\begin{align}\label{eq:Lambda}
			\Lambda\, n  \overset{\P}{\longrightarrow} 1\comma
		\end{align}
		and, for every $\eps >0$, w.h.p., 	
		\begin{align}\label{eq:th1-ub}
			\sup_{t\ge 0} \max_{x,y \in V}\frac{\Prob_{(x,y)}\tonde{\tau_{\rm meet}>t}}{(1-\Lambda)^t}<1+\eps\fstop
		\end{align}
	\end{theorem}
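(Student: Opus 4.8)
The plan is to recast the meeting problem as a single-target hitting problem, apply the First Visit Time Lemma (FVTL) to the resulting chain, and treat the short-time regime by hand. Since $\tau_{\rm meet}$ is precisely the hitting time of the diagonal $\Delta\eqdef\{(v,v):v\in V\}$ by the product chain $\mathbf X^{(2)}$, I would first introduce, following \cite{cooper_frieze_radzik_2009} but in the generalized form of \cite{manzo_quattropani_scoppola_2021} needed to accommodate the lack of reversibility, the \emph{auxiliary chain} $\hat P$ on the state space $(V^2\setminus\Delta)\cup\{\star\}$ obtained by contracting $\Delta$ to a single vertex $\star$: $\hat P$ is ergodic, its transitions coincide with those of $\mathbf X^{(2)}$ until $\Delta$ is hit, and it is built so that its stationary law $\hat\pi$ agrees with $\rho\otimes\rho$ on the off-diagonal states (in particular $\hat\pi(\star)=\sum_{v}\rho(v)^2$). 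Consequently, for every off-diagonal pair $(x,y)$ and every $t\ge 0$,
\begin{equation*}
	\Prob_{(x,y)}(\tau_{\rm meet}>t)=\Prob^{\hat P}_{(x,y)}\big(\hat\tau_\star>t\big),
\end{equation*}
so it suffices to bound the tail of the hitting time $\hat\tau_\star$ of $\star$ by $\hat P$, uniformly over the starting state.

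The heart of the argument is the FVTL (as recalled in \cref{sec:preliminaries}) applied to $(\hat P,\star)$. Its conclusion furnishes a parameter $\Lambda=\lambda_\star\in(0,1)$ — which, up to lower-order corrections, equals $\hat\pi(\star)$ divided by the truncated return sum $R_\star\eqdef\sum_{s\ge 0}\big(\hat P^{\,s}(\star,\star)-\hat\pi(\star)\big)$ cut at a mixing time $T$ of $\hat P$ — together with the bound, uniform in the starting state $z$ and valid for all $t\ge T$,
\begin{equation*}
	\Prob^{\hat P}_{z}\big(\hat\tau_\star>t\big)\le(1+o(1))\,(1-\Lambda)^{t}.
\end{equation*}
Invoking the FVTL requires verifying, w.h.p.\ over the random DFA, that: (i) $\hat P$ mixes in time $T=O(\log n)$; (ii) $\hat\pi(\star)=O(1/n)$, so that $\hat\pi(\star)\,T=o(1)$; and (iii) the return structure at $\star$ is tame — two walks that have just met pick distinct colours with probability $1-\tfrac1r$ and then occupy essentially independent vertices that, w.h.p., do not re-meet within the mixing window, which forces $R_\star=\tfrac{r}{r-1}+o(1)$. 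These verifications, together with the sharp asymptotics $n\,\hat\pi(\star)=n\sum_v\rho(v)^2\overset{\P}{\longrightarrow}\tfrac{r}{r-1}$ — which rests on precise control of the single-walk stationary distribution $\rho$ (whose rescaled values $n\rho(v)$ have fluctuation variance $\tfrac1{r-1}$, traceable to the Poissonian in-degrees) — are exactly the content of \cref{sec:meeting-stationarity}, and they give $\Lambda\,n\overset{\P}{\longrightarrow}1$, i.e.\ \eqref{eq:Lambda}. I stress that neither $\hat\pi(\star)$ nor $R_\star$ is trivially $\tfrac1n$ and $1$, as a naive uniform-stationarity guess would suggest; it is their conspiracy to produce the clean value $\Lambda n\to1$ that must be established.

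Finally, upgrading the bound from $t\ge T$ to all $t\ge 0$ is routine: for $t\le T=O(\log n)$ one has $(1+\eps)(1-\Lambda)^{t}\ge(1+\eps)(1-\Lambda T)=(1+\eps)(1-o(1))\ge1\ge\Prob_{(x,y)}(\tau_{\rm meet}>t)$ for all large $n$, since $\Lambda T=O(\log n/n)\to0$, while for $t\ge T$ the claim is the displayed FVTL bound with $z=(x,y)$ ranging over off-diagonal pairs; taking $n$ large enough that the $o(1)$ there drops below $\eps$, on an event of probability tending to $1$, yields \eqref{eq:th1-ub}. The main obstacle is clearly the second step: because the graph is directed, $\hat P$ is a genuinely new, non-reversible chain whose mixing time and stationary mass at $\star$ are not local functionals of $G$ and must be pinned down sharply and simultaneously, and identifying the constant in $R_\star$ — hence in $\Lambda$ — demands a careful analysis of the short-time joint evolution of two coupled walks immediately after a meeting. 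The reduction to a hitting problem, the appeal to the (already stated) FVTL, and the short-time regime are all standard.
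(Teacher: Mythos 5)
Your proposal takes essentially the same route as the paper: interpret $\tau_{\rm meet}$ as the hitting time of the collapsed diagonal for the auxiliary chain of \cite{manzo_quattropani_scoppola_2021}, verify the FVTL hypotheses for it (polylogarithmic mixing, $n\tilde\pi(\Delta)\to \frac{r}{r-1}$, and return sum $\to\frac{r}{r-1}$, whence $\Lambda n\to1$), and then handle worst-case starting points and the short-time regime via $\Lambda T=o(1)$ — exactly the structure of \cref{pr:fvtltilde} plus the conclusion of \cref{sec:theorems_proofs}. The only imprecision is that the FVTL as recalled in \cref{sec:preliminaries} gives the geometric tail only from the stationary start, so the uniformity over starting states for $t\ge T$ that you attribute directly to it in fact requires the additional (mixing-based) comparison between worst-case and stationary tails, which the paper supplies through \cref{lemma:eq-is-far} and \cref{rmk:general}; with that small step made explicit, your argument matches the paper's proof of \cref{th:1}.
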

	
	In words, \cref{th:1} states that for a typical realization of a random DFA, uniformly over the starting positions of two independent walks, the tails of their meeting time are bounded above by those of a geometric random variable of mean $(1+o(1)) n$.

	As an improvement of this result,  we show that the upper bound in \cref{eq:th1-ub} is  tight for most couples $(x,y)$, $x\neq y$; this is the content of the following:
	\begin{theorem}\label{th:2}
		Recall $\Lambda=\Lambda^{(n)}$ from \cref{th:1}. Then, for any couple $(x,y)=(x^{(n)},y^{(n)})\in V^2$ of distinct states,
		\begin{align}
			\sup_{t\ge 0}\abs{\frac{\Prob_{(x,y)}\tonde{\tau_{\rm meet}>t}}{(1-\Lambda)^t}-1}\overset{\P}\longrightarrow 0\fstop
		\end{align}
	\end{theorem}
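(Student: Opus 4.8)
The plan is to complement the upper bound of \cref{th:1} --- which is uniform over starting pairs and so already supplies the estimate $\sup_{t\ge0}\Prob_{(x,y)}(\tau_{\rm meet}>t)/(1-\Lambda)^{t}<1+\eps$ for the fixed pair $(x,y)$ --- with a matching \emph{lower} bound, in which lies all the content of \cref{th:2}. Concretely, it suffices to show that for each fixed $\eps>0$, w.h.p.,
\[
\Prob_{(x,y)}\tonde{\tau_{\rm meet}>t}\ \ge\ (1-\eps)(1-\Lambda)^{t}\qquad\text{for all }t\ge0,
\]
since then $\sup_{t\ge0}\bigl|\Prob_{(x,y)}(\tau_{\rm meet}>t)/(1-\Lambda)^{t}-1\bigr|\le2\eps$ w.h.p.\ by combining with \cref{eq:th1-ub}, and $\eps$ is arbitrary. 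Fix distinct $x=x^{(n)}$, $y=y^{(n)}$ and an intermediate scale $T=T^{(n)}$ with $t_{\mathrm{mix}}^{(2)}\ll T\ll n$ (for instance $T=\lceil\log^{2}n\rceil$, recalling that w.h.p.\ the single walk --- hence the product chain $P^{(2)}$ --- mixes in time $O(\log n)$ \cite{BCSrwrd}). The idea is to transport the meeting-from-stationarity asymptotics to the fixed start $(x,y)$, using that the two walks do not meet before time $T$ and that by time $T$ their joint law has relaxed to $\stationary\otimes\stationary$, where $\stationary$ is the invariant law of the single walk.

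First I would carry out the analysis announced for \cref{sec:meeting-stationarity}, which is the hard part: recast $\tau_{\rm meet}$ as the hitting time of the merged diagonal of the product chain, pass to the associated auxiliary chain in the sense of \cite{manzo_quattropani_scoppola_2021}, and apply the First Visit Time Lemma \cite{cooper_frieze_2004}. Granting the requisite sharp control on the invariant measure and on the mixing time of that auxiliary chain, one obtains: w.h.p., uniformly in $t\ge0$,
\[
\Prob_{\stationary\otimes\stationary}\tonde{\tau_{\rm meet}>t}=(1+o(1))(1-\Lambda)^{t},\qquad\text{in particular,}\qquad \Prob_{\stationary\otimes\stationary}\tonde{\tau_{\rm meet}>t}\ge(1-\eps)(1-\Lambda)^{t}.
\]
This is where the directedness of $G$ really bites --- $\stationary$ is a genuinely random, non-local object, unlike in the undirected setting of \cite{cooper_frieze_radzik_2009} --- and I expect it to be the main obstacle of the whole argument.

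Next I would establish two facts about the fixed start. \emph{(a) No early meeting:} w.h.p.\ $\Prob_{(x,y)}(\tau_{\rm meet}\le T)=o(1)$. Since the two walks are driven by independent words, $\Prob_{(x,y)}(X^{(1)}_{i}=X^{(2)}_{i})=\sum_{w}\Prob_{x}(X_{i}=w)\,\Prob_{y}(X_{i}=w)$, and a first-moment computation over the random DFA bounds $\sum_{i\le T}\E\bigl[\Prob_{(x,y)}(X^{(1)}_{i}=X^{(2)}_{i})\bigr]$ by $\mathrm{polylog}(n)/n$: for $i\lesssim\log n$ one uses that $f_{x}$ and $f_{y}$ are independent, so the two walks' local neighbourhoods are very unlikely to overlap on a polylogarithmic scale; for $i\gtrsim\log n$ one uses mixing together with $\|\stationary\|_{\infty}=O(\log n/n)$ w.h.p.; Markov's inequality over $G$ then yields $\Prob_{(x,y)}(\tau_{\rm meet}\le T)\le n^{-1/2}$ w.h.p. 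It is exactly this step that fails for \emph{adversarially} chosen, close pairs, which is why \cref{th:2} is stated for fixed pairs. \emph{(b) Relaxation:} w.h.p.\ $\bigl\|\bigl(P^{(2)}\bigr)^{T}\bigl((x,y),\cdot\bigr)-\stationary\otimes\stationary\bigr\|_{\mathrm{TV}}=o(1)$; since $P^{(2)}=P\otimes P$, this reduces, via $\|\mu_{1}\otimes\mu_{2}-\nu_{1}\otimes\nu_{2}\|_{\mathrm{TV}}\le\|\mu_{1}-\nu_{1}\|_{\mathrm{TV}}+\|\mu_{2}-\nu_{2}\|_{\mathrm{TV}}$, to the single-walk mixing bound $t_{\mathrm{mix}}=O(\log n)\ll T$.

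Finally I would assemble these. For $0\le t<T$ one simply uses $\Prob_{(x,y)}(\tau_{\rm meet}>t)\ge\Prob_{(x,y)}(\tau_{\rm meet}>T)=1-o(1)\ge(1-\eps)(1-\Lambda)^{t}$, since $(1-\Lambda)^{t}\le1$. For $t\ge T$ I would decompose at time $T$ by the Markov property,
\[
\Prob_{(x,y)}\tonde{\tau_{\rm meet}>t}=\sum_{a\ne b}\Prob_{(x,y)}\tonde{\tau_{\rm meet}>T,\ \mathbf X_{T}=(a,b)}\,\Prob_{(a,b)}\tonde{\tau_{\rm meet}>t-T},
\]
then discard the contribution of $\{\tau_{\rm meet}\le T\}$ and replace $\bigl(P^{(2)}\bigr)^{T}((x,y),\cdot)$ by $\stationary\otimes\stationary$; using (a), (b) and the \emph{upper} bound of \cref{th:1} to control $\Prob_{(a,b)}(\tau_{\rm meet}>t-T)$, both modifications cost only $o(1)(1-\Lambda)^{t-T}$, and the diagonal terms vanish since $\Prob_{(a,a)}(\tau_{\rm meet}>\cdot)=0$. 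This gives $\Prob_{(x,y)}(\tau_{\rm meet}>t)\ge\Prob_{\stationary\otimes\stationary}(\tau_{\rm meet}>t-T)-o(1)(1-\Lambda)^{t-T}\ge(1-\eps-o(1))(1-\Lambda)^{t-T}$ by the previous step, and finally $(1-\Lambda)^{t-T}\ge(1-\Lambda)^{t}$ with $(1-\Lambda)^{-T}=1+o(1)$ because $\Lambda T=\Theta(T/n)\to0$ by \cref{eq:Lambda}. Hence, w.h.p., the displayed lower bound holds for all $t\ge0$ (with $2\eps$ in place of $\eps$), which together with \cref{th:1} and the arbitrariness of $\eps$ yields $\sup_{t\ge0}\bigl|\Prob_{(x,y)}(\tau_{\rm meet}>t)/(1-\Lambda)^{t}-1\bigr|\overset{\P}\longrightarrow 0$.
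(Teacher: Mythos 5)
Your proposal is correct and follows essentially the same route as the paper's proof of \cref{th:2}: reduce to a matching lower bound, show by an annealed first-moment argument that the fixed pair w.h.p.\ does not meet within a polylogarithmic time, decompose at that time via the Markov property, use product-chain mixing to replace the time-$T$ law by $\pi\otimes\pi$, and conclude via \cref{pr:fvtltilde} together with the uniform-over-starting-states upper bound. The only cosmetic differences are your choice $T=\log^2 n$ in place of the paper's $s=\log^5 n$, and your handling of the replacement error by a TV bound combined with the upper bound of \cref{th:1}, where the paper uses the pointwise multiplicative comparison \cref{eq:mix-product}; both variants work.
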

	
	As an immediate consequence of \cref{eq:Lambda} and \cref{th:2}, we get:
	\begin{corollary}\label{coro:mean-meeting}
		For any couple $(x,y)=(x^{(n)},y^{(n)})\in V^2$ of distinct states,
		\begin{align*}
			\frac{\Expect_{(x,y)}\quadre{\tau_{\rm meet}}}{n}\overset{\P}{\longrightarrow}	1\fstop
		\end{align*} 
	\end{corollary}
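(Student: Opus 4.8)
The plan is to derive \cref{coro:mean-meeting} directly from \cref{th:2} and \cref{eq:Lambda}, the only genuine input being that the control provided by \cref{th:2} is \emph{uniform} over $t\ge0$; this is exactly what licenses summing the tail bounds term by term. Fix distinct states $(x,y)=(x^{(n)},y^{(n)})$ and $\eps>0$, and pick $\delta=\delta(\eps)\in(0,1)$ small enough that $1-\eps<\tfrac{1-\delta}{1+\delta}$ and $\tfrac{1+\delta}{1-\delta}<1+\eps$. First I would introduce the two events
\begin{equation*}
	\cA_n\eqdef\Bigl\{\sup_{t\ge0}\Bigl|\tfrac{\Prob_{(x,y)}(\tau_{\rm meet}>t)}{(1-\Lambda)^t}-1\Bigr|<\delta\Bigr\}\comma\qquad \cB_n\eqdef\bigl\{\abs{\Lambda n-1}<\delta\bigr\}\fstop
\end{equation*}
By \cref{th:2} we have $\P(\cA_n)\to1$, by \cref{eq:Lambda} we have $\P(\cB_n)\to1$, and hence $\P(\cA_n\cap\cB_n)\to1$.

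Next I would carry out the deterministic computation valid on $\cA_n\cap\cB_n$. On $\cA_n$ the two-sided bound
\begin{equation*}
	(1-\delta)\,(1-\Lambda)^t\ \le\ \Prob_{(x,y)}(\tau_{\rm meet}>t)\ \le\ (1+\delta)\,(1-\Lambda)^t
\end{equation*}
holds for every $t\ge0$; since $\Lambda\in(0,1)$ this already forces $\Prob_{(x,y)}(\tau_{\rm meet}<\infty)=1$, and, combining the tail-sum identity $\Expect_{(x,y)}\quadre{\tau_{\rm meet}}=\sum_{t\ge0}\Prob_{(x,y)}(\tau_{\rm meet}>t)$ with $\sum_{t\ge0}(1-\Lambda)^t=\Lambda^{-1}$, one obtains
\begin{equation*}
	\frac{1-\delta}{\Lambda}\ \le\ \Expect_{(x,y)}\quadre{\tau_{\rm meet}}\ \le\ \frac{1+\delta}{\Lambda}\fstop
\end{equation*}
Dividing by $n$ and using $\Lambda n\in(1-\delta,1+\delta)$, which holds on $\cB_n$, one gets on $\cA_n\cap\cB_n$ the chain $1-\eps<\tfrac{1-\delta}{1+\delta}\le n^{-1}\Expect_{(x,y)}\quadre{\tau_{\rm meet}}\le\tfrac{1+\delta}{1-\delta}<1+\eps$. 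Therefore $\P\bigl(\abs{n^{-1}\Expect_{(x,y)}\quadre{\tau_{\rm meet}}-1}<\eps\bigr)\ge\P(\cA_n\cap\cB_n)\to1$, which is the claimed convergence in probability.

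There is essentially no real obstacle here: as the text already indicates, the corollary is an immediate consequence of the preceding theorem. The one subtlety worth flagging is that mere pointwise (in $t$) control of the ratios $\Prob_{(x,y)}(\tau_{\rm meet}>t)/(1-\Lambda)^t$ would \emph{not} suffice, since a limit in probability cannot in general be exchanged with an infinite sum; it is precisely the uniformity over $t\ge0$ recorded in \cref{th:2} that makes the term-by-term sandwich legitimate. (Finiteness of $\Expect_{(x,y)}\quadre{\tau_{\rm meet}}$ is likewise not an issue: it already follows w.h.p.\ and uniformly in $(x,y)$ from the upper bound \cref{eq:th1-ub} in \cref{th:1}.)
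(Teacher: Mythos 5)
Your proof is correct and follows exactly the route the paper intends: the corollary is stated there as an immediate consequence of \cref{eq:Lambda} and \cref{th:2}, and your argument simply spells out that deduction (uniform-in-$t$ sandwich of the tails, tail-sum identity, then $\Lambda n\to1$ in probability). The point you flag about uniformity over $t$ being what justifies summing the bounds term by term is exactly the reason the paper's uniform formulation of \cref{th:2} suffices.
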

	
		\begin{color}{black}
			It is worth to remark that the distribution of the meeting times in \cref{th:1,th:2} does not depend on the choice of the out-degree $r$. We postpone a discussion on this point to \cref{rmk:indep-r-2}.
	\end{color}
	
	\subsection{Motivation and related open problems: reconstructing and synchronizing random DFAs}\label{sec:motivation}
	DFA is a classical model in the theory of computation (see, e.g., \cite{hopcroft2001introduction}), and its first appearance in the literature can be traced back to \cite{mcculloch1943logical}.  
	We  recall that, for a given  DFA $(V,\cC,\{f_x \}_{x\in V})$ (cf.\ \cref{eq:V-C-f}) and for every $t\in \N$, $\cC^t$ denotes the set of words of length $t$; further, for a given state $v$ and a word $w$ of finite length, then $w(v)$ indicates the state reached by following the letters of $w$ when starting from $v$.

	\subsubsection{Learning a  DFA, and meeting times}\label{sec:learning}
	Usually, a DFA is equipped with a special state $v$ called \emph{root} and a subset of \emph{accepting states} $F\subseteq V$, in which case one speaks about a (deterministic finite) \emph{acceptor} $(V,\cC,\{f_x \}_{x\in V},v,F)$.
	Acceptors constitute a very simple model of a finite-state machine that accepts or rejects a given word (of finite length) $w$ depending on whether $w(v)\in F$ or not. The set of all finite accepted words for a given acceptor is  referred to as the language recognized by the acceptor. A prominent problem in computational learning theory is that of \emph{reconstructing the language} of an underlying acceptor  given a  set of information provided by an oracle. Such learning problems, when associated to a \emph{worst case} underlying acceptor, are notoriously extremely hard to solve (see, e.g., \cite{angluin1981}). For this reason, part of the recent literature on the subject is devoted to an \emph{average case analysis}, in which the acceptor -- and, in particular, the associated DFA -- is chosen at random. 
	
	In the attempt to provide an efficient algorithm to learn a random acceptor, the authors in \cite{FR2017} propose an open problem that can be rephrased in terms of random walks on a random DFA. For a fixed $t\in\N$, let $\mathbf{Q}=\mathbf{Q}_t$ be the uniform distribution over $\cC^t$, and $W_t$ a random word sampled according to $\mathbf{Q}$.  Fish and Reyzin's conjecture reads as follows:
	\begin{conjecture}[\cite{FR2017}]\label{conj:fish}
		There exists a constant $c>0$ such that, for any couple $(x,y)=(x_n,y_n)\in V^2$ and for every $b>0$, w.h.p.,
		\begin{equation}
			\mathbf{Q}\tonde{ W_{cn}(x)\neq W_{cn}(y) } \le n^{-b} \fstop
		\end{equation}
	\end{conjecture}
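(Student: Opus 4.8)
The plan is to recognize $\mathbf{Q}\tonde{W_t(x)\neq W_t(y)}$ as the tail probability of a meeting time, and then deploy the same First Visit Time Lemma machinery that underlies \cref{th:1,th:2}, but applied to the \emph{synchronously} coupled pair. Indeed, $W_t(x)$ and $W_t(y)$ are produced by feeding one and the same word $W_t\in\cC^t$ into the automaton from the two seeds $x$ and $y$; since the automaton is deterministic, once $W_s(x)=W_s(y)$ the two states agree at every later time, so $\{W_t(x)\neq W_t(y)\}$ is non-increasing in $t$ and
\[
\mathbf{Q}\tonde{W_t(x)\neq W_t(y)}=\Prob^{\mathrm{syn}}_{(x,y)}\tonde{\tau^{\mathrm{syn}}_{\mathrm{meet}}>t}\comma
\]
where $\tau^{\mathrm{syn}}_{\mathrm{meet}}$ is the meeting time of the pair chain $(W_t(x),W_t(y))_{t\in\N}$ on $V^2$ in which, at every step, \emph{a single} color $\clr\in\cC$ is drawn uniformly and \emph{both} coordinates follow their $\clr$-edge. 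The target of \cref{conj:fish} then becomes: show that, w.h.p.\ over $G$, this tail is at most $n^{-b}$ at time $t=cn$, uniformly over $(x,y)$.

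The pair process is itself a random walk on a (structured) random $r$-out-regular digraph on $V^2$: from $(a,b)$, color $\clr$ leads to $(f_a(\clr),f_b(\clr))$, and the diagonal $D=\{(v,v):v\in V\}$ is invariant, so $\tau^{\mathrm{syn}}_{\mathrm{meet}}$ is exactly the hitting time of $D$. Following \cref{sec:preliminaries}, I would collapse $D$ to a single state $\ast$, form the associated auxiliary chain $\widetilde P^{\mathrm{syn}}$ on $(V^2\setminus D)\cup\{\ast\}$, and apply the FVTL to obtain, with $\Lambda^{\mathrm{syn}}$ the escape rate into $\ast$ from quasi-stationarity,
\[
\sup_{t\ge 0}\ \max_{(x,y)}\ \frac{\Prob^{\mathrm{syn}}_{(x,y)}\tonde{\tau^{\mathrm{syn}}_{\mathrm{meet}}>t}}{(1-\Lambda^{\mathrm{syn}})^{t}}<1+\eps\qquad\text{w.h.p.}\comma
\]
the exact analogue of \cref{eq:th1-ub}. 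Making this rigorous requires the two inputs the paper already supplies in the independent case, now for the synchronized pair chain: (a) that $\widetilde P^{\mathrm{syn}}$ mixes in $O(\log n)$ steps, so that the FVTL remainder terms are negligible and the geometric regime is reached from every start; and (b) control of the quasi-stationary law of $\widetilde P^{\mathrm{syn}}$ on $V^2\setminus D$, through which $\Lambda^{\mathrm{syn}}$ is the average of the one-step escape probability $\tfrac{1}{r}\,\#\{\clr:f_a(\clr)=f_b(\clr)\}$ against that law.

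The main obstacle is input (b). Unlike the independent product chain --- whose stationary law is the product $\rho\otimes\rho$ of the single-walk stationary measure, so that $\Lambda=\sum_v\rho(v)^2$ up to short-time return corrections and hence $\Lambda n\to1$ --- the synchronized pair chain does \emph{not} have a product stationary law: drawing a common color correlates the two coordinates, and one must show directly that its quasi-stationary measure places no atypical mass on the $O(rn)$ ``dangerous'' pairs $(a,b)$ admitting a color $\clr$ with $f_a(\clr)=f_b(\clr)$. I would attack this through the second-moment and coupling estimates used for the single chain's invariant measure (\cite{caputo_quattropani_2020,CPminimum2020}), carried out on the pair digraph and conditioned on avoiding $D$, aiming to conclude $\Lambda^{\mathrm{syn}}\,n\overset{\P}{\longrightarrow}1$ --- i.e., that the synchronized and independent meeting times share the geometric rate $(1+o(1))n^{-1}$.

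Granting all of this, the tail at time $t$ is $(1+o(1))\exp\!\big(-(1+o(1))\,t/n\big)$ w.h.p., uniformly in $(x,y)$, and here the statement of \cref{conj:fish} meets a hard limitation: at the linear scale $t=cn$ this equals $e^{-c}+o(1)$, a strictly positive constant, so the bound $n^{-b}$ with large $b$ is out of reach at \emph{any} fixed multiple of $n$ --- a typical pair needs $\Theta(n)$ synchronized steps merely to meet once. What the scheme does yield, and what I would therefore record as the theorem, is the sharpening in which $cn$ is replaced by $cn\log n$: since $(1-\Lambda^{\mathrm{syn}})^{cn\log n}=n^{-(1+o(1))c}$, for every $b>0$ one may take any $c>b$ and obtain, w.h.p.\ and uniformly in $(x,y)$,
\[
\mathbf{Q}\tonde{W_{\lceil cn\log n\rceil}(x)\neq W_{\lceil cn\log n\rceil}(y)}\le n^{-b}\fstop
\]
In short, the logarithmic correction to the time scale is forced, and the entire difficulty of proving even this corrected form is concentrated in input (b) above.
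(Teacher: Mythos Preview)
The statement you are addressing is a \emph{conjecture}, not a theorem: the paper does not prove it, and in fact offers evidence \emph{against} it. Immediately after stating \cref{conj:fish}, the paper observes that the analogous statement for \emph{independent} walks is false --- by \cref{th:2}, $\Prob_{(x,y)}(\tau_{\rm meet}>cn)>e^{-c}/2$ w.h.p.\ --- and argues via simulations (\cref{fig}) that the coupled and independent meeting times share the same first-order behavior, so that \cref{conj:fish} itself is likely false as stated.

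Your analysis arrives at exactly this conclusion, and by essentially the same reasoning: if the synchronized pair chain obeys a FVTL with rate $\Lambda^{\rm syn}\sim n^{-1}$, then the tail at time $cn$ is $e^{-c}+o(1)$, not $n^{-b}$. Your proposed corrected statement with $cn\log n$ in place of $cn$ is the natural amendment. Where you go further than the paper is in sketching what a proof of the corrected version would require; your identification of input~(b) --- controlling the quasi-stationary measure of the synchronized pair chain, which lacks the product structure $\pi\otimes\pi$ --- as the central obstacle is exactly right, and the paper does not resolve it either. This is precisely why \cref{conj:fish} and \cref{conj:synch} remain open: the FVTL machinery of \cref{sec:preliminaries,sec:meeting-stationarity} relies throughout on the product structure of the independent pair (e.g., in \cref{def:pitilde} and \cref{lemma:A2}), and no substitute is offered for the coupled chain.

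In short: there is no paper proof to compare against; your proposal is not a proof of the conjecture but a correct heuristic argument that it is false, together with a plausible outline of what proving the corrected version would entail. The gap you flag in input~(b) is genuine and is the reason the coupled problem remains open.
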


	The above conjecture can be clearly interpreted  as a meeting problem; however, contrarily to the model we focus on in this paper,  the two random walks in \cref{conj:fish} are \emph{coupled}, i.e.,  they are forced to move following the \emph{same word}. In particular, once such two walks meet, they are doomed to stick together from that moment on. Despite this  difference from our \emph{independent} system,   simulations suggest that the first meeting times of coupled and independent processes share a similar behavior (see \cref{fig}). 
	
	In view of this connection, we conclude that \cref{conj:fish} is false in our setting of independent  walks, as the following consequence of \cref{th:2} shows:
	\begin{corollary}
		For any couple $(x,y)=(x^{(n)},y^{(n)})\in V^2$ of distinct states and any constant $c >0$, w.h.p., 
		\begin{align*}
			\Prob_{(x,y)}\tonde{\tau_{\rm meet}> c n}>\frac{e^{-c}}{2}	\fstop
		\end{align*} 
	\end{corollary}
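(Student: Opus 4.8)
The plan is to derive this corollary as a soft consequence of \cref{th:2} together with the asymptotics $\Lambda n\overset{\P}{\longrightarrow}1$ from \cref{th:1}: no new probabilistic estimate is needed, only an elementary limit computation combined with the continuous mapping theorem for convergence in probability. First I would replace the continuous threshold by an integer one: since $\tau_{\rm meet}$ is $\N$-valued, $\Prob_{(x,y)}(\tau_{\rm meet}>cn)=\Prob_{(x,y)}(\tau_{\rm meet}>t_n)$ with $t_n\eqdef\lfloor cn\rfloor$, a deterministic sequence satisfying $t_n=cn+O(1)$ and $t_n/n\to c$.

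Next I would control $(1-\Lambda)^{t_n}$. From \cref{eq:Lambda} we have $\Lambda\overset{\P}{\longrightarrow}0$ and $n\Lambda\overset{\P}{\longrightarrow}1$; writing $(1-\Lambda)^{t_n}=\exp\big(t_n\log(1-\Lambda)\big)$ and using $\log(1-\Lambda)=-\Lambda+O(\Lambda^2)$ gives $t_n\log(1-\Lambda)=-(n\Lambda)(t_n/n)(1+O(\Lambda))\overset{\P}{\longrightarrow}-c$, hence $(1-\Lambda)^{t_n}\overset{\P}{\longrightarrow}e^{-c}$ by continuity. On the other hand, \cref{th:2} gives $\sup_{t\ge0}\abs{\Prob_{(x,y)}(\tau_{\rm meet}>t)/(1-\Lambda)^t-1}\overset{\P}{\longrightarrow}0$, and evaluating at the particular (possibly $n$-dependent) index $t=t_n$ shows $\Prob_{(x,y)}(\tau_{\rm meet}>t_n)/(1-\Lambda)^{t_n}\overset{\P}{\longrightarrow}1$. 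Multiplying the two relations---both convergences in probability to constants---yields $\Prob_{(x,y)}(\tau_{\rm meet}>cn)\overset{\P}{\longrightarrow}e^{-c}$. Since convergence in probability to the constant $e^{-c}$ entails that $\{\Prob_{(x,y)}(\tau_{\rm meet}>cn)>e^{-c}-\eps\}$ holds w.h.p.\ for every $\eps>0$, taking $\eps=e^{-c}/2$ completes the argument.

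There is no genuine obstacle here: all the work has been front-loaded into \cref{th:1,th:2}. The only two points requiring a line of care are the passage from the real threshold $cn$ to the integer threshold $t_n$---immediate because $\tau_{\rm meet}\in\N$, so that $\{\tau_{\rm meet}>cn\}=\{\tau_{\rm meet}>\lfloor cn\rfloor\}$ exactly---and the justification that the two "$\overset{\P}{\longrightarrow}$" statements may be multiplied, which is standard since the product of sequences of random variables converging in probability to deterministic constants converges in probability to the product of the limits.
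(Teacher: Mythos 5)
Your proposal is correct and follows exactly the route the paper intends: the corollary is stated there without a separate proof, as an immediate consequence of \cref{th:2} combined with $n\Lambda\overset{\P}{\longrightarrow}1$ from \cref{th:1}, which is precisely your computation $(1-\Lambda)^{\lfloor cn\rfloor}\overset{\P}{\longrightarrow}e^{-c}$ followed by the uniform-in-$t$ ratio estimate evaluated at $t=\lfloor cn\rfloor$. The two points you flag (integer threshold, multiplying convergences in probability to constants) are handled correctly, so nothing is missing.
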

	\begin{figure}
		\centering
		\includegraphics[width=7cm]{meeting_independent}\qquad\qquad
		\includegraphics[width=7cm]{meeting_coupled}\\ \vspace{0.2cm}
		\includegraphics[width=7cm]{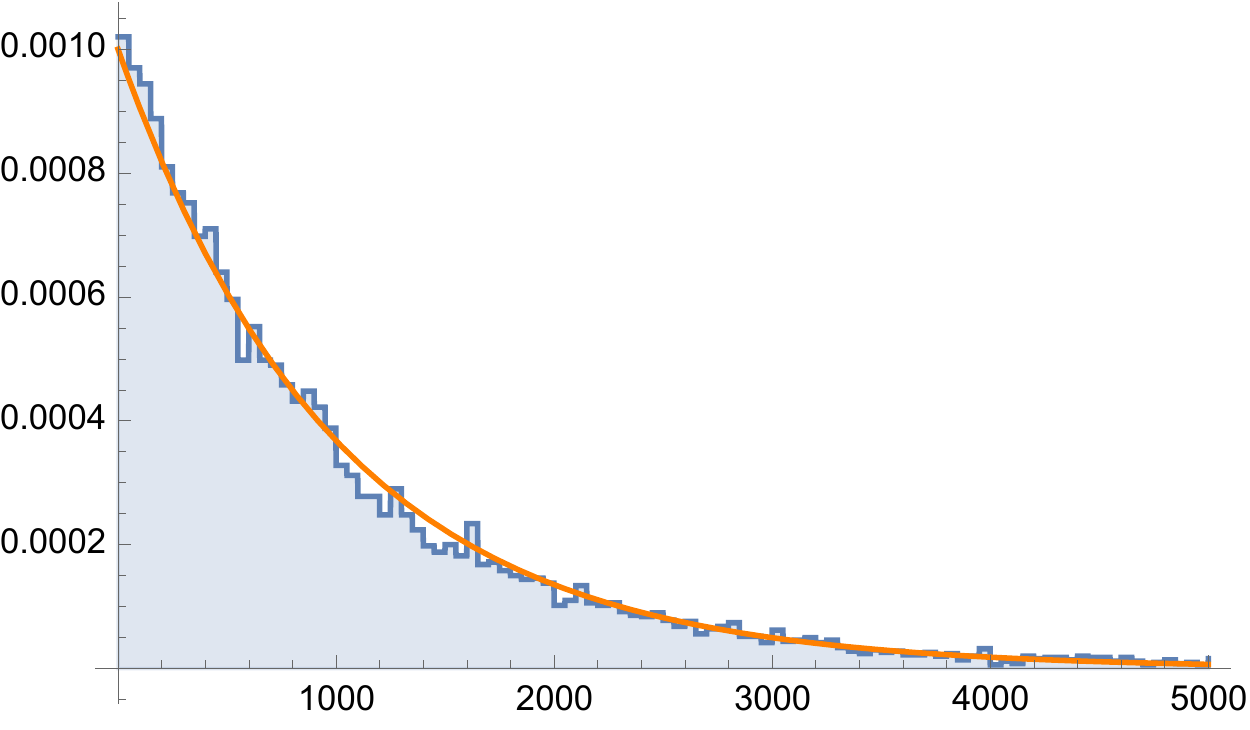}\qquad\qquad
		\includegraphics[width=7cm]{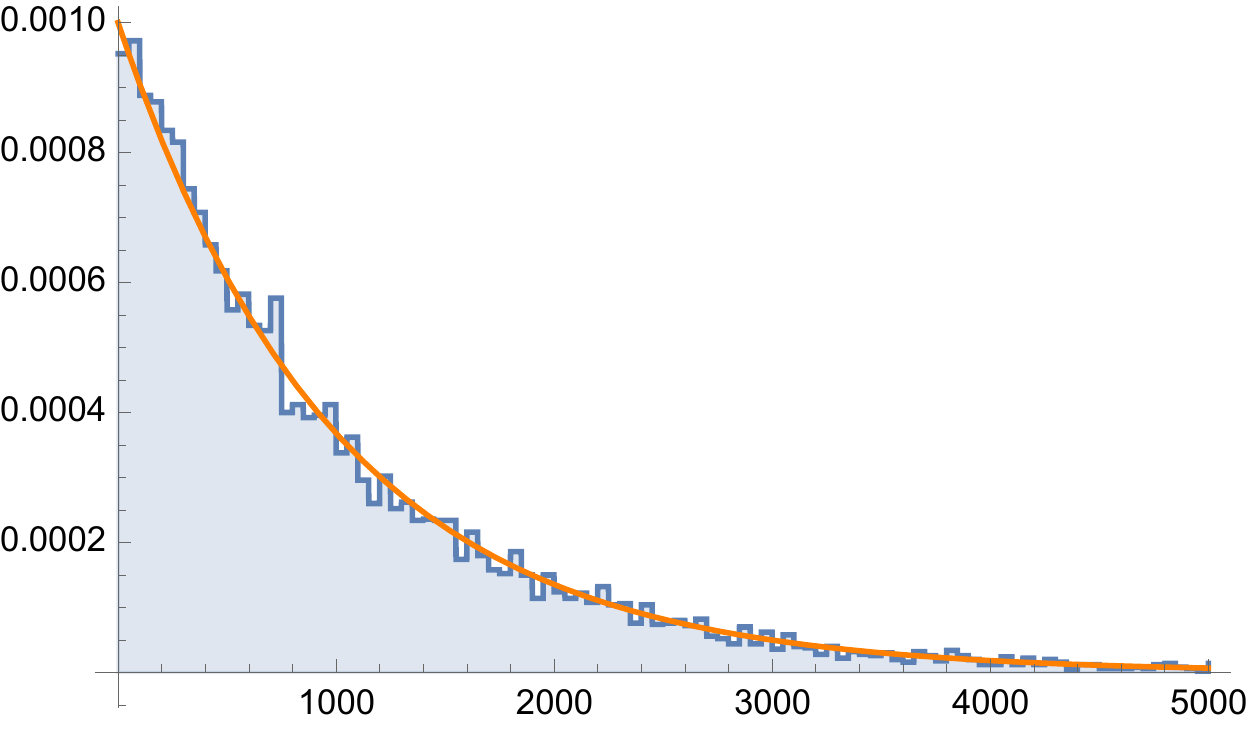}
		\caption{In orange, the PDF of an exponential distribution of mean $n$. In  blue, the empirical PDF of the meeting time of two \emph{independent} (left) and \emph{coupled} (right) random walks starting from two states uniformly at random. The simulations are performed by sampling $10^4$ random DFAs with size $n=1000$. \textcolor{black}{We used $r=2$ for the top row, and $r=20$ for the bottom one.}}\label{fig}
	\end{figure}
	
	\subsubsection{Synchronization of a DFA, \v{C}ern{\`y}'s conjecture, and coalescence}
	Beyond learning theory, DFAs are known to be the object of a long-standing open problem due to \v{C}ern{\`y} \cite{cerny1964}. The so-called \emph{\v{C}ern{\`y}'s conjecture} is related to the notion of \emph{synchronization} of a DFA. A given DFA is  \emph{synchronizable} if there exists a word $w$ such that $w(x)=w(y)$ for every $x,y\in V$; such a word is said to be a synchronizing word for the DFA. Clearly, if a DFA is synchronizable, then there exist  arbitrarily many synchronizing words. The conjecture amounts to the claim that, if a DFA is synchronizable, then the length of the shortest synchronizing word is at most $(n-1)^2$. 
	In that same work \cite{cerny1964}, the author  constructs an example of a DFA having a word  of length exactly $(n-1)^2$ as the shortest synchronizing word. Therefore, if the conjecture were true, then  $(n-1)^2$ would be a sharp bound. 
	Relaxing a bit the problem, one strategy is to look for a high-probability result which ensure the existence of short synchronizing words when the DFA is sampled at random. Along these lines,  Nicaud \cite{nicaud_1,nicaud_2} recently showed that, when the DFA is taken uniformly at random, then there exists a synchronizing word of length $O(n\log^3(n))$ with high probability. More precisely, letting $\tau_{\rm sync}$ denote the smallest $t\in \N$ for which the random word $W_t$ is synchronizing, and  using the notation introduced in \cref{conj:fish}:
	\begin{theorem}[\cite{nicaud_2}]
		W.h.p., there exists a constant $c>0$ such that
		\begin{equation}
			\mathbf{Q}\tonde{ \tau_{\rm sync}\le cn \log^3(n) } \ge r^{-cn\log^3(n)} \fstop
		\end{equation}
	\end{theorem}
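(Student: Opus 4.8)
This theorem is Nicaud's; the approach I would take follows the same high-level strategy.

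\emph{Reduction to a short synchronizing word.} Any extension of a synchronizing word is again synchronizing, so for $T\eqdef\lceil cn\log^3 n\rceil$ the event $\{\tau_{\rm sync}\le T\}$ is exactly the event that $W_T$ is synchronizing; moreover, if the realized DFA admits \emph{some} synchronizing word $w^\star$ of length $\ell^\star\le T$, then
\[
\mathbf{Q}\tonde{\tau_{\rm sync}\le T}\;\ge\;\mathbf{Q}\tonde{w^\star\ \text{is a prefix of}\ W_T}\;=\;r^{-\ell^\star}\;\ge\;r^{-T}\fstop
\]
So it suffices to prove that, w.h.p.\ with respect to the random DFA, a synchronizing word of length at most $cn\log^3 n$ exists for some universal $c>0$.

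\emph{Reformulation as coalescence.} A word $w$ is synchronizing precisely when $w(x)=w(y)$ for every pair $\{x,y\}\subseteq V$ — equivalently, the $n$ \emph{coalescing} walks started from all of $V$ and driven by $w$ have all merged. Hence, for a uniform $W_T$,
\[
\mathbf{Q}\tonde{W_T\ \text{not synchronizing}}\;\le\;\sum_{\{x,y\}\subseteq V}\mathbf{Q}\tonde{W_T(x)\ne W_T(y)}\;=\;\sum_{\{x,y\}\subseteq V}\mathbf{Q}\tonde{\tau^{\mathrm{cpl}}_{x,y}>T}\comma
\]
where $\tau^{\mathrm{cpl}}_{x,y}$ is the meeting time of the two walks from $x$ and $y$ \emph{coupled} to follow the same word. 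Thus everything reduces to a uniform-over-pairs tail bound for $\tau^{\mathrm{cpl}}_{x,y}$, together with a choice of $T$ large enough to absorb the factor $\binom n2$.

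\emph{The crux.} The plan is to establish that, on a high-probability event $\cG$ for the DFA, one has $\sup_{x\ne y}\mathbf{Q}(\tau^{\mathrm{cpl}}_{x,y}>t)\le \exp(-c_1 t/(n\log^2 n))$ for some constant $c_1>0$ and all $t$ — that is, a worst-case tail of the right order up to a polylogarithmic loss. Granting this and taking $T=cn\log^3 n$ with $c$ large, the displayed union bound gives $\mathbf{Q}(W_T\ \text{not synchronizing})\le\binom n2\exp(-c_1 c\log n)\to 0$, so a synchronizing word of length $\le T$ exists on $\cG$, and since $\P(\cG)\to 1$ the reduction above closes the argument. To obtain the coupled tail bound one would proceed as in \cref{sec:meeting-stationarity}: recast the meeting of the two coupled walks as a hitting problem for the coupled chain on $V^2$ with the diagonal collapsed into a single target state, and apply the First Visit Time Lemma to the associated auxiliary chain, after bounding its stationary mass and mixing time. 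The key point is that this coupled chain is \emph{not} the product chain $P^{(2)}$ but the much lumpier chain in which a single letter drives both coordinates; controlling its invariant measure and mixing amounts to analysing a random-walk-like object on a structured random digraph on $\sim n^2$ vertices, and the bound one extracts this way is lossy by two logarithmic factors relative to the sharp geometric-of-mean-$n$ tail that \cref{th:1} proves for the (simpler) \emph{independent} system. These two lost factors, together with the $\log n$ spent on the union bound over $\binom n2$ pairs, are exactly what produce the exponent $3$.

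\emph{Main obstacle, and an alternative.} The entire difficulty lies in the uniform-over-pairs coupled tail estimate: a \emph{typical} pair meets on the scale $n$ (the coupled counterpart of \cref{th:2}, consistent with \cref{fig}), but synchronization is governed by the slowest pair, so a genuine worst-case control is needed, and the coupled chain is far less tractable than the product chain used for \cref{th:1,th:2}. Nicaud's actual argument proceeds instead by a more combinatorial route — repeatedly applying short random words to collapse states of the current image $S_t=W_t(V)$, using that from a set of size $k$ a word of length roughly $n/k^2$ (up to logarithmic corrections) typically creates a collision and that $\sum_k n/k^2=O(n)$ — but the error-budget bookkeeping along that route likewise forces polylogarithmic overhead, and the crude alternative of union-bounding over all $\binom nk$ sets of size $k$ fails. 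Either way, the remaining ingredients (random-mapping tail estimates, the fact that a random DFA is synchronizing w.h.p., and elementary concentration and union bounds) are routine, and the exponent $3$ is almost certainly not optimal: upgrading the coupled worst-case tail to $e^{-(1+o(1))t/n}$ would already bring the bound down to $O(n\log n)$.
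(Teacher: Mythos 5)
Note first that the paper contains no proof of this statement: it is quoted verbatim from Nicaud \cite{nicaud_2} as background, so there is nothing internal to compare against. Judged on its own terms, your proposal has a genuine gap. The two reductions you carry out are fine (monotonicity of synchronization under appending letters gives $\mathbf{Q}(\tau_{\rm sync}\le T)\ge r^{-T}$ once \emph{some} synchronizing word of length $\le T$ exists, and the union bound over pairs is valid), but the entire content of the theorem is then pushed into the asserted ``crux'': a worst-case bound $\sup_{x\neq y}\mathbf{Q}(\tau^{\mathrm{cpl}}_{x,y}>t)\le \exp(-c_1 t/(n\log^2 n))$ on a high-probability event. You do not prove this, and the route you indicate for it --- ``proceed as in \cref{sec:meeting-stationarity}: collapse the diagonal and apply the FVTL to the associated auxiliary chain'' --- does not transfer from the independent to the coupled setting. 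For the coupled pair chain the diagonal is \emph{absorbing}, so the target has no meaningful stationary mass for the unmodified chain; and, more importantly, the whole analysis of \cref{sec:meeting-stationarity} rests on the fact that off the diagonal the auxiliary chain coincides with the product $P\otimes P$, whose mixing and stationary measure are inherited from single-walk results (\cref{th:single_rw}). When a single letter drives both coordinates, no such reduction to single-walk estimates is available, and controlling the coupled pair dynamics is precisely the open difficulty the paper emphasizes in \cref{sec:motivation}: \cref{conj:fish} is exactly a statement about these coupled meeting times, it remains open, and the paper only resolves (negatively) its analogue for \emph{independent} walks via \cref{th:1,th:2}. So your plan quietly assumes a strengthened, worst-case form of the very estimate that is conjectural. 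The further claims that this method would be ``lossy by two logarithmic factors'' and that this explains the exponent $3$ are speculation, not derivation.

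You do correctly note that Nicaud's actual proof is combinatorial (iteratively collapsing the image $W_t(V)$ with short random words), which is indeed a different and workable route; but your proposal does not carry that argument out either --- the quantitative collision estimates for shrinking images and the associated error bookkeeping are exactly where the work lies. As written, the proposal is a correct framing plus an unproven central lemma, so it does not constitute a proof of the cited theorem.
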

	Roughly speaking, this result implies that  \v{C}ern{\`y}'s conjecture holds for most large automata, and that the upper bound $(n-1)^2$ is far from being tight for a typical DFA. Nonetheless, Nicaud's result does not provide an answer to the question \emph{``how rare are such short synchronizing words?''}. More precisely, taking a random word $W_t$ of length $t>0$, and letting $p_t$ be the probability that $W_t$ is synchronizing for a quenched realization of the DFA, what is the behavior of the random sequence $(p_t)_{t\ge 0}$ for large DFAs? 
	
	As for the meeting problem described in \cref{sec:learning}, this synchronization problem may be approximated by means  of a system of \emph{coalescing random walks}, which we now describe. Let $n$   walks start from all distinct vertices,  let them evolve synchronously but \emph{independently} (i.e., each following an independent word), and when two or more particles meet, they merge together  and evolve as a single walk (i.e., they follow the same word only after their meeting). We let $\Prob_{\rm coal}$ denote the law of this Markov chain, and define the \emph{coalescing time} $\tau_{\rm coal}$  as the first time in which only one of the $n$ walks is left. By \cref{th:1} and a union bound, it is immediate to check that
	\begin{align}\label{eq:coal-nlogn}
		\Prob_{\rm coal}\tonde{\tau_{\rm coal}>\(1+\eps\)n \log n} \overset{\P}\longrightarrow 0\comma\qquad \eps>0\fstop
	\end{align}
	Actually, 	since the single random walk on a random DFA satisfies w.h.p.\ the mean field conditions in \cite{oliveira_mean2013},  Theorem 1.2 therein and \cref{pr:fvtltilde} below (that is, essentially the claim in \cref{th:2}, but with the two walks starting independently from stationarity) prescribe\footnote{Note that the results in \cite{oliveira_mean2013} are stated for continuous-time walks.} the limit distribution of $\tau_{\rm coal}$: letting $Z_2, Z_3, \ldots, Z_i,\ldots$ be jointly independent random variables such that $Z_i \sim {\rm Exp}(\binom{i}{2})$, 
	\begin{align}\label{eq:mean-field-behavior}
		d_W\tonde{\frac{\tau_{\rm coal}}{n},\sum_{i=2}^\infty Z_i}\overset{\P}\longrightarrow 0\comma\qquad  
	\end{align}
	where $d_W\tonde{\emparg,\emparg}$ denotes the usual $L^1$-Wasserstein distance. In particular,   \cref{eq:mean-field-behavior} implies
	\begin{align}
		\frac{\mathbf{E}_{\rm coal}\quadre{\tau_{\rm coal}}}{n}\overset{\P}\longrightarrow 2\comma
	\end{align}
	which, by Markov inequality, yields the following  strengthening of \cref{eq:coal-nlogn}: for every $\varepsilon>0$, there exists $c=c_\varepsilon>0$ such that, w.h.p.,
	\begin{align}\label{eq:coal-n}
		\Prob_{\rm coal}\tonde{\tau_{\rm coal}>c n } < \varepsilon\fstop
	\end{align}
	Also in this case, simulations suggest that the two models (synchronization \emph{vs.}\ coalescence)  roughly share the same behavior (see \cref{fig2}). For this reason, it is natural to believe to the following:
	\begin{conjecture}\label{conj:synch}
		Using the notation introduced in \cref{conj:fish},
		\begin{equation}
			\frac{\mathbf{E}_{\mathbf{Q}}\[ \tau_{\rm sync} \]}{n} \overset{\P}{\longrightarrow}2\fstop
		\end{equation}
		Therefore, for every $\varepsilon>0$, there exists $c=c_\varepsilon>0$ such that, w.h.p.,
		\begin{equation}
			\mathbf{Q}\( \tau_{\rm sync}> c n \) \le \varepsilon \fstop
		\end{equation}
	\end{conjecture}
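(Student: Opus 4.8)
\medskip
\noindent\emph{Toward \cref{conj:synch}.}
The plan is to run the architecture of the present paper one level up, from two walks to $n$. The elementary remark is that $t\mapsto\abs{W_t(V)}$ is non-increasing --- applying one more letter can only identify images, never split them --- so $\tau_{\rm sync}$ is exactly the coalescence time of the \emph{coupled} system of $n$ walks that all follow the \emph{same} random word (and hence stay merged once they meet). Since a uniform random DFA with $r\ge 2$ is synchronizable w.h.p.\ (which already follows from the bound of \cite{nicaud_2} quoted above), $\tau_{\rm sync}<\infty$ w.h.p.\ and the quantity is well posed. Writing $\tau_{\rm sync}=\sum_{k=2}^{n}T_k$, with $T_k$ the time during which exactly $k$ distinct walks are present, the mean-field heuristic predicts $\Expect_{\mathbf Q}\quadre{T_k}\approx n/\binom k2$; since $\sum_{k\ge2}1/\binom k2=2$, this gives the claim. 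In the spirit of how \cref{coro:mean-meeting} follows from \cref{th:2}, the cleanest route is to prove the distributional statement that $\tau_{\rm sync}/n$ converges in $L^1$-Wasserstein distance to $\sum_{i\ge2}Z_i$ --- the exact analogue of \cref{eq:mean-field-behavior}, now for \emph{coupled} walks, to which the framework of \cite{oliveira_mean2013} does not apply directly since it presupposes independence --- and then read off the mean.

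The core technical ingredient is a coupled analogue of \cref{th:1,th:2}: one must analyze the product chain $\widehat P$ on $V^2$ in which \emph{both} coordinates are updated by the \emph{same} uniformly random letter, in place of $P^{(2)}=P\otimes P$. Exactly as in \cref{sec:meeting-stationarity}, one recasts the meeting of two coupled walks as a hitting problem for the auxiliary chain obtained from $\widehat P$ by collapsing the diagonal into a single state in the sense of \cite{manzo_quattropani_scoppola_2021}, and applies the FVTL. The genuinely new difficulty is that the invariant measure of $\widehat P$ does \emph{not} factorize, since the shared letter correlates the two coordinates; the bounds on the stationary distribution, on the return/escape probabilities of the collapsed state, and on the mixing time of the auxiliary chain therefore have to be redone. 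The structural input that should make this possible is a \emph{local decoupling} phenomenon: from a typical pair $(x,y)$ with $x\ne y$, two walks driven by the same word explore disjoint, tree-like neighborhoods for $\Theta(\log n)$ steps, so on that timescale $\widehat P$ is indistinguishable from $P\otimes P$; this should give mixing time $O(\log n)$ and a collision rate with the diagonal still equal to $(1+o(1))/n$, hence a coupled two-walk meeting time stochastically dominated by --- and, for typical pairs, equal to --- a geometric variable of mean $(1+o(1))n$, matching the simulations in \cref{fig}.

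Granting this, the upper bound $\Expect_{\mathbf Q}\quadre{\tau_{\rm sync}}\le(2+o(1))n$ follows the template used to deduce \cref{coro:mean-meeting}: from any reachable $k$-walk configuration one shows that the $\binom k2$ pairwise collision events each occur at rate $(1+o(1))/n$, and together at rate $(1+o(1))\binom k2/n$ as long as $\binom k2=o(n)$, so that $\Expect_{\mathbf Q}\quadre{T_k}\le(1+o(1))n/\binom k2$ uniformly for $k$ below a cutoff $K=K(n)$ with $K\to\infty$ and $K=o(\sqrt n)$, while the stages with $k>K$ contribute $o(n)$ by cruder, Nicaud-type bounds. The matching lower bound asks, symmetrically, that when exactly $k$ walks survive the configuration is w.h.p.\ ``typical enough'' that $\Expect_{\mathbf Q}\quadre{T_k}\ge(1-o(1))n/\binom k2$; summing the finitely many dominant stages then gives $\Expect_{\mathbf Q}\quadre{\tau_{\rm sync}}\ge(2-o(1))n$.

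In my view the principal obstacle is the analysis of the collapsed coupled product chain $\widehat P$: one must show that the local decoupling persists long enough --- equivalently, that the shared driving letter neither helps nor hinders meeting on the $\Theta(n)$ timescale --- and, most delicately, obtain sharp control of the \emph{non-product} invariant measure of $\widehat P$ on the off-diagonal block, which is precisely the data the FVTL consumes. A secondary but genuine difficulty is the conditioning bias in the lower bound: one needs that conditioning on ``$k$ walks have survived so far'' does not steer them into an atypical, slow-to-merge configuration, which seems to require a second-moment or entropy estimate over the configurations reachable by the $k$-walk coupled dynamics. One could instead try to couple the coupled and independent coalescing systems so as to match $\tau_{\rm sync}$ with $\tau_{\rm coal}$ up to $o(n)$, but the two systems differ in an essential way --- shared versus independent words before merging --- and I would expect such a coupling to be no easier than the direct analysis of $\widehat P$.
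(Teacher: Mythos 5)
The statement you are addressing is \cref{conj:synch}, which the paper does \emph{not} prove: it is stated as an open conjecture, supported only by simulations (\cref{fig2}) and by the analogy with the \emph{independent} coalescing system, for which \cref{eq:mean-field-behavior} follows from the mean-field theorem of \cite{oliveira_mean2013} together with \cref{pr:fvtltilde}. Your text is accordingly a research outline rather than a proof, and it leaves exactly the steps that would constitute one as announced tasks: the coupled two-walk meeting estimate (the analogue of \cref{th:1,th:2} for the chain driven by a shared letter), the control of the relevant stationary-type measure, and the passage from pairwise meeting rates to the statement $\Expect_{\mathbf Q}[T_k]=(1\pm o(1))\,n/\binom{k}{2}$, which for independent walks is supplied by \cite{oliveira_mean2013} but has no coupled counterpart; the conditioning bias you flag in the lower bound is likewise left open. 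So the proposal does not establish the statement, which is consistent with its status in the paper.

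One concrete structural point deserves correction, because it affects the feasibility of your main device. You propose to collapse the diagonal of the coupled product chain $\widehat P$ and apply the FVTL, remarking that ``the invariant measure of $\widehat P$ does not factorize.'' The situation is worse: under $\widehat P$ both coordinates follow the \emph{same} letter, so the diagonal is \emph{absorbing}, and (on the w.h.p.\ event that the DFA is synchronizing) every stationary measure of $\widehat P$ is supported on $\Delta$; there is no nontrivial off-diagonal invariant mass at all. Consequently the paper's auxiliary-chain construction does not transfer: in \cref{sec:auxiliary-chain} the re-injection from $\Delta$ proportional to $\pi(z)^2$ is precisely what makes the explicit product formula \cref{def:pitilde} stationary for $\tilde P$, and it is this formula that feeds \cref{hp:pimin,hp:pimax,hp:pidelta} and hence the hypotheses \cref{eq:small-mixing,eq:hp} of \cref{fvtl}. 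For the coupled chain any re-injection is artificial, its stationary measure has no closed form, and the natural replacement is a quasi-stationary analysis of the killed chain $[\widehat P]_\Delta$ --- a genuinely different and harder problem than the one solved in \cref{sec:meeting-stationarity}. Your local-decoupling heuristic (tree-like, disjoint exploration for $\Theta(\log n)$ steps) is plausible as a starting point, but as written the plan neither resolves this obstruction nor the $n$-particle reduction, so the conjecture remains open after your argument.
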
 
	Notice that if the latter conjecture held, then it would also provide a sharpening of the results in \cite{nicaud_2}, by proving that there exist synchronizing words of length $O(n)$, and actually most words of length $\omega(n)$ are synchronizing.

	\begin{figure}
		\centering
		\includegraphics[width=7cm]{coalescing_independent}\qquad\qquad
		\includegraphics[width=7cm]{coalescing_coupled}\\ \vspace{0.2cm}
		\includegraphics[width=7cm]{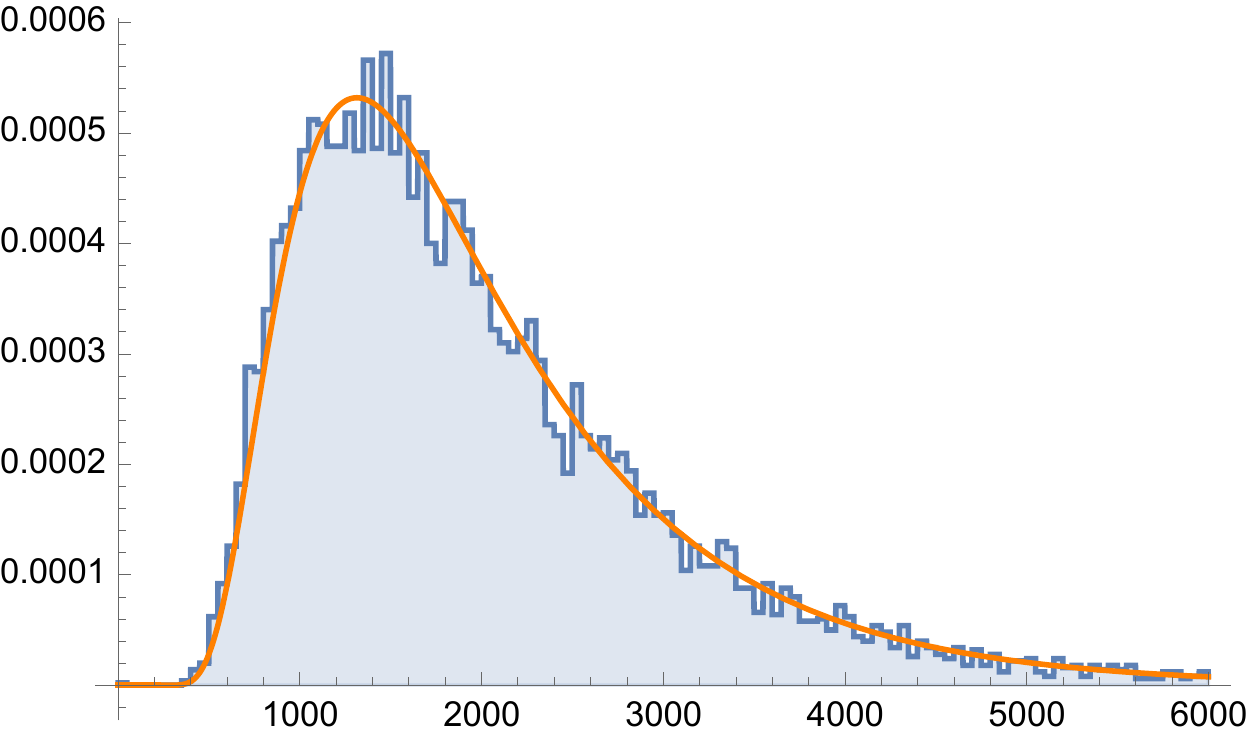}\qquad\qquad
		\includegraphics[width=7cm]{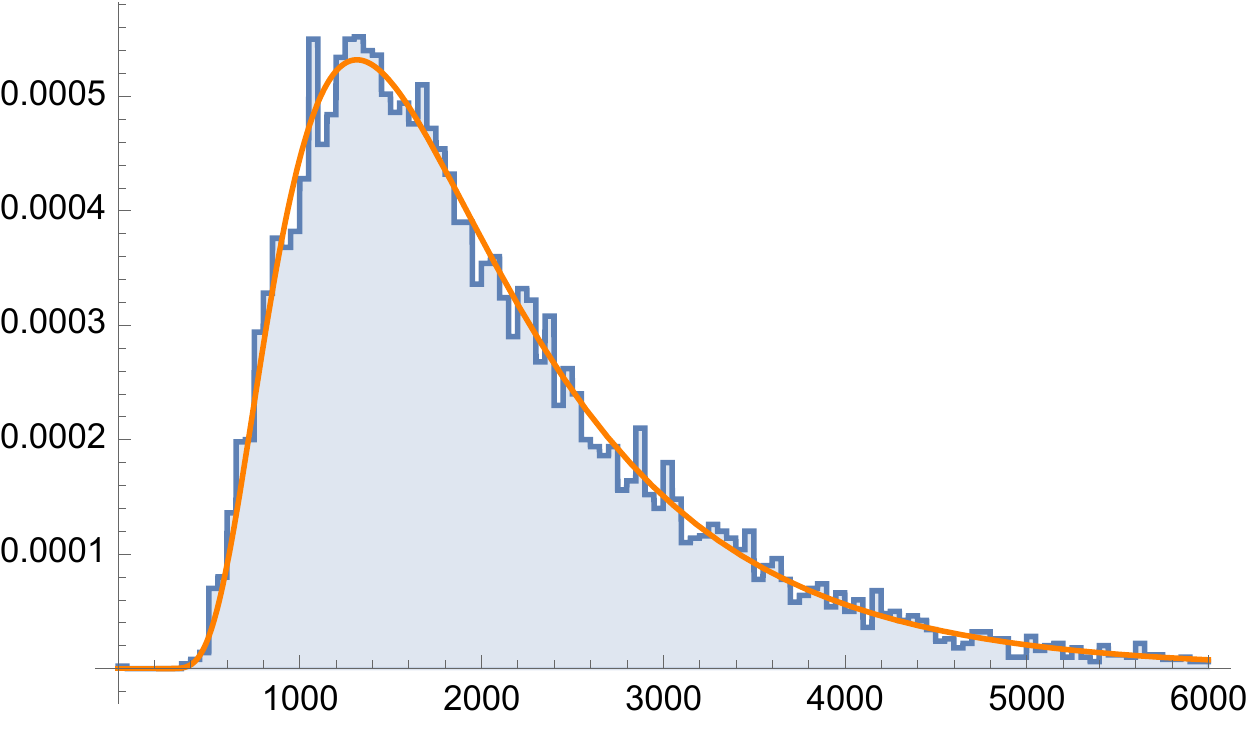}
		\caption{In orange, the PDF of the distribution of the random variable $n \sum_{i=2}^\infty Z_i$, where $Z_i$ is given as in \cref{eq:mean-field-behavior}. In  blue, the empirical PDF of the coalescence time $\tau_{\rm coal}$ (left) and of the synchronization time $\tau_{\rm sync}$  (right). The simulations are performed by sampling $10^4$ random DFAs with size $n=1000$. \textcolor{black}{We used $r=2$ for the first row and $r=20$ for the bottom one.}}
		\label{fig2}
	\end{figure}

	\begin{remark} In the context of random DFA, the condition in \cref{eq:V-C-f} that the $f$'s are one-to-one is often not required (see, e.g., \cite{FR2017,nicaud_2,addario-berry_diameter2020}). (This  condition translates into the constraint  that a random DFA does not display multiple edges with the same origin-destination pair.)
		We impose this condition for the mere scope of importing without  changes all the results in \cite{bordenave_cutoff2019,caputo_quattropani_2021_SPA}, which are based on this assumption.
		
		Nonetheless, it is immediate to check that, even when this constraint is neglected, the number of such multiple edges stays bounded with high probability.
		Given this, it should not be too hard to extend the results therein to the unconstrained setting. Nevertheless, this attempt is out of the scope of the present paper.
	\end{remark}

	\section{Auxiliary chain and First Visit Time Lemma}\label{sec:preliminaries}
	As in other related works (e.g., \cite{cooper_frieze_radzik_2009, oliveira_mean2013}), our strategy of proof is based on interpreting the meeting time  for two walks  as the hitting time of the diagonal
	\begin{align}
		\Delta\eqdef \{(x,x): x\in V\}
	\end{align}
	for the \emph{product chain} $\mathbf{X}^{(2)}_t=(X^{(1)}_t,X^{(2)}_t)$. Clearly, such a  hitting time is independent on transition probabilities \emph{from} the diagonal, therefore in this analysis the product chain may be replaced by any other chain behaving as $\mathbf{X}^{(2)}_t$ until the first hitting of $\Delta$. 
	
	In what follows, we adopt this idea, introducing an effective auxiliary process (\cref{sec:auxiliary-chain}) for which the hypothesis of the First Visit Time Lemma (\cref{fvtl} in \cref{sec:fvtl}) are shown to hold (\cref{lemma:hpfvtl} in \cref{sec:hpfvtl}).
	
	\subsection{Auxiliary chain $\Xi_t$}\label{sec:auxiliary-chain}
	Fix a realization of the random DFA $G$, and fix a stationary measure $\pi$ for the associated chain.  In this setting,  we introduce an \emph{auxiliary chain}	 $(\Xi_t)_{t\in \N}$ on the state space
	\begin{align}\label{def:Vtilde}
		\tilde V\eqdef V^2_{\not=}\sqcup \{\Delta\}\eqdef \left\{(x,x')\in V^2: x\neq x'\right\} \sqcup \{\Delta\}\comma
	\end{align} 
	namely the set $V^2$ in which elements in $\Delta$ are identified, \textcolor{black}{and now $\Delta$ is considered as a state for this new chain\footnote{We emphasize that, when working with $V^2$, $\Delta$ is a subset of states; when working with $\tilde V$, $\Delta$ is considered as a  state.}.}
	\textcolor{black}{In words, such a Markov chain  has the same behavior as that of two independent walks when the two walks are off the diagonal. When the two walks reach the diagonal $\Delta$, then they move independently out of  the same vertex $z \in V$ sampled with probability proportional to $\pi(z)^2$.} 
	More precisely, the law of such a chain (given the underlying DFA $G$), which will be referred to as $(\tilde \Prob_\xi)_{\xi\in \tilde V}$, is the one induced by the transition matrix   $\tilde P$ given by (here, $(x,x'), (y,y')\in V^2_{\not=}$)
	\begin{align*}
		\tilde P(\xi,\zeta)\eqdef \begin{cases}
			P(x,y)P(x',y') &\text{if}\ \xi=(x,x')\comma \zeta	=(y,y')\\
			\sum_{z\in V} P(x,z)P(x',z) &\text{if}\ \xi=(x,x')\comma \zeta = \Delta\\
			\sum_{z\in V}\frac{\pi(z)^2}{\sum_{w\in V}\pi(w)^2} P(z,y)P(z,y') &\text{if}\ \xi= \Delta\comma \zeta=(y,y')\\
			\frac{1}{r} &\text{if}\ \xi, \zeta\in \Delta\fstop
		\end{cases}
	\end{align*}
	As already observed in \cite[\S2.3]{manzo_quattropani_scoppola_2021}, whenever the chain $P$ admits $\pi$ as its unique stationary measure, then 
	\begin{align}\label{def:pitilde}
		\tilde\pi(\xi)\eqdef \begin{cases}
			\pi(x)\pi(x') &\text{if}\ \xi=(x,x')\\
			\sum_{z\in V}\pi(z)^2 &\text{if}\ \xi=\Delta\fstop
		\end{cases}	
	\end{align} is the unique stationary  measure  for $\tilde P$.

	\subsection{First Visit Time Lemma}\label{sec:fvtl}
	Given a growing sequence of Markov chains, the so-called \emph{First Visit Time Lemma} (FVTL) \cite{cooper_frieze_2004} (see also \cite{manzo_quattropani_scoppola_2021}) is a powerful tool for the asymptotic analysis of  hitting times when starting from stationarity.  Originally motivated by the study of cover times of random walks on random graphs, Cooper and Frieze developed this criterion and successfully applied it to several problems (see, e.g., \cite{cooper_frieze_2004,cooper_frieze_2005,cooper_frieze_2007,cooper_frieze_2008}). More recently,  the authors in \cite{manzo_quattropani_scoppola_2021} provided a new proof of such a lemma, linking this result to the theory of quasi-stationary distributions and metastability for Markov chains, \textcolor{black}{in the spirit of previous works from the '80, see, e.g., \cite{aldous82}}. 
	
	Before presenting a detailed version of the theorem, we briefly explain in words its content. To this purpose, consider a (discrete-time) ergodic Markov chain  on a finite set $[N]$, with transition matrix $Q$, and with stationary measure $\mu$; further, consider the corresponding mixing times, i.e.,   
	\begin{color}{black}
	\begin{equation}\label{eq:def-tmix} 
		t_{\rm mix}=t_{\rm mix}(Q)\eqdef\inf\left\{ t\in \N\:\bigg\rvert\:\max_{z\in[N]} \left\|Q^t(z,\emparg)-\mu\right\|_{\rm TV}\le \frac{1}{2e} \right\}\comma
	\end{equation}
where $\left\|\nu_1-\nu_2\right\|_{\rm TV}$ denotes the total-variation distance between two probability measures $\nu_1$ and $\nu_2$ defined on the same space.
Roughly speaking,	the  FVTL asserts   that for a growing (i.e., $N\to \infty$) sequence of  Markov chains in which the mixing time is sufficiently small compared to the stationary measure of a target state, then the hitting times of  such a target state is  geometrically distributed  when starting from stationarity.
\end{color}
	\begin{color}{black}
		\begin{theorem}[FVTL]\label{fvtl}
			Consider a sequence of ergodic Markov chains with state spaces $[N]$, transition matrices $Q=Q_N$, and  unique stationary measures $\mu=\mu_N$. Further, consider a sequence of target states $\partial=\partial_N \in {\rm supp}(\mu)\subseteq[N]$
			and assume that
			\begin{equation}\label{eq:small-mixing}
				\mu(\partial)\,t_{\rm mix}\, \log\(\frac{1}{\min_{z\in {\rm supp}(\mu)}\mu(z))}\)\underset{N\to \infty}\longrightarrow 0\fstop
			\end{equation}
			Then, there exists some $\lambda=\lambda_N\in(0,1)$ such that 
			\begin{equation}\label{eq:geometric0}
				\sup_{t\ge 0}\left|\frac{\Pr(\tau_\partial>t \:|\: X_0\sim \mu)}{(1-\lambda)^t}-1 \right|\underset{N\to \infty}\longrightarrow 0\fstop
			\end{equation}
			Moreover, for any   sequence $T=T_N$ satisfying
			\begin{equation}\label{eq:hp}
				T\ge 2t_{\rm mix}\, \log\(\frac{1}{\min_{z\in {\rm supp}(\mu)}\mu(z)}\)\qquad\text{and}\qquad	\mu(\partial)\,  T \underset{N\to \infty}\longrightarrow 0\comma
			\end{equation}
			we have
			\begin{align}\label{eq:lambda}
				\frac{\lambda}{\mu(\partial)/R}\underset{N\to \infty}\longrightarrow 1\comma\qquad \text{with}\  R=R_{N,T}\eqdef\sum_{t=0}^T Q^t(\partial,\partial)\fstop
			\end{align}
		
		\end{theorem}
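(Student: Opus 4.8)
The plan is to reduce the law of $\tau_\partial$ started from $\mu$ to the return structure of the chain at the single state $\partial$, and then to cash in the hypothesis~\eqref{eq:small-mixing}---which says that $\partial$ is hit on a timescale far longer than the one on which $Q$ forgets its starting point---to show that visits to $\partial$ organise into well-separated, nearly independent clusters, whence the geometric law. The first step is a renewal identity: conditioning on the first visit to $\partial$ and using stationarity of $\mu$ gives $\mu(\partial)=\sum_{s=0}^{t}\Pr(\tau_\partial=s\mid X_0\sim\mu)\,Q^{t-s}(\partial,\partial)$ for all $t$; passing to generating functions, using $\sum_{t\ge 0}Q^t(\partial,\partial)z^t=\bigl(1-\sum_{t\ge 1}\Pr_\partial(\tau_\partial^+=t)z^t\bigr)^{-1}$, and extracting coefficients yields the clean identity
\begin{equation*}
	\Pr(\tau_\partial>t\mid X_0\sim\mu)\;=\;\mu(\partial)\sum_{s>t}\Pr_\partial(\tau_\partial^+>s)\comma
\end{equation*}
so that everything is transferred to the tail of the return time $\tau_\partial^+$ of the chain to $\partial$, started at $\partial$. (Equivalently, and this is the quasi-stationary reading of the FVTL, $\Pr(\tau_\partial>t\mid X_0\sim\mu)=\sum_{x\neq\partial}\mu(x)\,(\widehat Q^{\,t}\mathbf 1)(x)$, where $\widehat Q$ is the substochastic kernel obtained from $Q$ by deleting the row and column of $\partial$; one then wants that the Perron eigenvalue $1-\lambda$ of $\widehat Q$ dominates the rest of its spectrum and that $\mu$ restricted to $[N]\setminus\{\partial\}$ is close to the corresponding quasi-stationary distribution, the latter holding precisely because $\mu(\partial)$ is tiny.)

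Next I would feed in the mixing hypothesis. Iterating~\eqref{eq:def-tmix} gives $\left\|Q^{kt_{\rm mix}}(z,\argdot)-\mu\right\|_{\rm TV}\le(2e)^{-k}$ uniformly in $z$, so after $m_\star\eqdef\bigl\lceil t_{\rm mix}\log\!\bigl(1/\min_{z\in\mathrm{supp}(\mu)}\mu(z)\bigr)\bigr\rceil$ steps the law of the chain is within $\min_{z\in\mathrm{supp}(\mu)}\mu(z)$ of $\mu$ in total variation, uniformly in the start; in particular $\left|Q^t(\partial,\partial)-\mu(\partial)\right|$ is negligible once $t\gtrsim m_\star$. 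Cutting a trajectory into consecutive windows of length $m_\star$ and applying the Markov property at the window endpoints, the conditional probability of visiting $\partial$ inside a window, given that $\partial$ has not been seen just before, is $(1+o(1))\,\mu(\partial)\,m_\star/R_{N,T}$, with $R_{N,T}$ as in~\eqref{eq:lambda} (to leading order, the expected number of visits to $\partial$ within a mixing period started from $\partial$); by~\eqref{eq:small-mixing} this quantity is $o(1)$, so successive windows behave like nearly independent rare trials, and summing over them one gets $\Pr_\partial(\tau_\partial^+>s)=(1+o(1))\,\kappa\,(1-\lambda)^{s}$ for $s\gtrsim m_\star$, for some $\lambda=\lambda_N\in(0,1)$ and an order-one ``escape'' constant $\kappa$. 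Plugging this into the identity above and summing the geometric series gives~\eqref{eq:geometric0}---$\kappa$ being pinned down by the normalisation $\Pr(\tau_\partial>0\mid X_0\sim\mu)=1-\mu(\partial)\to 1$---while the pre-mixing range $t=O(m_\star)$ is treated separately, using only that the expected number of visits to $\partial$ up to time $O(m_\star)$ is $O(\mu(\partial)\,m_\star)=o(1)$, so that $\Pr(\tau_\partial>t\mid X_0\sim\mu)=1-o(1)$ there.

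To identify $\lambda$ and obtain~\eqref{eq:lambda}, I would combine three facts. First, \eqref{eq:geometric0} gives $\Expect[\tau_\partial\mid X_0\sim\mu]=\sum_{t\ge 0}\Pr(\tau_\partial>t\mid X_0\sim\mu)=(1+o(1))/\lambda$, the pre-mixing terms contributing only $O(m_\star)=o(1/\lambda)$ since $\mu(\partial)\,m_\star\to 0$ and $R_{N,T}\ge 1$. Second, the classical fundamental-matrix identity for ergodic chains gives $\Expect[\tau_\partial\mid X_0\sim\mu]=\mu(\partial)^{-1}S$ with $S\eqdef\sum_{t\ge 0}\bigl(Q^t(\partial,\partial)-\mu(\partial)\bigr)$. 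Third, for any $T$ as in~\eqref{eq:hp}, the bound $\left|Q^t(\partial,\partial)-\mu(\partial)\right|\le(2e)^{-\lfloor t/t_{\rm mix}\rfloor}$ yields
\begin{equation*}
	\bigl|R_{N,T}-S\bigr|\;\le\;(T+1)\mu(\partial)+O(t_{\rm mix})\,(2e)^{-T/t_{\rm mix}}\;=\;o(1)\comma
\end{equation*}
because $(T+1)\mu(\partial)\to 0$ and $T/t_{\rm mix}\ge 2\log\!\bigl(1/\min_{z\in\mathrm{supp}(\mu)}\mu(z)\bigr)\to\infty$; since $R_{N,T}=\sum_{t=0}^{T}Q^t(\partial,\partial)\ge Q^0(\partial,\partial)=1$, this also forces $S=(1+o(1))R_{N,T}$. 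Chaining the three relations, $1/\lambda=(1+o(1))\,S/\mu(\partial)=(1+o(1))\,R_{N,T}/\mu(\partial)$, i.e.\ $\lambda\big/\bigl(\mu(\partial)/R_{N,T}\bigr)\to 1$, which is~\eqref{eq:lambda}.

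The crux is the middle step: turning the clustering picture into an estimate that is $o(1)$ \emph{uniformly in $t$} and in the sequence of chains, under nothing beyond~\eqref{eq:small-mixing}. Two features make this genuinely delicate. The chains are not assumed reversible, so one cannot diagonalise $\widehat Q$ in an orthonormal basis; one must instead control the (a priori defective) Perron--Frobenius structure of $\widehat Q$---producing the separation between its Perron eigenvalue $1-\lambda$ and the rest of the spectrum that is dictated by $t_{\rm mix}$---or, equivalently, locate the dominant singularity of the generating function $\sum_{t}\Pr(\tau_\partial=t\mid X_0\sim\mu)\,z^t$ and bound it elsewhere by a contour argument in the spirit of Cooper--Frieze. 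And the pre-mixing window $t=O\!\bigl(t_{\rm mix}\log(1/\min_{z}\mu(z))\bigr)$, where the geometric approximation carries no content, has to be disposed of by hand. Everything else is bookkeeping around the two robust inputs: the renewal identity of the first paragraph and the iterated total-variation bound.
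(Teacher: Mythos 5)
Your first and third paragraphs are essentially sound and in fact parallel part of the paper's argument: the renewal identity $\Pr(\tau_\partial=t\mid X_0\sim\mu)=\mu(\partial)\,\Pr_\partial(\tau^+_\partial>t)$ is correct, and your identification of $\lambda$ via $\E[\tau_\partial\mid X_0\sim\mu]=Z(\partial,\partial)/\mu(\partial)$ together with the bound $|R_{N,T}-Z(\partial,\partial)|\le (T+1)\mu(\partial)+O(t_{\rm mix})e^{-\Omega(T/t_{\rm mix})}=o(1)$ is exactly the content of the paper's Lemma \ref{lemma:abdu}. But the theorem's actual substance is \cref{eq:geometric0}, i.e.\ a \emph{multiplicative} $(1+o(1))$ approximation of the tail by a geometric, uniformly in $t\ge 0$, and this is precisely the step you do not prove. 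Your second paragraph replaces it by the heuristic that windows of length $m_\star$ are ``nearly independent rare trials'' with per-window hazard $(1+o(1))\mu(\partial)m_\star/R$; making that rigorous requires controlling the law of the chain \emph{conditioned on survival} (the conditioning can distort the distribution arbitrarily badly a priori, and without reversibility one cannot get the needed separation between the Perron eigenvalue of the substochastic kernel $\widehat Q$ and the rest of its spectrum from $t_{\rm mix}$ by elementary means). You acknowledge this yourself in the final paragraph — calling it ``the crux'' and ``genuinely delicate'' and merely listing possible techniques (Perron--Frobenius control of $\widehat Q$, a Cooper--Frieze-type contour argument) without carrying any of them out. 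As written, the proposal therefore proves \cref{eq:lambda} conditionally on \cref{eq:geometric0}, but not \cref{eq:geometric0} itself. There is also a minor circularity in your treatment of the pre-mixing range: to conclude the ratio is $1+o(1)$ for $t=O(m_\star)$ you need $\lambda m_\star\to 0$, which you only obtain in the third paragraph as a consequence of \cref{eq:geometric0}; this is repairable (the window heuristic would give $\lambda\lesssim\mu(\partial)$ directly), but only once the main gap is filled.

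For comparison, the paper fills exactly this hole by importing Aldous's quasi-stationarity result (Proposition \ref{lemma-aldous}): there is a distribution $\mu_\star$ on $[N]\setminus\{\partial\}$ with \emph{exactly} geometric exit tails $\Q_{\mu_\star}(\tau_\partial>t)=(1-\lambda_\star)^t$, together with the quantitative bound \cref{eq:bound-aldous} comparing $\E_{\mu_\star}[\tau_\partial]$ with $\E_\mu[\tau_\partial]$ in terms of $t_{\rm mix}$. The remaining work is then a transfer from $\mu_\star$ to $\mu$: the $L^\infty$ bound \cref{eq:def-T-linfty} at time $T$, the local-time estimate \cref{eq:blue}, Lemma \ref{lemma:eq-is-far} comparing worst-case and stationary tails, and Lemma \ref{coro-aldous} giving $\lambda_\star T=o(1)$. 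If you want to complete your argument along your own lines, you must either prove an analogue of Proposition \ref{lemma-aldous} (existence of the quasi-stationary pair with the expectation comparison) or genuinely execute the spectral/contour control of $\widehat Q$ that you only sketch; without one of these, the uniform-in-$t$ statement \cref{eq:geometric0} does not follow from \cref{eq:small-mixing}.
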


		Henceforth, the FVTL  not only asserts  that  the mixing condition in \cref{eq:small-mixing} guarantees the asymptotic geometric distribution of the hitting time of the target (cf.\ \cref{eq:geometric0}), but also identify the asymptotic behavior of the parameter of the geometric distribution (cf.\ \cref{eq:lambda}). Indeed, as \cref{eq:lambda} shows,  $\lambda$ is asymptotically prescribed by:
		\begin{itemize}
			\item $\mu(\partial)$, the stationary value of the target;
			\item $R$, the mean number of returns to the target within time $T$.
		\end{itemize}   
	\end{color}
		
\begin{color}{black}		Finally, we remark that this version of the FVTL is a slightly more convenient rewriting of the one presented in \cite[Theorem 2.2]{manzo_quattropani_scoppola_2021}. The main difference is that here we do not assume the sub-Markovian chain $[Q]_\partial$ (in which the row and column associated to the target state $\partial$ have been erased) to be irreducible. This condition is not crucial, as already pointed out, e.g., in \cite[Remark 3.8]{aldous82}. For the sake of completeness, we report a complete and self-contained proof of \cref{fvtl} in \cref{apx:FVTL}.
		\end{color}

	\subsection{Auxiliary chain and FVTL}\label{sec:hpfvtl}
	We now apply the FVTL to the auxiliary chain $\Xi_t$ introduced above. In this context, $N=n(n-1)+1$,  $Q=\tilde P$,  $\mu=\tilde \pi$, and $\partial=\Delta$. In particular, recall that $[\tilde P]_\Delta$ denotes the sub-Markovian transition matrix obtained by $\tilde P$ by removing the state $\Delta$.  Therefore, 		in order to verify the assumptions of \cref{fvtl}, it suffices to show the validity of the following lemma.

	\begin{proposition}\label{lemma:hpfvtl}
		Let $G$ be a random DFA, and consider the process $\Xi_t$ defined in \cref{sec:auxiliary-chain}. Letting  $T\eqdef\lceil\log^5(n)\rceil$, \textcolor{black}{$S\eqdef\lceil\log^3(n)\rceil$}, and  $\varepsilon\in (0,1)$, we	  consider the following events:
		\begin{align}
			\label{hp:pimin}\cA_1&\eqdef\left\{\min_{\xi\in{\rm supp}(\tilde \pi)\subseteq\tilde V}\tilde\pi(\xi)\ge n^{-3.6}\right\}\comma\\
			\label{hp:pimax}\cA_2&\eqdef\left\{\max_{\xi\in\tilde V}\tilde\pi(\xi)\le \frac{\log^8(n)}{n}\right\}\comma\\	\label{hp:pidelta}\cA_3&\eqdef\left\{\abs{n\,\tilde\pi(\Delta)-\frac{r}{r-1}}<\varepsilon\right\}\comma\\
			\label{hp:mixing}\cA_4&\textcolor{black}{\eqdef\left\{\max_{\xi\in \tilde V}\|\tilde P^S(\xi,\cdot)-\tilde\pi\|_{\rm TV}< \varepsilon\right\}}\comma\\
			\label{hp:R}\cA_5&\eqdef\left\{\abs{\tonde{\sum_{t=0}^T\tilde P^t(\Delta,\Delta)}-\frac{r}{r-1}}<\eps\right\}
			\fstop
		\end{align}
		Then, for every $\varepsilon >0$, $\cap_{i=1}^5\,\cA_i$ occurs w.h.p..
	\end{proposition}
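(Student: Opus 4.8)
The plan is to verify each of the five events $\cA_1,\dots,\cA_5$ separately and then take a union bound; since there are finitely many, it suffices to show each holds w.h.p. The events $\cA_1$, $\cA_2$, $\cA_3$ concern only the stationary measure $\tilde\pi$, which by \cref{def:pitilde} is determined entirely by the stationary measure $\pi$ of the single walk on $G$. The key input is the known control on $\pi$ from the random-digraph literature: w.h.p. the stationary measure of the random DFA satisfies $\pi(x) = \Theta(1/n)$ up to polylog factors, and more precisely the results of \cite{bordenave_cutoff2019,caputo_quattropani_2021_SPA} give $\min_x \pi(x) \ge n^{-1-o(1)}$ (in fact $\ge n^{-1.8}$ suffices after squaring) and $\max_x \pi(x) \le \log^4(n)/n$ w.h.p. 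From this, $\cA_1$ follows because $\tilde\pi$ on off-diagonal states is $\pi(x)\pi(x') \ge (\min_x\pi(x))^2 \ge n^{-3.6}$, and on $\Delta$ it is $\sum_z \pi(z)^2 \ge (\min_z\pi(z))^2 \cdot$ (one term) which is even larger; $\cA_2$ follows similarly since $\pi(x)\pi(x') \le (\max_x\pi(x))^2 \le \log^8(n)/n^2 \le \log^8(n)/n$, while $\tilde\pi(\Delta) = \sum_z\pi(z)^2 \le \max_z\pi(z) \cdot \sum_z\pi(z) = \max_z\pi(z) \le \log^8(n)/n$.

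The event $\cA_3$ is the more delicate of the ``stationary'' conditions: it asserts $n\sum_z\pi(z)^2 \to \tfrac{r}{r-1}$. This is a second-moment statement about the empirical distribution of $\pi$-values. The heuristic is that on a random $r$-out graph the stationary mass concentrates along in-paths, and a size-biased/branching-process description of $\pi$ (as developed in \cite{caputo_quattropani_2020,caputo_quattropani_2021_SPA}) shows $n\pi(x)$ converges in distribution to a random variable $\varrho$ with $\E[\varrho] = 1$ and $\E[\varrho^2] = \tfrac{r}{r-1}$ — the latter being the second moment of the limiting law coming from a geometric-type sum over the in-neighborhood structure (the $r/(r-1)$ is exactly $\sum_{k\ge 0} r^{-k}$-type geometry). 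So one reduces $\cA_3$ to: (i) a quenched law-of-large-numbers $n\sum_z \pi(z)^2 = \frac1n\sum_z (n\pi(z))^2 \to \E[\varrho^2]$ in probability, which needs a second-moment computation over pairs of vertices together with the polynomial lower bound on $\min\pi$ to control the contribution of atypically large values, and (ii) identification of $\E[\varrho^2] = r/(r-1)$. I would cite the relevant convergence from \cite{caputo_quattropani_2021_SPA} if available, or else carry out the pair-correlation estimate directly; this is where most of the genuine work lies, and it is also the step that forces the specific polylog windows chosen in $\cA_1$–$\cA_2$.

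For $\cA_4$ (mixing of the auxiliary chain in time $S = \lceil\log^3 n\rceil$) and $\cA_5$ (the return sum to $\Delta$ being $\approx r/(r-1)$ over time $T=\lceil\log^5 n\rceil$), the strategy is to transfer mixing from the single walk to the product chain and then to $\tilde P$. W.h.p. the random walk on a random DFA has total-variation mixing time $O(\log n)$ (indeed cutoff at $\frac{\log n}{\log r}$, \cite{bordenave_cutoff2019}); hence the product chain $P^{(2)} = P\otimes P$ mixes in $O(\log n)$ as well. The auxiliary chain $\tilde P$ agrees with $P^{(2)}$ off $\Delta$, and from $\Delta$ it restarts from the $\pi^{\otimes 2}$-size-biased diagonal; since $S = \log^3 n \gg \log n$, a short coupling argument (run the product chain, observe it mixes long before $S$, and the lumping of $\Delta$ only helps) gives $\cA_4$. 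For $\cA_5$, write $\sum_{t=0}^T \tilde P^t(\Delta,\Delta)$: the $t=0$ term is $1$; for $1 \le t \lesssim \log n$ the chain started from the (size-biased) diagonal has not yet mixed, and summing these early return probabilities is precisely a geometric-series computation giving the remaining $\tfrac{1}{r-1}$ (this is the same $\sum_{t\ge1} r^{-t}$ geometry, reflecting that from a common vertex the two walks coincide for a $\mathrm{Geom}(1-1/r)$-distributed number of steps before their words first differ); for $t \gg t_{\rm mix}$ each term $\tilde P^t(\Delta,\Delta) \approx \tilde\pi(\Delta) = O(\log^8 n / n)$, and there are at most $T = \log^5 n$ such terms, contributing $O(\log^{13}n/n) = o(1)$. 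Assembling these three regimes and invoking $\cA_3$ for the precise value $\tilde\pi(\Delta) \approx r/((r-1)n)$ yields $\cA_5$. The main obstacle throughout is $\cA_3$ (equivalently, the precise constant in $\cA_5$): everything else is a fairly standard transfer of known single-walk estimates on random DFAs to the product/auxiliary chain, whereas pinning down $n\sum_z\pi(z)^2 \to r/(r-1)$ requires genuinely understanding the limiting law of the stationary measure.
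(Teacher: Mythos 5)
Your treatment of $\cA_1$ and $\cA_2$ matches the paper's (squaring the known bounds on $\min\pi$ and $\max\pi$), but for the three substantive events there are genuine gaps. For $\cA_3$ you correctly identify $n\sum_z\pi(z)^2\to\frac{r}{r-1}$ as the heart of the matter, yet you defer it either to a distributional limit $n\pi(x)\Rightarrow\varrho$ with $\E[\varrho^2]=\frac{r}{r-1}$ "if available" or to an unspecified pair-correlation estimate; no such limit law with an identified second moment is available in the cited works (the paper itself stresses that $\pi$ on a sparse random digraph cannot be read off a local/branching description), so this step is simply missing. The paper instead proves it directly (\cref{lemma:A2}): it writes $\tilde\pi(\Delta)$ as the meeting probability at time $t=\log^3 n$ of two walks started uniformly, computes first and second moments of this quantity by an annealed construction with four walks revealing the environment as they go, and the constant $\frac{r}{r-1}$ emerges from the geometric sum over the time at which the second annealed walk first hits the first walk's loop-free trajectory and then follows it; Chebyshev plus the single-walk mixing estimate then transfers this to the quenched statement. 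Your heuristic about $\sum_k r^{-k}$ is the right intuition, but the actual argument is precisely the part you have not supplied.

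For $\cA_4$ the claim that "the lumping of $\Delta$ only helps" and a short coupling suffices is not correct as stated: the auxiliary chain and the product chain can only be coupled up to the first hitting of $\Delta$, and after the (geometric) sojourn on $\Delta$ the chain restarts from the exit measure $\mu_+$ in \cref{eq:mu+}, i.e.\ from two \emph{distinct out-neighbors of a common vertex} chosen with $\pi^2$-size-biasing --- a highly correlated initial condition from which the two walks could in principle re-meet quickly and repeatedly, destroying the coupling before time $S$. Ruling this out is exactly the content of \cref{lemma:mu-xy-new} (an annealed estimate conditional on the pair having a common in-neighbor), \cref{lemma:mu+} (a maximum bound on $\mu_+$, which needs the w.h.p.\ absence of three common in-neighbors), and \cref{prop:next-meeting} (w.h.p.\ the $\mu_+$-averaged probability of meeting within polylog time is at most $n^{-1/4}$); none of this appears in your proposal. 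The same omission undermines your $\cA_5$: your geometric-series regime only accounts for never leaving $\Delta$, and the contribution of excursions that leave $\Delta$ and return before the chain has mixed is not controlled; moreover, for the late regime the bound $\tilde P^t(\Delta,\Delta)\le\tilde\pi(\Delta)+\eps$ coming from the fixed-$\eps$ total-variation event \cref{hp:mixing} gives $\eps\,T$ after summing, which is not small. The paper sidesteps both issues in \cref{prop:R} by bounding the entire post-sojourn contribution by $T\sum_\xi\mu_+(\xi)\tilde\Prob_\xi(\tau_\Delta<T)$ and invoking \cref{prop:next-meeting} again, rather than comparing $\tilde P^t(\Delta,\Delta)$ to $\tilde\pi(\Delta)$ term by term.
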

	\cref{fvtl,lemma:hpfvtl}, and the fact that  $P\otimes P$ and $\tilde P$ coincide out of $\Delta$, immediately yield the following result:
	\begin{proposition}\label{pr:fvtltilde}
		Let $G$ be a random DFA and consider two independent walks on $G$. Then, there exists a sequence of random variables $\Lambda=\Lambda_n\in(0,1)$ such that
		\begin{equation}
			n\,\Lambda\overset{\P}\longrightarrow 1\comma\qquad	\sup_{t\ge 0}\left|\frac{\Prob_{\pi\otimes\pi}(\tau_{\rm meet}>t)}{(1-\Lambda)^t}-1 \right|\overset{\P}\longrightarrow 0 \fstop
		\end{equation}
	\end{proposition}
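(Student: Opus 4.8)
The plan is to flesh out the one‑line argument announced right before the statement: apply the FVTL (\cref{fvtl}) to the auxiliary chain $\Xi_t$ of \cref{sec:auxiliary-chain}, with the identification $N=n(n-1)+1$, $Q=\tilde P$, $\mu=\tilde\pi$, target $\partial=\Delta$, and then transport the conclusion back to the product chain $P\otimes P$, using that $P\otimes P$ and $\tilde P$ coincide on $V^2_{\not=}$. Throughout, fix a constant $\varepsilon\in(0,\tfrac1{2e})$ and work on the event $\cap_{i=1}^5\cA_i$ of \cref{lemma:hpfvtl}, which occurs w.h.p.\ for this $\varepsilon$; on this event $P$ has $\pi$ as its unique stationary measure, hence (as recalled in \cref{sec:auxiliary-chain}) $\tilde P$ is ergodic on $\supp(\tilde\pi)$ with unique stationary measure $\tilde\pi$, and $\Delta\in\supp(\tilde\pi)$ by $\cA_3$. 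Since a walk started from $\tilde\pi$ never leaves $\supp(\tilde\pi)$, one may freely restrict $\tilde P$ to $\supp(\tilde\pi)$.

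First I would verify the small‑mixing hypothesis \cref{eq:small-mixing}. On $\cA_4$ one has $\max_{\xi\in\tilde V}\|\tilde P^{S}(\xi,\cdot)-\tilde\pi\|_{\rm TV}<\varepsilon\le\tfrac1{2e}$ with $S=\lceil\log^3 n\rceil$, so $t_{\rm mix}(\tilde P)\le S$; on $\cA_1$, $\min_\xi\tilde\pi(\xi)\ge n^{-3.6}$, so $\log(1/\min_\xi\tilde\pi(\xi))\le 3.6\log n+O(1)$; and on $\cA_3$, $\tilde\pi(\Delta)=\Theta(1/n)$. Multiplying, $\tilde\pi(\Delta)\,t_{\rm mix}\,\log(1/\min_\xi\tilde\pi(\xi))=O(n^{-1}\log^4 n)\to 0$, which is \cref{eq:small-mixing}. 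Hence \cref{fvtl} yields a (realization‑dependent) $\lambda=\lambda_n\in(0,1)$ with $\sup_{t\ge0}|\,\Pr(\tau_\Delta>t\mid\Xi_0\sim\tilde\pi)/(1-\lambda)^t-1|\to 0$. To identify $\lambda$ I would invoke the ``moreover'' part of \cref{fvtl} with $T=\lceil\log^5 n\rceil$: the first condition in \cref{eq:hp} holds since $2t_{\rm mix}\log(1/\min_\xi\tilde\pi(\xi))\le 2S(3.6\log n+O(1))=O(\log^4 n)\ll\log^5 n=T$, and the second since $\tilde\pi(\Delta)\,T=O(n^{-1}\log^5 n)\to 0$. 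Thus $\tfrac{\lambda}{\tilde\pi(\Delta)/R}\to 1$ with $R=\sum_{t=0}^T\tilde P^t(\Delta,\Delta)$; by $\cA_5$, $R\in(\tfrac r{r-1}-\varepsilon,\tfrac r{r-1}+\varepsilon)$, while by $\cA_3$, $n\,\tilde\pi(\Delta)\in(\tfrac r{r-1}-\varepsilon,\tfrac r{r-1}+\varepsilon)$, so $n\lambda$ lies within $1\pm O(\varepsilon)$ asymptotically.

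To conclude, I would set $\Lambda=\Lambda_n\eqdef\lambda_n$ on $\cap_{i=1}^5\cA_i$ and $\Lambda_n\eqdef 1/n$ off this event, so that $\Lambda_n\in(0,1)$ always. Since $\cap_{i=1}^5\cA_i$ has probability tending to $1$ for \emph{every} $\varepsilon>0$, the two desired convergences $n\Lambda_n\overset{\P}{\longrightarrow}1$ and $\sup_{t\ge0}|\cdots-1|\overset{\P}{\longrightarrow}0$ follow from the previous paragraph either from a quantitative form of \cref{fvtl} or via a routine subsequence argument: if either failed, one extracts a sequence of realizations along which the FVTL hypotheses hold with the same explicit bounds (taking $\varepsilon$ small, for the first convergence) yet the conclusion fails, contradicting \cref{fvtl}. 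Finally, $\Prob_{\pi\otimes\pi}(\tau_{\rm meet}>t)=\tilde\Prob_{\tilde\pi}(\tau_\Delta>t)$ for every $t$, because the hitting time of $\Delta$ ignores the dynamics out of $\Delta$, because $P\otimes P$ and $\tilde P$ coincide on $V^2_{\not=}$ and the merged transitions into $\Delta$ match, and because $\tilde\pi$ restricted to $V^2_{\not=}$ equals $\pi\otimes\pi$ restricted to $V^2_{\not=}$, with the remaining mass $\tilde\pi(\Delta)=\sum_z\pi(z)^2=\Prob_{\pi\otimes\pi}(X^{(1)}_0=X^{(2)}_0)$. This gives \cref{pr:fvtltilde}.

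The deduction above is essentially bookkeeping once \cref{fvtl,lemma:hpfvtl} are in hand, so the real obstacle is \cref{lemma:hpfvtl} itself (assumed here): $\cA_1$ (an $n^{-3.6}$ lower bound on the stationary mass of $\tilde P$) and $\cA_4$ (a ${\rm polylog}(n)$ mixing bound for $\tilde P$) require importing and lifting to the two‑walk setting the delicate control of the stationary distribution and mixing time of a single random walk on a random digraph from \cite{bordenave_cutoff2019,caputo_quattropani_2021_SPA,caputo_quattropani_2020}, while $\cA_3$ and $\cA_5$ need first‑order asymptotics for $\sum_z\pi(z)^2$ and for the short‑time return probabilities of $\Xi_t$ to $\Delta$. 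The only genuinely new point in the present deduction is the passage from the deterministic FVTL to an in‑probability statement about the random chain $\tilde P$, which the subsequence argument handles.
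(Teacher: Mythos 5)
Your proposal is correct and follows exactly the route the paper intends: the paper's proof of \cref{pr:fvtltilde} is precisely the observation that \cref{fvtl} applied to $\tilde P$ with target $\Delta$, the verification of its hypotheses supplied by \cref{lemma:hpfvtl} (with $\cA_3$ and $\cA_5$ identifying $n\lambda\to 1$ through \cref{eq:lambda}), and the coincidence of $\tilde P$ with $P\otimes P$ off the diagonal together give the claim. Your additional bookkeeping (fixing $\varepsilon<\tfrac1{2e}$ so $\cA_4$ bounds $t_{\rm mix}$, defining $\Lambda$ off the good event, and the subsequence argument turning the deterministic FVTL into an in-probability statement) is exactly the routine content the paper leaves implicit.
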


	\cref{sec:meeting-stationarity} is devoted to the proof of \cref{lemma:hpfvtl}. In \cref{sec:theorems_proofs} we use \cref{pr:fvtltilde} to deduce \cref{th:1,th:2}.

	\begin{color}{black}
		\begin{remark}\label{rmk:indep-r-2}
			As already pointed out in right below \cref{coro:mean-meeting}, the asymptotic distribution of the meeting time does not depend on $r$, the out-degree. Indeed, while the intuition that it should depend on  $r$ seems plausible, actually --- as the First Visit Time Lemma rigorously prescribes --- the mean of the meeting  time asymptotically depends on the  ratio between the following two quantities: \textit{the stationary measure of the diagonal} over \textit{the expected sojourn time on the diagonal} (within the mixing time). Since both such quantities are asymptotically equal (up to normalization) to $r/(r-1)$ (see \cref{hp:pidelta,hp:R}), the dependence on $r$ cancels out, w.h.p.,  in the  asymptotic distribution of the meeting time. 
		\end{remark}
	\end{color}

	\section{Meeting time starting from stationarity. Proof of \cref{lemma:hpfvtl}}\label{sec:meeting-stationarity}
	Throughout the rest of the paper, for notational convenience, we omit writing the integer part $\ceil{\emparg}$ of all time variables.
	
	In order to prove \cref{lemma:hpfvtl}, we start by recalling some known results on the behavior of a single random walk and its stationary measure on the random DFA $G$.
	\begin{theorem}[\cite{bordenave_cutoff2019, addario-berry_diameter2020,  caputo_quattropani_2021_SPA}]\label{th:single_rw}Let $G$ be a random DFA.
		\begin{itemize}
			\item \textbf{Uniqueness of the stationary measure} (\cite[Theorem 1]{bordenave_cutoff2019}): w.h.p., 
			\begin{equation}\label{eq:uniquepi}
				\exists!\, \pi: \pi P=\pi\fstop
			\end{equation}
			\item \textbf{Mixing with cutoff} (\cite[Theorem 1]{bordenave_cutoff2019}): for $\alpha>0$ and $t_\alpha\eqdef \alpha\log(n)$,	w.h.p.,
			\begin{align}\label{eq:cutoff}
				\max_{x\in V}\left| \| P^{t_\alpha}(x,\cdot)- \pi \|_{\rm TV}-\ind_{\(-\infty,\frac{1}{\log(r)}\)}(\alpha)\right|\overset{\P}{\longrightarrow}
				0\comma\qquad \alpha\neq \frac{1}{\log(r)}\fstop
			\end{align}
			\item \textbf{Minimum of $\pi$} (\cite[Theorem 35]{addario-berry_diameter2020}): w.h.p.,
			\begin{align}\label{eq:min-pi}
				\min_{x\in {\rm supp}(\pi)}\pi(x)\ge \frac{1}{n^{1.8}}\fstop
			\end{align}
			\item \textbf{Maximum of $\pi$} (\cite[Lemma 4.2]{caputo_quattropani_2021_SPA}): w.h.p.,
			\begin{align}\label{eq:max-pi}
				\max_{x\in V}\pi(x)\le \frac{\log^8(n)}{n}\fstop
			\end{align}
		\end{itemize}
	\end{theorem}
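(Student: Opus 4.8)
The four displays are not original to this paper: \cref{eq:uniquepi} and \cref{eq:cutoff} are \cite[Theorem~1]{bordenave_cutoff2019}, \cref{eq:min-pi} is \cite[Theorem~35]{addario-berry_diameter2020}, and \cref{eq:max-pi} is \cite[Lemma~4.2]{caputo_quattropani_2021_SPA}. Hence the ``proof'' is really an import, and the plan is only to recall the mechanisms behind each, flagging the delicate ones. Throughout I use that the one-to-one assumption on the $f_x$'s forces $P=\frac1r A$ with $A$ the adjacency matrix of a simple $r$-out-regular random digraph, and that, for $w\in\cC^t$ uniform, $w(x)$ is distributed as $X_t$ under $\Prob_x$.

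\textbf{Uniqueness of $\pi$ and the lower bound in the cutoff.} For \cref{eq:uniquepi} one shows there is a unique attracting (terminal) strongly connected component: the forward exploration from any vertex is, up to depth $o(\log n)$, a random tree with out-degree $r$, and a birthday-type estimate shows that two such explorations intersect before reaching size $n^{1/2}\log n$; thus from every vertex the walk is eventually trapped in a common bottom component, inside which short cycles of coprime lengths give irreducibility and aperiodicity w.h.p. For the lower bound hidden in \cref{eq:cutoff}, fix $\alpha<1/\log r$ and $t=t_\alpha$: the law $P^{t}(x,\emparg)$ is supported on $\{w(x):w\in\cC^{t}\}$, of cardinality at most $r^{t}=n^{\alpha\log r}=o(n)$, so by \cref{eq:max-pi} the measure $\pi$ charges this support by at most $n^{\alpha\log r-1}\log^8(n)=o(1)$, whence $\|P^{t}(x,\emparg)-\pi\|_{\rm TV}\ge 1-o(1)$ uniformly in $x$.

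\textbf{The upper bound in the cutoff.} This is the analytically hardest input and I would not reprove it. For $\alpha>1/\log r$ one has $r^{t_\alpha}\gg n$, and the point is that a uniform word of that length equidistributes the endpoint. In \cite{bordenave_cutoff2019} this follows from an $L^2$ argument: one controls $\sum_{y}\bigl(P^{t}(x_0,y)-\pi(y)\bigr)^2$ — equivalently the number of pairs of length-$t$ walks from $x_0$ sharing an endpoint — via a second-moment/path-counting analysis exploiting that, on the scale $t=\Theta(\log n)$, the explored subgraph is tree-like and the walk essentially never backtracks. I expect reproducing this step to be the main obstacle; the plan is to cite \cite{bordenave_cutoff2019} outright.

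\textbf{Minimum and maximum of $\pi$.} Both rest on iterating the harmonicity identity $\pi(x)=\frac1r\sum_{y\in\neighborsin(x)}\pi(y)$ into $\pi(x)=r^{-k}\sum_{\gamma}\pi(\gamma_0)$, the sum over directed walks $\gamma$ of length $k$ ending at $x$. For \cref{eq:min-pi}: by \cite{addario-berry_diameter2020} the digraph has diameter $(1+o(1))\log_r n$ and, for every $x$, the in-ball of radius $1.8\log_r n$ around $x$ contains all of the attracting component but $n^{o(1)}$ vertices; together with \cref{eq:max-pi} this in-ball then carries $1-o(1)$ of the stationary mass, and since each of its vertices reaches $x$ within $1.8\log_r n$ steps — a shortest path padded along a short closed walk through $x$, which exists w.h.p.\ by aperiodicity — one gets $\pi(x)\ge(1-o(1))\,r^{-1.8\log_r n}=(1-o(1))\,n^{-1.8}$. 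For \cref{eq:max-pi}: one truncates the iterated identity at depth $k$ of order $\log_r n$ and estimates the resulting sum by controlling, uniformly over $x$, the depth-$k$ in-neighbourhood — its size, the number of walks it supports, and the stationary mass carried at their origins — a union bound over the $n$ roots being affordable because the in-population has exponential tails; the execution in \cite{caputo_quattropani_2021_SPA}, which handles the interplay between the walk count and the mass at the origins via a bootstrap on $\max_y\pi(y)$, yields $\max_y\pi(y)\le\log^8(n)/n$.
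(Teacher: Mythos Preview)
Your proposal is correct: this theorem is stated in the paper purely as a compilation of external results, with no proof or sketch given beyond the citations themselves, and you identify this accurately at the outset. Your added sketches of the mechanisms behind each cited result go beyond what the paper does (it simply imports the statements), but they are consistent with the same approach and broadly faithful to the cited sources.
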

	
	In this rest of this section, we focus on the auxiliary chain $\Xi_t$ introduced in \cref{sec:preliminaries}.
	Let us observe that, since \cref{eq:uniquepi} occurs w.h.p., when proving \cref{lemma:hpfvtl}, we will implicitly assume that the random DFA $G$ gives rise to an ergodic chain $(P,\pi)$; by the discussion at the end of \cref{sec:auxiliary-chain}, the auxiliary chain $\Xi_t$ has a unique stationary measure $\tilde \pi$ as given in \cref{def:pitilde}.

	\begin{color}{black}
		\subsection{Organization of the proof of \cref{lemma:hpfvtl}}\label{sec:organization}
		The rest of the section is divided into three parts. 
		
		In \cref{suse:pi}, we control the probability of the events $\cA_1$, $\cA_2$ and $\cA_3$, i.e., we bound extremal entries of $\tilde{\pi}$ and provide the first order asymptotics of  $\tilde{\pi}(\Delta)$. While the former control easily follows from \cref{th:single_rw} and is the content of \cref{lemma:pi-diag,lemma:max-pi}, the  latter requires a deeper analysis, which we carry out in \cref{lemma:A2}.
		
		In \cref{suse:mixing}, we analyze the mixing time of the auxiliary chain, showing in \cref{prop:mix-diag} that,  w.h.p., $\cA_4$ holds.  The proof of this result relies on the mixing result in \cref{th:single_rw} for a single walk and on a coupling of the the auxiliary chain with two independent walks. This part is divided into three main lemmas, \cref{lemma:mu+,lemma:mu-xy-new,prop:next-meeting}, which essentially show that, once the auxiliary chain exits the state $\Delta$, it can be coupled with the product chain for a polylogarithmic number of steps at a small TV-cost. 
		
		Finally, exploiting the tools developed in \cref{suse:mixing}, in \cref{suse:returns} we focus on the number of returns to $\Delta$ for the auxiliary chain, ensuring that, w.h.p.,  $\cA_5$ holds.
	\end{color}
	\subsection{Estimating $\tilde \pi$}\label{suse:pi} Recall the events $\cA_1$, $\cA_2$ and $\cA_3$ in \cref{hp:pimin,hp:pidelta,hp:pimax}. In the following three lemmas, we respectively show that $\lim_{n\to \infty}\P(\cA_i)=1$ for $i=1,2,3$.	
	\begin{lemma}[Minimum of $\tilde \pi$]\label{lemma:pi-diag}
		$\lim_{n\to \infty}\P(\cA_1)=1$.
	\end{lemma}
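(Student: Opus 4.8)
The plan is to reduce the claim to the single-walk lower bound on $\pi$ recorded in \cref{eq:min-pi}. Recall from \cref{def:pitilde} that $\tilde\pi$ factorizes off the diagonal, namely $\tilde\pi((x,x'))=\pi(x)\pi(x')$ for $x\not=x'$, while $\tilde\pi(\Delta)=\sum_{z\in V}\pi(z)^2$. Hence the restriction of $\supp(\tilde\pi)$ to $V^2_{\not=}$ is exactly the set of ordered pairs of \emph{distinct} elements of $\supp(\pi)$, and on this set
\[
\min_{\xi\in\supp(\tilde\pi)\cap V^2_{\not=}}\tilde\pi(\xi)=\left(\min_{x\in\supp(\pi)}\pi(x)\right)^{2},
\]
provided $\lvert\supp(\pi)\rvert\ge 2$ so that such pairs exist. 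First I would dispatch this proviso: by \cref{eq:max-pi}, w.h.p.\ $\max_{x\in V}\pi(x)\le\log^8(n)/n$, which (since $\pi$ is a probability vector) forces $\lvert\supp(\pi)\rvert\ge n/\log^8(n)\ge 2$ for $n$ large, so the displayed identity holds w.h.p.

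Next, by \cref{eq:min-pi}, w.h.p.\ $\min_{x\in\supp(\pi)}\pi(x)\ge n^{-1.8}$, and squaring yields $\min_{\xi\in\supp(\tilde\pi)\cap V^2_{\not=}}\tilde\pi(\xi)\ge n^{-3.6}$; note that the exponent $3.6$ appearing in the definition of $\cA_1$ is precisely $2\times 1.8$, tuned to this estimate. It remains to handle the single extra state $\Delta$: since $\tilde\pi(\Delta)=\sum_{z\in V}\pi(z)^2\ge\bigl(\sum_{z\in V}\pi(z)\bigr)^2/n=1/n$ by Cauchy--Schwarz, we have $\tilde\pi(\Delta)\ge 1/n\ge n^{-3.6}$ for every $n\ge 1$, deterministically. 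Taking a union bound over the (two) events from \cref{th:single_rw} invoked above, together with the ergodicity assumption on $(P,\pi)$ already in force, we conclude that $\cA_1$ holds w.h.p.

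There is no genuine obstacle in this lemma: it is an essentially immediate consequence of the cited estimate \cref{eq:min-pi} on $\min\pi$, combined with the trivial observation that collapsing the diagonal into the single state $\Delta$ cannot create a state of atypically small stationary mass. The substantive work in \cref{suse:pi} is reserved for \cref{lemma:A2}, i.e.\ for pinning down the first-order asymptotics of $\tilde\pi(\Delta)$, rather than for this lower bound.
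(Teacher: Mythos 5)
Your proposal is correct and follows essentially the same route as the paper: apply Cauchy--Schwarz to get $\tilde\pi(\Delta)\ge n^{-1}$, and combine \cref{def:pitilde} with the bound $\min_{x\in\supp(\pi)}\pi(x)\ge n^{-1.8}$ from \cref{eq:min-pi} to get $\min_{\xi\ne\Delta,\ \xi\in\supp(\tilde\pi)}\tilde\pi(\xi)\ge n^{-3.6}$. (The only cosmetic slip is writing the off-diagonal minimum as exactly $\bigl(\min_{x\in\supp(\pi)}\pi(x)\bigr)^2$ --- since the two coordinates must be distinct it is in general only bounded below by this quantity, which is all that is needed.)
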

	\begin{proof}
		By Cauchy-Schwarz inequality,
		$\tilde\pi(\Delta)\ge n^{-1}$, while
		\cref{def:pitilde,eq:min-pi} yield		
		\begin{align*}\min_{\xi\neq\Delta}\tilde\pi(\xi)\ge \frac{1}{n^{3.6}}\fstop
		\end{align*}
		This concludes the proof of the lemma.
	\end{proof}
	\begin{lemma}[Maximum of $\tilde \pi$]
		\label{lemma:max-pi}
		$\lim_{n\to \infty}\P(\cA_2)=1$.	
	\end{lemma}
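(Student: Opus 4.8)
The plan is to read off the bound directly from the single-walk estimate on $\max_{x\in V}\pi(x)$ recorded in \cref{eq:max-pi} of \cref{th:single_rw}. Write $\cB=\cB^{(n)}$ for the event $\left\{\max_{x\in V}\pi(x)\le \tfrac{\log^8(n)}{n}\right\}$; by \cref{th:single_rw} we have $\P(\cB)\to 1$. The strategy is to show $\cB\subseteq\cA_2$ for all $n$ large enough, since then $\P(\cA_2)\ge\P(\cB)\to 1$, which is the claim.

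So suppose $\cB$ holds. First I would bound the off-diagonal entries of $\tilde\pi$. For $\xi=(x,x')\in V^2_{\not=}$, by \cref{def:pitilde},
\[
\tilde\pi(x,x')=\pi(x)\pi(x')\le\left(\frac{\log^8(n)}{n}\right)^{\!2}=\frac{\log^8(n)}{n}\cdot\frac{\log^8(n)}{n}\le\frac{\log^8(n)}{n},
\]
where the last step uses $\log^8(n)/n\le 1$, valid for all sufficiently large $n$. Next I would treat the diagonal state. Since $\pi$ is a probability measure, $\sum_{z\in V}\pi(z)=1$, whence on $\cB$
\[
\tilde\pi(\Delta)=\sum_{z\in V}\pi(z)^2\le\Bigl(\max_{z\in V}\pi(z)\Bigr)\sum_{z\in V}\pi(z)=\max_{z\in V}\pi(z)\le\frac{\log^8(n)}{n}.
\]
Taking the maximum over $\xi\in\tilde V$ of the two displays shows that $\cA_2$ holds, so indeed $\cB\subseteq\cA_2$ for $n$ large, completing the argument.

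There is no genuine obstacle here: the lemma is an immediate consequence of the quenched bound \cref{eq:max-pi} on $\max_x\pi(x)$, the only (harmless) point being that squaring this bound for the off-diagonal states costs an extra factor $\log^8(n)/n$, which is $\le 1$ for large $n$, and that for $\tilde\pi(\Delta)$ one uses the $\ell^1$--$\ell^\infty$ comparison $\sum_z\pi(z)^2\le\max_z\pi(z)$ rather than squaring.
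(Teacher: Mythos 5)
Your proof is correct and follows essentially the same route as the paper: both arguments reduce the claim to the single-walk bound \cref{eq:max-pi}, bounding the off-diagonal entries by products of $\pi$-values and the diagonal entry via $\sum_z\pi(z)^2\le\max_z\pi(z)$. The only cosmetic difference is that you square the bound off the diagonal and then use $\log^8(n)/n\le 1$, whereas the paper directly bounds $\pi(x)\pi(x')\le\max_x\pi(x)$.
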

	\begin{proof}
		By the definition of $\tilde\pi$ in \cref{def:pitilde}, H\"older inequality yields 
		\begin{align*}
			\max_{\xi\in \tilde V}\tilde\pi(\xi)\le \max\left\{\max_{(x,y)\in V^2_{\not=}} \pi(x)\pi(y)\comma \tilde\pi(\Delta)\right\}\le \max_{x\in V} \pi(x)\fstop
		\end{align*}
		\cref{eq:max-pi} concludes the proof of the lemma.
	\end{proof}
	\begin{lemma}[Value of $\tilde\pi(\Delta)$]\label{lemma:A2}
		$\lim_{n\to \infty}\P(\cA_3)=1$, for every $\eps>0$.
	\end{lemma}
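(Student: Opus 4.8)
The assertion is that $n\,\tilde\pi(\Delta)\overset{\P}{\longrightarrow}\tfrac{r}{r-1}$, and since $\tilde\pi(\Delta)=\sum_{z\in V}\pi(z)^2$ by \cref{def:pitilde}, this is a statement about the single walk on $G$. Put $S\eqdef n\sum_{z\in V}\pi(z)^2$. The plan is to prove $S\overset{\P}{\longrightarrow}\tfrac{r}{r-1}$ by a first/second moment argument: by Chebyshev's inequality it is enough that $\E[S]\to\tfrac{r}{r-1}$ and $\mathrm{Var}(S)\to 0$ (we work on the w.h.p.\ event \cref{eq:uniquepi} where $\pi$ is well defined; as $0\le S\le n$ always, the complement is harmless). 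The common starting point is the identity obtained from $\pi P=\pi$---equivalently, from the stationarity of $\tilde\pi$ used in \cref{def:pitilde}---namely, writing $\sum_{z}P(x,z)P(x',z)=\tfrac1{r^2}\tabs{\img(f_x)\cap\img(f_{x'})}$ and isolating the diagonal $x=x'$ (where $\sum_z P(x,z)^2=\tfrac1r$),
\begin{equation*}
\frac{r-1}{r}\sum_{z\in V}\pi(z)^2\;=\;\frac1{r^2}\sum_{x\neq x'}\pi(x)\,\pi(x')\,\tabs{\img(f_x)\cap\img(f_{x'})}\fstop
\end{equation*}
This identity alone merely restates the fixed-point equation for $\pi$ and carries no information; the constant $\tfrac{r}{r-1}$ must be extracted from the randomness of $G$. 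The mechanism is decorrelation: for a typical DFA, $\pi$ is a \emph{global} object depending only weakly on the out-edges of any fixed pair of vertices, so $\pi(x)\pi(x')$ becomes asymptotically independent of $\tabs{\img(f_x)\cap\img(f_{x'})}$, which depends on $f_x,f_{x'}$ alone and satisfies $\E\tabs{\img(f_x)\cap\img(f_{x'})}=r^2/n$ exactly.

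For the first moment I would, for fixed $x\neq x'$, write $\tabs{\img(f_x)\cap\img(f_{x'})}=\sum_{c,c'\in\cC}\car_{\{f_x(c)=f_{x'}(c')\}}$ and reduce by color-symmetry to $r^{2}\,\E\!\left[\pi(x)\pi(x')\,\car_{\{f_x(1)=f_{x'}(1)\}}\right]$. Replacing $\pi$ by the stationary measure $\hat\pi$ of the DFA in which the single images $f_x(1),f_{x'}(1)$ have been independently resampled produces a vector independent of $(f_x(1),f_{x'}(1))$, whose distance to $\pi$ is estimated by a single-row perturbation bound for stationary measures; the quantitative inputs here are the a priori bounds of \cref{th:single_rw}, in particular $\max_z\pi(z)\le\log^8(n)/n$ (\cref{eq:max-pi}) and the $O(\log n)$-type control on the Green's function / fundamental matrix of the walk that follows from the cutoff bound on the mixing time. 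Inserting this into the identity above, and using $\sum_{x\neq x'}\E[\pi(x)\pi(x')]=1-\E[S]/n$ (exactly), one should obtain an asymptotic self-consistent relation of the form $\E[S]\big(\tfrac{r-1}{r}+o(1)\big)=1+o(1)$, hence $\E[S]\to\tfrac{r}{r-1}$.

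For the variance, the same single-row stability estimate bounds the change of $\sum_z\pi(z)^2$ under a resampling of one out-map $f_u$ by $\max_z\pi(z)\cdot\|\pi-\hat\pi\|_1=O(\mathrm{polylog}(n)/n^2)$, i.e.\ the change of $S$ by $O(\mathrm{polylog}(n)/n)$; a bounded-differences argument over the $n$ independent maps $f_1,\dots,f_n$---via McDiarmid's inequality in its ``typically bounded differences'' form, to absorb the rare event on which the a priori bounds fail---then gives $S-\E[S]\overset{\P}{\longrightarrow}0$, so $\mathrm{Var}(S)\to 0$. Combining the two steps yields $S\overset{\P}{\longrightarrow}\tfrac{r}{r-1}$, i.e.\ $\P(\cA_3)\to 1$ for every $\eps>0$.

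The main obstacle is making the decorrelation quantitative: one must show, uniformly over the random DFA, that the weight $\pi(x)\pi(x')$ is independent of the edge data $\tabs{\img(f_x)\cap\img(f_{x'})}$ up to a genuinely $o(1)$ relative error (and similarly control one-row perturbations of $\pi$ on the correct polylogarithmic scale). This is exactly the incarnation, in the present second-moment computation, of the difficulty emphasized in the introduction that the stationary measure of a sparse random digraph is a non-local random object, and it is where the structural estimates on $\pi$ from \cite{bordenave_cutoff2019,addario-berry_diameter2020,caputo_quattropani_2021_SPA} must be pushed. At a conceptual level the value $\tfrac{r}{r-1}$ is explained by the recursive distributional characterization of the limit $W$ of $n\,\pi(\cdot)$: $W$ solves $W\overset{d}{=}\tfrac1r\sum_{i=1}^{N}W_i$ with $N\sim\Poisson(r)$ and $W_i$ i.i.d.\ copies of $W$, whence $\E[W]=1$ and $\E[W^2]=\tfrac1r\E[W^2]+1$, i.e.\ $\E[W^2]=\tfrac{r}{r-1}$, and $S=\tfrac1n\sum_z\big(n\pi(z)\big)^2$ should converge to this second moment by uniform integrability.
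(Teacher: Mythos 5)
Your reduction via the stationarity identity $\tfrac{r-1}{r}\sum_z\pi(z)^2=\tfrac{1}{r^2}\sum_{x\neq x'}\pi(x)\pi(x')\,\abs{\img(f_x)\cap\img(f_{x'})}$ is correct, and your closing heuristic does explain the constant; but the proof has a genuine gap exactly at the step you yourself flag as ``the main obstacle'': the quantitative decorrelation is not established, and the tools you invoke cannot establish it at the required precision. To extract $\E[S]\to\tfrac{r}{r-1}$ from the self-consistent relation you need, for each pair $x\neq x'$,
\begin{equation*}
\E\quadre{\pi(x)\pi(x')\,\car_{\{f_x(c)=f_{x'}(c')\}}}=\tonde{1+o(1)}\,\tfrac1n\,\E\quadre{\pi(x)\pi(x')}\comma
\end{equation*}
i.e.\ an additive error $o(1/n^3)$, since the main term has order $n^{-3}$. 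The resampling mechanism you propose gives, at best, $\norm{\pi-\hat\pi}_1\lesssim\tonde{\pi(x)+\pi(x')}\,t_{\rm mix}\lesssim \log^{9}(n)/n$ (using \cref{eq:max-pi} and the fundamental-matrix bound that follows from \cref{eq:cutoff}), hence $\abs{\pi(x)\pi(x')-\hat\pi(x)\hat\pi(x')}\lesssim \log^{17}(n)/n^2$; multiplied by the probability $\asymp 1/n$ of the resampled-edge event, this is an error of order ${\rm polylog}(n)/n^3$, which is \emph{larger} than the main term itself. So with the a priori inputs of \cref{th:single_rw} the argument cannot even recover the leading constant, let alone the $o(1)$ relative error; one would need a much finer statement (that conditioning $x,x'$ on sharing an out-neighbour perturbs $\E[\pi(x)\pi(x')]$ only by a $1+o(1)$ factor), which is nowhere proved. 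The concentration half (one-row stability of $\sum_z\pi(z)^2$ plus McDiarmid in typically-bounded-differences form) is plausible but rests on the same unproven one-row estimate, so the proposal as written does not yield \cref{lemma:A2}.

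For contrast, the paper avoids any perturbation analysis of $\pi$ altogether: using \cref{eq:cutoff} it replaces $\tilde\pi(\Delta)=\sum_z\pi(z)^2$ by the quantity $Y$ in \cref{eq:Y}, the probability that two independent walks launched from independent uniform states occupy the same vertex at time $t=\log^3(n)$, and then computes $\E[Y]$ and $\E[Y^2]$ by an annealed construction in which the environment is exposed along the (two, respectively four) trajectories. There the constant $\tfrac{r}{r-1}$ emerges from the geometric series in \cref{eq:n1good-n2good} over the time at which the second walk glues onto the first walk's path (each further step of agreement costing $1/r$), and the second-moment bound \cref{eq:second-moment} follows because the four trajectories typically do not intersect; Chebyshev then gives \cref{hp:pidelta}. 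If you want to salvage your route, the missing ingredient is precisely a decorrelation estimate for $\pi$ against local edge data with $o(1)$ relative error, which is a substantially harder statement than the crude bounds \crefrange{eq:min-pi}{eq:max-pi} you rely on.
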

	\begin{proof}
		Recall the definition of $\tilde \pi$ from \cref{def:pitilde}, and fix $t\eqdef \log^3(n)$. Instead of proving the desired claim directly, we first show that, letting
		\begin{align}\label{eq:Y}
			Y\eqdef \frac{1}{n^2}\sum_{y,z\in V}\sum_{x\in V}P^t(y,x)P^t(z,x)\comma
		\end{align}
		the following two claims hold:
		\begin{align}\label{eq:first-moment}
			\E\quadre{Y} = \frac{1}{n}	 \frac{r}{r-1}+o\tonde{\frac{1}{n}}\comma
		\end{align}
		and
		\begin{align}\label{eq:second-moment}
			\E[Y^2]\le \E\quadre{Y}^2+ o\tonde{\frac{1}{n^2}}\fstop
		\end{align}
		\cref{eq:first-moment,eq:second-moment} conclude the proof of the lemma. Indeed, by the triangle and Chebyshev inequalities, 
		\begin{align*}
			&\P\tonde{\abs{n\tilde \pi(\Delta)-\frac{r}{r-1}}>\eps}
			\\
			&\qquad\le 	\P\tonde{\abs{n\tilde \pi(\Delta)-nY}>\frac{\varepsilon}{2}}+	n^2\frac{\E[Y^2]-\E\quadre{Y}^2}{(\varepsilon/4)^2} + \car_{(\frac{\varepsilon}{4},\infty)}\tonde{\abs{n\E\quadre{Y}-\frac{r}{r-1}}}\fstop
		\end{align*}
		While the second and third terms on the right-hand side above vanish as $n\to \infty$ by \cref{eq:first-moment,eq:second-moment}, the first term vanishes  by   the fact that  $t$ is order $\log^2(n)$ times the mixing time (see \cref{eq:cutoff}):
		\begin{align*}
			\lim_{n\to \infty}\P\tonde{	\max_{x,y\in V}\abs{P^t(x,y)-\pi(y)}\le n^{-2}}=1\fstop
		\end{align*}
		
		We are left to show the validity of \cref{eq:first-moment,eq:second-moment}. As a general strategy, we employ a system of four \emph{annealed random walks} (see \cite[Section 2.2]{bordenave_cutoff2019}) running  for a time $t=\log^3(n)$.	Roughly speaking, starting from an empty environment, we construct the whole trajectories of these walks \emph{one at the time}, and concurrently  construct the environment that these walks explore. More precisely, let
		\begin{align*}
			\tonde{\tonde{Z^{(1)}_s,Z^{(2)}_s,Z^{(3)}_s,Z^{(4)}_s}}_{s=0}^t \in (V^4)^{t+1}\comma
		\end{align*}
		be the non-Markovian process with law $\P^{\rm an}$ constructed as follows:
		\begin{enumerate}[(i)]
			\item Initially,  set the environment, say $\sigma^{(1)}$, to consist of an \textquotedblleft empty graph\textquotedblright, i.e.,  $\sigma^{(1)}_0\eqdef \emp$.
			\item Select a uniformly random vertex $y\in V$, and consider a walk $Z^{(1)}$ starting at $y$, i.e., $Z^{(1)}_0\eqdef y$.
			\item At every step $s\in \{0,\ldots, t\}$, 	 given the
			current environment $\sigma^{(1)}_s$ and position of the walk $Z^{(1)}_s$, the walk picks a uniformly random color $c\in\cC$ and looks at the associated out-going edge from $Z^{(1)}_s$:
			\begin{itemize}
				\item If the $c$-tail of the vertex $Z^{(1)}_s$ is unmatched, select a uniformly random destination among all vertices in $V$ which have no directed edge from $Z^{(1)}_s$, yet. Then, call $\sigma^{(1)}_{s+1}$ the new environment obtained from $\sigma^{(1)}_s$ by adding this new edge, and move the walk to this vertex.
				\item If the $c$-tail of the vertex $Z^{(1)}_s$ is already matched, i.e., the $c$-colored directed out-going edge from $Z^{(1)}_s$ already belongs to the environment $\sigma^{(1)}_s$, then simply set $\sigma^{(1)}_{s+1}\eqdef \sigma^{(1)}_s$, and   move the walk to the end-point of the $c$-tail attached to $Z^{(1)}_s$.
			\end{itemize}
			\item Once the first walk $Z^{(1)}$ has completed its trajectory of length $t$, perform the same procedure for the second walk $Z^{(2)}$, but this time starting with the environment $\sigma^{(2)}_0\eqdef \sigma^{(1)}_t$, i.e., the environment already revealed by the trajectory of $Z^{(1)}$. Similarly for $Z^{(3)}$ and $Z^{(4)}$, respectively with starting environments $\sigma^{(3)}_0\eqdef \sigma^{(2)}_t$ and $\sigma^{(4)}_0\eqdef \sigma^{(3)}_t$. 
		\end{enumerate} 
		These annealed walks provide us with an alternative expression for $\E[Y]$ and $\E[Y^2]$: recalling $Y$ in \cref{eq:Y},
		\begin{align}\label{eq:first-moment-an}
			\E[Y]=\P^{\rm an}\ttonde{Z^{(1)}_t=Z^{(2)}_t}\comma
		\end{align}
		and
		\begin{align}\label{eq:second-moment-an}
			\E[Y^2]=\P^{\rm an}\ttonde{Z^{(1)}_{t}=Z^{(2)}_t\comma Z^{(3)}_{t}=Z^{(4)}_t}\fstop
		\end{align}
		
		We start with the proof of \cref{eq:first-moment} using \cref{eq:first-moment-an}.
		To this purpose, letting	
		\begin{align}\label{eq:N}
			\cN\eqdef \bigsqcup_{x\in V}\cN_x\eqdef \bigsqcup_{x\in V}\{Z^{(1)}_t=Z^{(2)}_t=x \} 
			\comma
		\end{align}
		we have
		\begin{align}\label{eq:first-moment-an-sum}
			\P^{\rm an}\ttonde{Z^{(1)}_{t}=Z^{(2)}_t}=\sum_{x\in V}\P^{\rm an}(\cN_x)\comma
		\end{align}
		and, by symmetry,  all the summands in the last display are equal.
		Therefore, fix any arrival point $x\in V$ for the two walks, and define the events
		\begin{align*}
			\cN^{(i)}_x\eqdef\{Z^{(i)}_t=x \}\comma\qquad i=1,2\fstop
		\end{align*}
		We now show
		\begin{align}\label{eq:nx}
			\P^{\rm an}\ttonde{\cN_x}=\tonde{1+o(1)}\frac{1}{n^2}\frac{r}{r-1}\comma
		\end{align}
		from which \cref{eq:first-moment} follows (combine \cref{eq:nx} with \cref{eq:first-moment-an,eq:first-moment-an-sum}).
		The proof of \cref{eq:nx} goes through the following steps:
		\begin{itemize}
			\item 
			Consider  the event for $Z^{(1)}$ of arriving at $x\in V$  \emph{performing a loop}, i.e.,
			\begin{align*}\cN^{(1),{\rm bad}}_x\eqdef \cN^{(1)}_x\cap \cL^{(1)}\eqdef \cN^{(1)}_x\cap\{Z^{(1)}_s=Z^{(1)}_{s'}\ \text{for some}\ s< s' \le t \}\comma
			\end{align*}
			and let
			\begin{align*}\cN_x^{(1),{\rm good}}\eqdef \cN^{(1)}_x\setminus \cN^{(1),{\rm bad}}_x
			\end{align*}
			denote the event that $x$ was hit at time $t$ \emph{without loops}.
			In order to estimate $\P^{\rm an}\ttonde{\cN_x^{(1),{\rm bad}}}$, we further distinguish the case in which $x$ was ever hit before time $t$; thus, letting $[Z^{(1)}]\eqdef \{Z_0^{(1)},\ldots,Z_{t-1}^{(1)}\}$ and $\cH^{(1)}_x\eqdef \{x\in [Z^{(1)}]\}$,  
			\begin{align}\label{eq:n1bad}\begin{aligned}
					\P^{\rm an}\ttonde{\cN^{(1),{\rm bad}}_x}	
					&\le \P^{\rm an}\ttonde{\cN^{(1)}_x\mid  (\cH^{(1)}_x)^\complement \cap \cL^{(1)}} \P^{\rm an}\ttonde{\cL^{(1)}}+ \P^{\rm an}\ttonde{\cN^{(1)}_x\cap \cH^{(1)}_x}\\
					&\qquad\le \frac{1}{n}\times \frac{t^2}{n} + 	\frac{t^3}{n^2}\le 	\frac{2t^3}{n^2}\fstop
				\end{aligned}
			\end{align} 
			Indeed,  $\P^{\rm an}\ttonde{\cN^{(1)}_x\mid  (\cH^{(1)}_x)^\complement \cap \cL^{(1)}} \le \frac{1}{n}$ holds because the event requires to connect to vertex $x$ at time $t$; $\P^{\rm an}\tonde{\cL^{(1)}}\le \frac{t^2}{n}$ comes from estimating by a union bound the probability of the event that, within  $t$ steps, the walk ever hits  one of the previously visited vertices, which are at most $t$. Finally, $\P^{\rm an}\tonde{\cN_x^{(1)}\cap \cH_x^{(1)}}$ is estimated by the probability that the walk  visits $x$ for the first time within time $t-1$ (this occurs with probability less than $\frac{t}{n}$), and then visits one of the vertices which have been previously visited (this \textcolor{black}{happens} with probability less than $\frac{t^2}{n}$).
			\item By an analogous argument and \cref{eq:n1bad},  we obtain
			\begin{align}\label{eq:n1bad-n2}\P^{\rm an}\ttonde{\cN_x^{(1),\rm bad}\cap \cN_x^{(2)}}\le \P^{\rm an} \(\cN_x^{(1),\rm bad}\)\times  \frac{t^2}{n}\le \frac{2t^5}{n^3}\fstop
			\end{align}
			\item\label{it:3(2.6)} We now estimate $\P^{\rm an}\ttonde{\cN^{(1),{\rm good}}_x\cap \cN^{(2)}_x}$. Under $\cN_x^{(1),{\rm good}}$, the second walk $Z^{(2)}$ can reach the same $x\in V$ at time $t$ in either one of the following two ways:
			\begin{itemize}
				\item\label{it:3(a)} $Z^{(2)}$ hits the trajectory of the first walk for the first time at time $s \le t$ \textcolor{black}{in the unique vertex that is at} distance $t-s$ from $x$, and then follows the same path: letting $\{Z^{(1)}\}\eqdef \{Z^{(1)}_0,\ldots, Z^{(1)}_t\}$,
				\begin{align*}&\cN^{(2),{\rm good}}_x\eqdef \bigsqcup_{s=0}^t \{Z^{(2)}_{s'}\notin \{Z^{(1)}\}\ \text{for all}\ 0\le s'< s\}\cap\{Z^{(1)}_{s'}= Z^{(2)}_{s'}\ \text{for all}\ s\le s'\le t\}\fstop
				\end{align*}
				Then, since $t=\log^3(n)$,
				\begin{align}\label{eq:n1good-n2good}
					\begin{aligned}
						\P^{\rm an}\ttonde{\cN^{(1),{\rm good}}_x\cap \cN^{(2),{\rm good}}_x}&= 	\P^{\rm an}\ttonde{ \cN^{(2),{\rm good}}_x\mid \cN^{(1),{\rm good}}_x}\tonde{1+o(1)}\frac{1}{n}\\
						&=\tonde{\sum_{s=0}^{t}\tonde{1+O\tonde{\frac{t}{n}}}^{s}\frac{1}{n}\tonde{\frac{1}{r}}^{t-s}} \tonde{1+o(1)}\frac{1}{n}\\
						&=\frac{1}{n^2}\tonde{ \frac{r}{r-1}+o(1)}\fstop
					\end{aligned}
				\end{align}
				\textcolor{black}{where the first asymptotic equality follows from the definition of $\cN^{(1),{\rm good}}_x$ and \cref{eq:n1bad}.}
				\item \label{it:3(b)} $Z^{(2)}$ hits at some time the path of the first walk, exits at least once the path, and eventually re-enters that same path: recalling $\{Z^{(1)}\}\eqdef \{Z^{(1)}_0,\ldots, Z^{(1)}_t\}$,
				\begin{align*}
					\cN^{(2),{\rm bad}}_x\eqdef\{Z_{s_1}^{(2)}, Z_{s_3}^{(2)}\in \{Z^{(1)}\}\comma Z_{s_2}^{(2)}\notin \{Z^{(1)}\}\comma \text{for some}\ 0\le s_1<s_2<s_3\le t  \}\fstop
				\end{align*}
				Note that $\cN^{(2),{\rm bad}}_x\neq \ttonde{\cN^{(2)}_x\setminus \cN^{(2),{\rm good}}_x}$, but
				\begin{align*}\cN^{(1),{\rm good}}_x\cap \ttonde{\cN_x^{(2)}\setminus \cN^{(2),{\rm good}}_x}\subset \cN^{(1),{\rm good}}_x\cap\cN^{(2),{\rm bad}}_x  \fstop
				\end{align*}
				Hence, we obtain 
				\begin{equation}\label{eq:n1good-n2bad}
					\begin{aligned}
						\P^{\rm an}\ttonde{\cN^{(1),{\rm good}}_x\cap \ttonde{\cN_x^{(2)}\setminus \cN^{(2),{\rm good}}_x}}
						&\le \P^{\rm an}(\cN^{(1),{\rm good}}_x\cap\cN^{(2),{\rm bad}}_x)\\
						&\le (1+o(1))\frac{1}{n}\times \frac{t^2}{n}\times \frac{2 t^2}{n}=o\tonde{\frac{1}{n^2}}\fstop
					\end{aligned}
				\end{equation}
			\end{itemize} 
			\item  In conclusion, since
			\begin{align*}
				\P^{\rm an}\ttonde{\cN_x^{(1),{\rm good}} \cap \cN_{x}^{(2),{\rm good}}}\le	\P^{\rm an}\ttonde{\cN_x}&\le \P^{\rm an}\ttonde{\cN_x^{(1),{\rm good}} \cap \cN_{x}^{(2),{\rm good}}}\\
				&\qquad+\P^{\rm an}\ttonde{ \cN_x^{(1),{\rm bad}}\cap \cN_x^{(2)}}\\
				&\qquad+ \P^{\rm an}\ttonde{\cN_x^{(1),{\rm good}}\cap \cN_x^{(2),{\rm bad}}}\comma
			\end{align*}
			the estimates in \cref{eq:n1bad-n2,eq:n1good-n2good,eq:n1good-n2bad} show the validity of \cref{eq:nx}.
		\end{itemize}

		This concludes the proof of \cref{eq:first-moment}; we now prove \cref{eq:second-moment} using \cref{eq:second-moment-an}. In analogy with \cref{eq:N}, define
		\begin{align*}
			\cM\eqdef \bigsqcup_{y\in V}\cM_y\eqdef \bigsqcup_{y\in V}\{Z^{(3)}_t=Z^{(4)}_t=y\}\comma
		\end{align*}
		and note that, by symmetry,
		\begin{align}\label{eq:N-M}
			\P^{\rm an}\ttonde{\cM}=\P^{\rm an}\ttonde{\cN}= \sum_{x\in V}\P^{\rm an}\tonde{\cN_x}= \E\quadre{Y}\fstop
		\end{align}
		Define further the following events:
		\begin{align*}\cM^{\rm bad}\eqdef\cM\cap \{Z^{(i)}_s=Z^{(j)}_{s'} \ \text{for some}\  i\in \{1,2 \},j \in\{3,4\}, s,s'\in\{0,\dots,t \} \}\comma
		\end{align*}
		and  
		$\cM^{\rm good}\eqdef\cM \setminus \cM^{\rm bad}$.
		Then,
		\begin{align}\label{eq:second-moment-an-sum-good-bad}
			\E[Y^2]&=\sum_{x\in V}\P^{\rm an}\ttonde{\cN_x \cap \cM}=\sum_{x\in V}\P^{\rm an}\ttonde{\cN_x \cap \cM^{\rm good}}+\sum_{x\in V}\P^{\rm an}\ttonde{\cN_x \cap \cM^{\rm bad} } \fstop	\end{align}
		As for the second sum above, we have
		\begin{align}\label{eq:N-M-bad}\sum_{x\in V}\P^{\rm an}\ttonde{\cN_x \cap \cM^{\rm bad} }\le n\times \frac{3}{n^2}\times\( \frac{2t^2}{n}\times \frac{3t^2}{n} + \frac{2t^2}{n}\times \frac{t^2}{n} \)=o\tonde{\frac{1}{n^{2.5}}}\fstop
		\end{align}
		\textcolor{black}{where the factor $n$ comes from the sum and the symmetry of the model, the term $3/n^2$ follows from \cref{eq:nx}, and the term within brackets is an estimate of $\P^{\rm an}\ttonde{ \cM^{\rm bad} \mid \cN_x }$. For the latter we argue as follows: either the walk $Z^{(3)}$ hits one of the trajectories of $Z^{(1)}$ or $Z^{(2)}$ and, subsequently the walk $Z^{(4)}$ ends at the same point as $Z^{(3)}$; or   the walk $Z^{(3)}$ does \emph{not} hit  $Z^{(1)}\cup Z^{(2)}$ and, subsequently, the walk $Z^{(4)}$ hits \emph{both} $Z^{(1)}\cup Z^{(2)}$ \emph{and} $Z^{(3)}$ (which at this point will be disjoint from $Z^{(1)}\cup Z^{(2)}$).}
		For what concerns the first sum, we argue as follows: call $\sigma\in\cN_x$ a realization of the paths of  $Z^{(1)}$ and $Z^{(2)}$ realizing $\cN_x$. For such a $\sigma$, call $\cM^{\rm good}(\sigma)$ the set of paths of $Z^{(3)}$ and $Z^{(4)}$  realizing $\cM$ \emph{and}  not intersecting  $\sigma$. Then, 
		\begin{align*}
			\P^{\rm an}\ttonde{\cN_x \cap \cM^{\rm good}}&=\sum_{\sigma\in\cN_x}\sum_{\eta\in \cM^{\rm good}(\sigma)}\P^{\rm an}(\sigma)\P^{\rm an}(\eta\mid \sigma)\\
			&=\sum_{\sigma\in\cN_x}\sum_{\eta\in \cM^{\rm good}(\sigma)}\P^{\rm an}(\sigma)\P^{\rm an}(\eta)\\
			&\le \P^{\rm an}(\cN_x)\P^{\rm an}(\cM)\fstop
		\end{align*}
		By combining this with  \cref{eq:second-moment-an-sum-good-bad,eq:N-M-bad,eq:N-M}, we get  	 
		\begin{equation}
			\E[Y^2]\le  \E[Y]^2 + o(n^{-2.5})\comma
		\end{equation}
		and, thus, \cref{eq:second-moment}. This concludes the proof of the lemma.
	\end{proof}
	\subsection{Mixing of auxiliary chain}\label{suse:mixing}
	\begin{color}{black}
		This section is devoted to the proof of the fact that  the auxiliary chain mixes, w.h.p., within time $S=\log^3(n)$. 
		More precisely, recalling the event $\cA_4$ in \cref{hp:mixing}, we show:
		\begin{proposition}\label{prop:mix-diag}
			$\lim_{n\to \infty}\P(\cA_4)=1$.
		\end{proposition}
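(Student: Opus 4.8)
The plan is to verify $\cA_4$ directly, by showing that, w.h.p., for every $\xi\in\tilde V$ the total-variation distance $\|\tilde P^t(\xi,\cdot)-\tilde\pi\|_{\rm TV}$ is $o(1)$ already at some time $t\le S$; since this distance is non-increasing in $t$, it then remains below $\varepsilon$ up to time $S$. Two reductions are used throughout. First, writing $\varphi\colon V^2\to\tilde V$ for the collapsing map, one has $\varphi_\ast(\pi\otimes\pi)=\tilde\pi$ exactly (cf.\ \cref{def:pitilde}), with atom $\tilde\pi(\Delta)=\Theta(1/n)=o(1)$ by \cref{hp:pidelta}; hence it is enough to push $\mathrm{law}(\Xi_t)$ close to $\pi\otimes\pi$. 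Second, fix an intermediate scale $S_0\eqdef\log^{5/2}(n)$, so that $t_{\rm mix}(P)=O(\log n)\ll S_0=o(S)$; by \cref{th:single_rw} and submultiplicativity of the mixing time, w.h.p.\ $\max_{x,y\in V}|P^{S_0}(x,y)-\pi(y)|\le n^{-2}$, so the \emph{product} chain $\mathbf X^{(2)}$, started anywhere, is within $2n^{-2}$ of $\pi\otimes\pi$ after $S_0$ steps. The one structural observation we exploit repeatedly is that if $\Xi$ and $\mathbf X^{(2)}$ start from the same state in $V^2_{\not=}$ and read the same colours, they agree for all $t\le\tau_{\rm meet}$, the first entrance of $\mathbf X^{(2)}$ into $\Delta$.

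The core input --- which I would isolate as the three lemmas \cref{lemma:mu+,lemma:mu-xy-new,prop:next-meeting} --- is a \emph{no quick re-meeting} estimate. Let $\mu_+$ be the normalized exit law of $\Xi$ from $\Delta$, i.e.\ $\mu_+\propto\tilde P(\Delta,\cdot)\restr{V^2_{\not=}}$; concretely, $\mu_+$ is the law of two walks placed at distinct out-neighbours of a common vertex $Z$ sampled with probability $\propto\pi(z)^2$. For $(x,x')\in V^2_{\not=}$, let $\mu^-_{x,x'}$ be the law of $\mathbf X^{(2)}_1$ started at $(x,x')$ and conditioned on $\{\tau_{\rm meet}>1\}$. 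I would prove that, w.h.p.,
\[
\Prob_{\mu_+}\ttonde{\tau_{\rm meet}>S}=1-o(1)\comma\qquad\min_{(x,x')\in V^2_{\not=}}\Prob_{\mu^-_{x,x'}}\ttonde{\tau_{\rm meet}>S}=1-o(1)\fstop
\]
Past $t_{\rm mix}(P)$ both walks have marginals within $n^{-2}$ of $\pi$, so the per-step meeting probability is at most $\sum_z\pi(z)^2+O(n^{-2})=O(1/n)$, contributing only $O(S/n)=o(1)$ to the sum. The delicate window is $t\le t_{\rm mix}(P)=O(\log n)$: there one must show that two walks that have just separated explore essentially disjoint, tree-like neighbourhoods, so that a re-meeting would force a short cycle through the common ancestor $Z$ (resp.\ through $x$ or $x'$). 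For $\mu_+$ the averaging over $Z\sim\pi^2$ is essential --- by \cref{hp:pimax} a single vertex carries $\pi^2$-mass $O(\log^{16}(n)/n^2)$, so the vertices lying on short cycles carry $\pi^2$-mass $o(\tilde\pi(\Delta))$ and contribute $o(1)$; for $\mu^-_{x,x'}$ the single extra step already randomizes the two locations, since it removes the only source of non-vanishing badness --- the pair $(x,x')$ can share up to $2$ out-neighbours w.h.p.\ (a first-moment bound), making $\Prob_{(x,x')}(\tau_{\rm meet}=1)$ as large as $2/r^2$, whereas the locations reached after one step have essentially fresh, independent out-neighbourhoods. The rigorous implementation reuses the annealed random-walk construction from the proof of \cref{lemma:A2}, now run with two coupled walks, to control loops and early intersections; this is the main obstacle of the section.

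Granting these estimates, take first $\xi=\Delta$. Since $\tilde P(\Delta,\Delta)=1/r$ --- every vertex has exactly $r$ distinct out-neighbours --- the chain stays at $\Delta$ for a $\mathrm{Geom}(1-1/r)$ time, hence w.h.p.\ it first leaves $\Delta$ at some time $\tau_1\le\log^2(n)=o(S_0)$, at a state with law $\mu_+$ (independently of $\tau_1$). By the first estimate, with probability $1-o(1)$ the chain then avoids $\Delta$ throughout $[\tau_1,S]$, so on that event $\Xi_S=\mathbf X^{(2)}_{S-\tau_1}$ with $S-\tau_1\ge S-\log^2(n)\gg t_{\rm mix}(P)$; since this $(1-o(1))$-probable conditioning costs $o(1)$ in total variation and the product chain has mixed, $\mathrm{law}(\Xi_S)$ lies within $o(1)$ of $\varphi_\ast(\pi\otimes\pi)=\tilde\pi$. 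For $\xi=(x,x')\in V^2_{\not=}$ I condition on the first step: on $\{\tau_{\rm meet}=1\}$, whose probability is $\frac1{r^2}|\img f_x\cap\img f_{x'}|\le 2/r^2$ w.h.p., the chain sits at $\Delta$ at time $1$, and the case just treated (with horizon $S-1$) gives $\mathrm{law}(\Xi_S\mid\tau_{\rm meet}=1)$ within $o(1)$ of $\tilde\pi$; on the complementary event the time-$1$ state has law $\mu^-_{x,x'}$, and the second estimate together with the product-chain mixing gives $\mathrm{law}(\Xi_S\mid\tau_{\rm meet}>1)$ within $o(1)$ of $\tilde\pi$. Averaging the two conditional laws bounds $\|\tilde P^S((x,x'),\cdot)-\tilde\pi\|_{\rm TV}$ by $o(1)<\varepsilon$ for $n$ large, which is \cref{hp:mixing}; this completes the proof.

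Finally, it is worth noting why the naive approach fails: one cannot couple two copies of $\Xi$ until their pairs of walks coalesce, because coalescence of two walks on the random DFA takes time $\Theta(n)\gg S$. The auxiliary chain is engineered to \emph{reset} at $\Delta$, so re-meetings are not an obstruction --- they merely return the chain to the single state $\Delta$, from which mixing restarts --- provided they do not occur before the \emph{single} walk has mixed; ruling out such premature re-meetings is exactly the content of the no quick re-meeting estimate above.
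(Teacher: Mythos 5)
Your treatment of the off-diagonal starting states contains a genuine gap. The second of your two ``no quick re-meeting'' estimates --- that, w.h.p., $\min_{(x,x')\in V^2_{\not=}}\Prob_{\mu^-_{x,x'}}\ttonde{\tau_{\rm meet}>S}=1-o(1)$ --- is false. Conditioning on $\{\tau_{\rm meet}>1\}$ only excludes shared out-neighbours of $x$ and $x'$ themselves; it does not prevent an out-neighbour $y$ of $x$ and an out-neighbour $y'$ of $x'$ from sharing an out-neighbour, in which case $\Prob_{\mu^-_{x,x'}}\ttonde{\tau_{\rm meet}=2}\ge c(r)>0$. A first-moment computation shows that the expected number of pairs $(x,x')$ admitting such a depth-two collision is of order $n r^4$, so w.h.p.\ there are many such pairs (and analogous obstructions occur at depths $3,4,\dots$); hence the minimum over all $n^2$ pairs cannot be $1-o(1)$, and your heuristic that ``one extra step already gives fresh out-neighbourhoods'' applies only to typical pairs, not to the worst case. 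Since your final step for $\xi=(x,x')$ splits only on $\{\tau_{\rm meet}=1\}$ versus $\{\tau_{\rm meet}>1\}$ and then invokes this uniform estimate, the argument does not control the event --- of constant probability for the bad pairs --- that the chain returns to $\Delta$ at some time in $\{2,\dots,S\}$, and in particular at a time close to $S$, when too little time remains to re-mix from $\Delta$.

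The repair is to avoid asking for any bound that is uniform over the starting pair. The paper's proof decomposes, via the strong Markov property at the hitting time $\tau_\Delta$ of the diagonal, the discrepancy between the auxiliary and the product chain (cf.\ \cref{eq:eq0,eq:eq}): meetings occurring after time $S/2$ are ruled out in aggregate, their total probability being at most $S$ times the sum of the product-chain TV-distance to $\pi^{\otimes 2}$ and $\tilde\pi(\Delta)$, which is $o(1)$ by \cref{eq:cutoff} and \cref{hp:pidelta}; meetings before $S/2$ are harmless because from $\Delta$ the chain exits after a ${\rm Geom}(1-1/r)$ time with exit law $\mu_+$, and the only ``no quick re-meeting'' input needed is the $\mu_+$-averaged one of \cref{prop:next-meeting} --- exactly your first estimate, which is correct and which you prove along the same lines as \cref{lemma:mu-xy-new,lemma:mu+}. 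Your handling of the start $\xi=\Delta$ is essentially the paper's; it is the reduction of a worst-case off-diagonal start to a single conditioning on the first step that fails.
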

	\end{color}
	Recall that \textcolor{black}{the auxiliary} and the product chain can be perfectly coupled as long as the two walks do not sit on the same vertex. Nevertheless, \textcolor{black}{despite the fact that} the analogue of \cref{prop:mix-diag} for the product chain is an immediate corollary of \cref{eq:cutoff}, establishing this for the auxiliary chain requires a finer analysis on the visits to the diagonal.
	
	We divide the proof of \cref{prop:mix-diag} into several intermediate steps (\cref{lemma:mu-xy-new,lemma:mu+,prop:next-meeting}), and present the concluding arguments at the end of this section.
	
	\begin{color}{black}
		As a first step we show that, conditionally on having a DFA in which $x$ and $x'$ have a common in-neighbor, the probability that  the random DFA has the property that  two independent walks starting at $(x,x')$ meet in a short time is small.
	\end{color}
	
	\begin{lemma}\label{lemma:mu-xy-new}
		For every sequence $(x,x')=(x_n,x_n')\in V^2_{\not=}$,  let
		\begin{equation}\label{eq:Exx}
			\cE_{x,x'}\eqdef\{x \text{ and } x' \text{ have (at least) a common in-neighbor in $G$} \}\fstop
		\end{equation}
		Then, for every $t=t_n\ge 1$ and $\epsilon=\epsilon_{n}>0$,
		\begin{equation}\label{eq:lemma-mu-xy-new}
			\P\ttonde{\Prob_{(x,x')}\ttonde{\tau_{\rm meet}<t}> \epsilon\mid \cE_{x,x'}}\le \frac8\epsilon\:\frac{\log^2(n)\:t^2}{n}\fstop
		\end{equation}
	\end{lemma}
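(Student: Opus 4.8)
\emph{Proof proposal.} The plan is to convert the statement into a first-moment bound via Markov's inequality, and then to estimate the resulting joint probability of meeting and of having a common in-neighbour by an annealed trajectory computation of the same nature as the proof of \cref{lemma:A2}.

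Since $\Prob_{(x,x')}(\tau_{\rm meet}<t)$ is a $[0,1]$-valued measurable function of the random DFA $G$, the conditional Markov inequality gives
\[
\P\ttonde{\Prob_{(x,x')}(\tau_{\rm meet}<t)>\epsilon\mid\cE_{x,x'}}\le\tfrac1\epsilon\,\E\quadre{\Prob_{(x,x')}(\tau_{\rm meet}<t)\mid\cE_{x,x'}}=\tfrac{1}{\epsilon\,\P(\cE_{x,x'})}\,\E\quadre{\Prob_{(x,x')}(\tau_{\rm meet}<t)\,\ind_{\cE_{x,x'}}}\fstop
\]
So it suffices to prove that (a) $\P(\cE_{x,x'})\ge\tfrac1{2n}$ for $n$ large, and (b) $\E\quadre{\Prob_{(x,x')}(\tau_{\rm meet}<t)\,\ind_{\cE_{x,x'}}}=O(t^2/n^2)$; since $r$ is fixed, plugging these into the display gives the assertion (the stated $\tfrac8\epsilon\tfrac{\log^2(n)\,t^2}{n}$ is comfortable slack that also absorbs the lower-order corrections in the annealed estimates over the relevant range, and is anyway trivially true whenever its right-hand side exceeds $1$, so one may restrict to $t=o(\sqrt n)$ throughout). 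Claim (a) is immediate: the events $\{z\to x\text{ and }z\to x'\}$, $z\in V\setminus\{x,x'\}$, are mutually independent — they involve the disjoint maps $f_z$ — and each has probability $\tfrac{r(r-1)}{n(n-1)}\ge\tfrac2{n^2}$, whence $\P(\cE_{x,x'})\ge1-\ttonde{1-\tfrac{r(r-1)}{n(n-1)}}^{n-2}\ge\tfrac1{2n}$ eventually.

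For (b) I would union-bound over the common in-neighbour and then condition on it:
\[
\E\quadre{\Prob_{(x,x')}(\tau_{\rm meet}<t)\,\ind_{\cE_{x,x'}}}\le\sum_{z\in V}\frac{r(r-1)}{n(n-1)}\,\E\quadre{\Prob_{(x,x')}(\tau_{\rm meet}<t)\mid z\to x,\ z\to x'}\fstop
\]
Conditioning on $\{z\to x,\ z\to x'\}$ merely prescribes the images of two colours of $f_z$, and leaves $\{f_v\}_{v\ne z}$ — hence the portion of the environment that either walk reveals before it first visits $z$ — distributed as in an unconditioned DFA. Fix $z\notin\{x,x'\}$ and split the meeting event according to whether some walk visits $z$ within $t$ steps. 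On the complement, the two trajectories and the indicator that no walk visits $z$ are functions of $\{f_v\}_{v\ne z}$ and the two driving words alone (a walk first reaches $z$ through out-edges of vertices other than $z$), so this part contributes at most the unconditioned annealed meeting probability; using $\{\tau_{\rm meet}<t\}\subseteq\{\text{the two trajectories of length }t\text{ intersect}\}$ this is $\le\sum_{0\le s,s'<t}\E\quadre{\sum_{v\in V}P^{s}(x,v)P^{s'}(x',v)}$, and each summand is the annealed probability that two independent annealed walks started from two distinct vertices sit at a common vertex at times $s$ and $s'$ respectively, which is $O(1/n)$ by exactly the loop-free accounting used for $\E[Y]$ in the proof of \cref{lemma:A2}, so this part is $O(t^2/n)$. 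On the event that a walk visits $z$ within $t$ steps — also a function of $\{f_v\}_{v\ne z}$ and the word, hence of unchanged probability under the conditioning — the bound $\E\quadre{P^s(x,z)}\le\tfrac1{n-1}$ for $z\ne x$ (by exchangeability of $G$ over $V\setminus\{x\}$), combined with a union bound over $s<t$ and the two walks, gives $O(t/n)$. Thus $\E\quadre{\Prob_{(x,x')}(\tau_{\rm meet}<t)\mid z\to x,\ z\to x'}=O(t^2/n)$ for $z\notin\{x,x'\}$, while for the two remaining values $z\in\{x,x'\}$ I simply use the trivial bound $\le1$; summing over $z$ gives $(n-2)\tfrac{r(r-1)}{n(n-1)}\,O(t^2/n)+2\tfrac{r(r-1)}{n(n-1)}=O(t^2/n^2)$, which is (b).

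The genuinely delicate point — and what the statement is really about — is disentangling the conditioning from the forward dynamics: a common in-neighbour $z$ of $x$ and $x'$ is an \emph{out-shortcut} running from $z$ to \emph{both} starting points, so one must argue that it neither helps the two walks meet before either of them reaches $z$ (this is where the independence of $\{f_v\}_{v\ne z}$ from $f_z$ is crucial) nor is reached with non-negligible probability (the $O(t/n)$ hitting-time bound), and that the two degenerate cases $z\in\{x,x'\}$ — where the conditioning prescribes out-edges of a \emph{starting} vertex, e.g.\ forcing a self-loop at $x$ together with an edge $x\to x'$ — are harmless only because $\P(x\to x,\,x\to x')$ is itself $O(1/n^2)$. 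All that remains is the routine loop-free annealed bookkeeping already carried out in the proof of \cref{lemma:A2}, now run for $t$ steps.
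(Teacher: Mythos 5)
Your route is genuinely different from the paper's. The paper samples $\P(\cdot\mid\cE_{x,x'})$ by resampling the whole in-neighbourhood of $\{x,x'\}$ (the Bernoulli construction in its proof), controls the size of that in-neighbourhood by a Chernoff bound --- which is exactly where the $\log^2(n)$ in the statement comes from --- discards the event that $x$ or $x'$ carries a prescribed out-edge, and then runs an annealed exploration of the two walks in the presence of this revealed partial environment. You instead write $\P(\cdot\mid\cE_{x,x'})\le\frac{1}{\epsilon\,\P(\cE_{x,x'})}\,\E\quadre{\cdot\,\ind_{\cE_{x,x'}}}$, union-bound over the common in-neighbour $z$, and use that conditioning on $\{z\to x,\,z\to x'\}$ biases only $f_z$, so that on the event that neither walk visits $z$ the dynamics is the unconditioned one. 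This decomposition is sound (the killed-at-$z$ trajectories are measurable with respect to $\{f_v\}_{v\ne z}$ and the driving words), your estimates $\P(\cE_{x,x'})\ge\frac{1}{2n}$ and $\E[P^s(x,z)]\le\frac{1}{n-1}$ are correct, the degenerate cases $z\in\{x,x'\}$ are handled properly, and a completed argument along these lines would in fact give the lemma \emph{without} the $\log^2(n)$ factor.

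The one step that does not hold as written is the claim that each summand $\E\quadre{\sum_v P^s(x,v)P^{s'}(x',v)}$ is $O(1/n)$ uniformly in $s,s'<t$ ``by exactly the loop-free accounting used for $\E[Y]$'' in \cref{lemma:A2}. That accounting is performed for $t=\log^3(n)$, and its correction terms (the first walk closing a loop, the second walk leaving and re-entering the first walk's range) are additive errors of order $t^2/n\times t^2/n\asymp t^4/n^2$ per summand; these exceed $1/n$ as soon as $t\gg n^{1/4}$, and after summing over $(s,s')$ they are no longer absorbed by the $\log^2(n)\,t^2/n$ slack once $t^4\gtrsim n\log^2(n)$. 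Since your trivial-regime reduction only restricts to $t=o(\sqrt n)$, the asserted $O(t^2/n)$ bound for the ``no visit to $z$'' part is unjustified in the range $n^{1/4}\ll t\ll \sqrt n/\log n$. The repair is immediate and keeps your architecture intact: drop the decomposition over $(s,s')$ and bound the annealed probability that the two $t$-step trajectories intersect directly --- having revealed the first walk's range (at most $t+1$ vertices), the second walk can make its first entry into that range only through a fresh edge, which at each step occurs with probability at most $(t+1)/(n-r+1)$, so the intersection probability is at most $2t^2/n$ for $n$ large; this is precisely the bound the paper itself uses in \cref{eq:thelatter} in the proof of \cref{th:2}. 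With that substitution your proof goes through and is, if anything, tidier than the paper's, since it needs no control of the in-degree of $\{x,x'\}$ and loses no logarithmic factor.
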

	\begin{proof}
		Note that $\P\ttonde{\emparg\mid \cE_{x,x'}}$ can be sampled as follows:
		\begin{enumerate}
			\item \label{it:an1} To each vertex $y\in V$ attach  two Bernoulli random variables, $W_{x}^y$ and $W_{x'}^y$, having the following joint law:
			\begin{align}\nonumber
				\Pr\(W_{x}^y=0,W_{x'}^y=0 \)&=
				\frac{\binom{n-2}{r}}{\binom{n}{r}} \\
				\label{eq:in-neigh_construction}
				\Pr\(W_{x}^y=1,W_{x'}^y=1 \)&=\frac{\binom{n-2}{r-2}}{\binom{n}{r}}\\
				\nonumber
				\Pr\(W_{x}^y=1,W_{x'}^y=0 \)&=\Pr\(W_{x}^y=0,W_{x'}^y=1 \)=\frac{\binom{n-2}{r-1}}{\binom{n}{r}}\fstop
			\end{align} 
			(Here, \textquotedblleft$W^y_x=1$\textquotedblright\ corresponds to constructing the directed edge $y\to x$ endowed with a random color.)
			\item \label{it:an2} If $W_{x}^{y}+W_{x'}^y\neq 2$ for all $y\in V$, then resample all variables $W$'s, restarting from \cref{it:an1}.
			\item \label{it:an3} For  $y\in V$, if $W_x^y=1$, then connect $y\to x$ and assign this edge a random color, and similarly for $W_{x'}^y$; if $W^y_{x}+W^{y}_{x'}=2$, color the corresponding two edges with two distinct random colors. 
			\item \label{it:an5} Call $\sigma$ the partial environment generated so far (\cref{it:an1,it:an2,it:an3}).
			\item Complete the rest of the random DFA: construct a colored digraph $G'$ with the $n-2$ vertices in $V\setminus\{x,x' \}$, and out-degrees $d_y^+=r-W_{x}^y-W_{x'}^y$ for all $y\in V\setminus\{x,x'\}$.
			\item Call $G=G'\cup \sigma$ the resulting DFA.
		\end{enumerate}
		
		Let $\cF_{x,x'}$ be the event that $\sigma_0=\sigma$ constructed in \cref{it:an1,it:an2,it:an3,it:an5} has no arrows outgoing $x$ nor $x'$. We now show that there exists $C'=C'(r)>0$ such that 
		\begin{align}\label{eq:toprove}
			\P\tonde{\cF_{x,x'}^\complement\mid \cE_{x,x'}}\le\frac{C'}{n}\fstop
		\end{align}
		The proof of \cref{eq:toprove} goes as follows. Let $\{y\not\to z\}$ denote the event that no arrow from $y$ points to $z\in V$; then,
		\begin{align*}\P\tonde{\{x\not\to x\} \cap \{x\not\to x'\} }=\(1-\frac{2}{n} \)\(1-\frac{2}{n-1} \)\cdots\(1-\frac{2}{n-r+1} \)\ge1-\frac{3r}{n}\comma
		\end{align*}
		where the last estimate holds for all $n$ sufficiently large. Therefore, by independence,
		\begin{align}\label{eq:step1}
			\P\tonde{\cF_{x,x'}}\ge \(1-\frac{3r}{n}\)^2\ge 1-\frac{6r}{n}\fstop
		\end{align}
		Recall further that
		\begin{align}\label{eq:step2}
			\P\(\cE_{x,x'}^c \)=\P\tonde{\bigcap_{y\in[n]} \left\{\{y\not\to x\}\cup \{y\not\to x' \}\right\} }=\(1-\frac{\binom{n-2}{r-2}}{\binom{n}{r}} \)^n=1-\Theta(n^{-1})\fstop
		\end{align}
		By  the bound in \cref{eq:step2}, we estimate the right-hand side in \cref{eq:toprove} as follows:	
		\begin{equation}\label{eq:step3}
			\P\(\cF_{x,x'}^c\mid \cE_{x,x'} \)=\frac{\P\( \cF_{x,x'}^c\cap \cE_{x,x'}\)}{\P(\cE_{x,x'})}\le O\(n\)\times \P\( \cF_{x,x'}^c\cap \cE_{x,x'}\)\fstop
		\end{equation}
		As a consequence,  \cref{eq:toprove} holds if we show
		\begin{equation}\label{eq:toprove2}
			\P\( \cF_{x,x'}^c\cap \cE_{x,x'}\)=O(n^{-2})\fstop
		\end{equation}
		To the purpose of proving \cref{eq:toprove2}, introduce  the event
		\begin{equation}
			\cG_{x,x'}=\(\{x\to x \}\cap\{x\to x' \}\) \cup  \(\{x'\to x \}\cap\{x'\to x' \}\)\comma
		\end{equation}
		and write
		\begin{equation}\label{eq:step3bis}
			\P\( \cF_{x,x'}^c\cap \cE_{x,x'}\)= \P\( \cF_{x,x'}^c\cap \cE_{x,x'}\cap \cG_{x,x'}\)+\P\( \cF_{x,x'}^c\cap \cE_{x,x'}\cap \cG_{x,x'}^c\)\fstop
		\end{equation}
		We now bound the two probabilities on the right-hand side above. On the one hand,
		\begin{equation}\label{eq:step4}
			\P\( \cF_{x,x'}^c\cap \cE_{x,x'}\cap \cG_{x,x'}\)= 	\P\( \cE_{x,x'}\cap \cG_{x,x'}\)\le  \P\(  \cG_{x,x'}\) \le 2\frac{\binom{n-2}{r-2}}{\binom{n}{r}}=O(n^{-2})\comma
		\end{equation}
		while, on the other hand,
		\begin{equation}\label{eq:step5}
			\P\( \cF_{x,x'}^c\cap \cE_{x,x'}\cap \cG_{x,x'}^c\)\le \P\( \cF_{x,x'}^c\)\P\( \cE_{x,x'}\mid \cG_{x,x'}^c\)\le \frac{6r}{n} (n-2)\frac{\binom{n-2}{r-2}}{\binom{n}{r}}=O(n^{-2})\fstop
		\end{equation}
		(In the last inequality we used \cref{eq:step1} and a union bound.) By plugging \cref{eq:step4,eq:step5} into \cref{eq:step3bis}, we deduce \cref{eq:toprove2}; by combining this and \cref{eq:step3}, we conclude the proof of  \cref{eq:toprove}.

		We now estimate the right-hand side of \cref{eq:lemma-mu-xy-new}. By \cref{eq:toprove}, we get 
		\begin{align}\label{eq:0}	\begin{aligned}	\P\tonde{\Prob_{(x,x')}\tonde{\tau_{\rm meet}<t}> \epsilon\mid \cE_{x,x'}}&\le \P\tonde{\Prob_{(x,x')}\tonde{\tau_{\rm meet}<t}> \epsilon\mid \cE_{x,x'},\cF_{x,x'}} + \frac{C'}{n}\\
				&\le  \frac{1}{\epsilon}\,	\E\quadre{\Prob_{(x,x')}(\tau_{\rm meet}<t)\mid \cE_{x,x'}, \cF_{x,x'}}+ \frac{C'}{n}\comma
			\end{aligned}
		\end{align}
		where the last step is a consequence of Markov inequality.
		In estimating the expectation on the right-hand side above, we rewrite it as
		\begin{equation}\label{eq:annealing}
			\E\quadre{\Prob_{(x,x')}(\tau_{\rm meet}<t)\mid \cE_{x,x'},\cF_{x,x'}}=\hat\P^{\rm an}(\tau_{\rm meet}^{x,x'}<t\mid \cE_{x,x'},\cF_{x,x'})\comma
		\end{equation}
		where $\hat\P^{\rm an}(\emparg \mid \cE_{x,x'},\cF_{x,x'})$ is the law of a non-Markovian process and $\tau^{x,x'}_{\rm meet}$ random variables constructed as follows:
		\begin{enumerate}[(i)]	
			\item\label{it:2nd-an1} Construct the partial environment $\sigma$ incoming $x$ and $x'$ as described in \cref{it:an1,it:an2,it:an3,it:an5}, and resample it if $\sigma\notin \cF_{x,x'}$.
			\item Start two  walks in $x$ and $x'$ and set $\sigma_0\coloneqq \sigma$.
			\item At each  time-step $s \ge 1$, given the environment $\sigma_{s-1}$, let the first walk choose independently and uniformly at random one of the $r$ colors; if the selected color has already been assigned a target state, then let the particle move to that state; if not, select a target independently and uniformly at random among those states that are not  already targeted by the state the walk sits at. Add that directed edge to the environment $\sigma_{s-1}$, calling this new environment $\sigma_{s-\frac{1}{2}}$. Given the environment $\sigma_{s-\frac{1}{2}}$, perform this same procedure for the second walk  and call $\sigma_s$ the environment finally generated from $\sigma_{s-\frac{1}{2}}$ and this procedure.
			\item Stop the process as soon as the two walks visit the same state at the same \emph{integer} time; call then $\tau^{x,x'}_{\rm meet} \in \N$ this time.
		\end{enumerate}
		
		We now provide an upper bound for the right-hand side of \cref{eq:annealing}. To this purpose, let $Y\coloneqq\{y_{1/2},y_1,\ldots,\ldots, y_{t-1/2},y_t\}$ denote the set of  states visited by  the two walks. Then, in order for the event $\{\tau^{x,x'}_{\rm meet}< t\}$ to occur,  $\abs{Y}<2t$ must hold. In order to estimate the latter event, fix some  $m>0$ and call, for all $j\le 2t$,
		\begin{align}\label{eq:Aj}
			\cJ_j\coloneqq \left\{\abs{\{y_{1/2},y_1,\ldots, y_{j/2}\}} = j  \right\} \bigcap \left\{\{y_{1/2},y_1,\ldots, y_{j/2}\}\cap \sigma = \emp\right\}\bigcap \left\{\abs{\sigma}\le m\right\}\fstop
		\end{align}
		(Here, with a slight abuse of notation, $\sigma$ indicates the vertices with at least one outgoing edge being revealed in \cref{it:2nd-an1}.)
		Since $\cJ_{2t} \subset \cJ_{2t-1} \subset \cdots \subset \cJ_1$, 
		\begin{equation}\label{eq:1ante}
			\begin{split}
				\hat\P^{\rm an}\tonde{\cJ_{2t}\mid \cE_{x,x'},\cF_{x,x'}}&=\hat \P^{\rm an}\tonde{\cJ_{2t}\mid \cJ_{2t-1}, \cE_{x,x'},\cF_{x,x'}}\hat\P^{\rm an}\tonde{\cJ_{2t-1}\mid \cE_{x,x'},\cF_{x,x'}}\\
				&\ge  \tonde{1-\frac{m+2t}{n}} \hat\P^{\rm an}\tonde{\cJ_{2t-1}\mid \cE_{x,x'},\cF_{x,x'}}
				\ge \cdots\\
				&\ge \tonde{1-4 \frac{mt^2}{n}}\hat\P^{\rm an}\tonde{\abs{\sigma} \le m\mid \cE_{x,x'},\cF_{x,x'}}\fstop
			\end{split}
		\end{equation}
		
		We are left to control $|\sigma|$, namely the sum of in-going connections of $x$ and $x'$, conditionally on $\cE_{x,x'}\cap \cF_{x,x'}$. Start by rewriting
		\begin{align}\label{eq:1}
			\hat\P^{\rm an}\tonde{\abs{\sigma} > m\mid \cE_{x,x'},\cF_{x,x'}}= \frac{\hat\P^{\rm an}\tonde{\abs{\sigma} > m,\: \cE_{x,x'},\:\cF_{x,x'}}}{ \hat\P^{\rm an}( \cE_{x,x'},\:\cF_{x,x'} )}\le \frac{\hat\P^{\rm an}\tonde{\abs{\sigma} > m}}{ \hat\P^{\rm an}( \cE_{x,x'},\:\cF_{x,x'} )} \fstop
		\end{align}
		By \cref{eq:toprove,eq:step2}, we have
		\begin{equation}
			\hat\P^{\rm an}( \cE_{x,x'},\:\cF_{x,x'} )=\Omega(n^{-1})\fstop
		\end{equation}
		We are left to estimate  the numerator on the right-hand-side of \cref{eq:1}.
		\textcolor{black}{Note that, without any conditioning, the sum of the in-going connections of $x$ and $x'$  satisfies,
			for all $n$ sufficiently large,}
		\textcolor{black}{
			\begin{align}\label{eq:1bis}
				\hat\P^{\rm an}\tonde{\abs{\sigma} > m}\le \Pr\tonde{{\rm Bin}\(2n,\frac{2 r}{n}\)> m}\,\qquad m\ge 0\fstop
		\end{align}}
		\textcolor{black}{Indeed, the number of in-going connections to any vertex is distributed as ${\rm Bin}(n,r/n)$; moreover, conditionally on the realization of the in-going connections of $x$, the number of in-going connections of $x'$ is dominated by ${\rm Bin}(n,2r/n)$.} 
		\textcolor{black}{
			Taking $m=4 r a$ for some $a=a_n>1$ to be fixed later, and using Chernoff bound, we obtain
			\begin{align}\label{eq:1tris}
				\hat\P^{\rm an}\tonde{\abs{\sigma} > m}\le \exp\(-\frac{4r(a-1)^2}{a+1} \)\fstop
			\end{align}
			Hence, by choosing, e.g., $a=\frac{\log^2(n)}{4r}$ (hence, $m=\log^2(n)$), we finally get, for every $c>0$, 
			\begin{equation}\label{eq:1quater}
				\hat\P^{\rm an}\tonde{\abs{\sigma} > \log^2(n)\mid \cE_{x,x'},\cF_{x,x'}}\le n^{-c}\fstop
		\end{equation}}
		In conclusion, by plugging \cref{eq:1quater} into \cref{eq:1ante}, we deduce
		\begin{align}\label{eq:2}
			\hat\P^{\rm an}\tonde{\cJ_{2t}^\complement\mid \cE_{x,x'},\cF_{x,x'}}\le 5\:\frac{\log^2(n)\:t^2}{n}\fstop
		\end{align}
		By combining $\{\tau_{\rm meet}^{x,x'}<t\}\subset \{|Y|<2t\}$, the definition of $\cJ_{2t}$ in \cref{eq:Aj}, we get
		\begin{align*}
			\hat\P^{\rm an}(\tau_{\rm meet}^{x,x'}<t\mid \cE_{x,x'},\cF_{x,x'})&\le \hat\P^{\rm an}(|Y|<2t\mid \cE_{x,x'},\cF_{x,x'}) \\
			&\le \hat\P^{\rm an}(\cJ_{2t}^\complement\mid \cE_{x,x'},\cF_{x,x'})	\\
			&\quad+\hat\P^{\rm an}(Y\cap \sigma \neq \emp, \abs{\sigma}\le \log^2(n)\mid \cE_{x,x'},\cF_{x,x'})\\
			&\quad+\hat\P^{\rm an}(\abs{\sigma}> \log^2(n)\mid \cE_{x,x'},\cF_{x,x'})  \\
			&\le 5\:\frac{\log^2(n)\:t^2}{n}  + \frac{ \log^2(n) 4t}{n}+n^{-2}\le 7\:\frac{\log^2(n)\:t^2}{n} \comma
		\end{align*}
		\begin{color}{black}
			where in the third line we used \cref{eq:1quater}, \cref{eq:2},  and the following observation: $|Y|\le2t$ and at each step of the construction the probability of creating a connection to $\sigma$ is boudned by $2|\sigma|/n$, for all $n$ large enough.
		\end{color}
		Combining this with \cref{eq:0,eq:annealing} yields the desired result.
	\end{proof}

	In what follows, we will need the following definitions related to the auxiliary chain $\Xi_t$:
	\begin{itemize}
		\item $\tau_\Delta$ ($\in \N$) denotes the first hitting time of the state $\Delta$;
		\item  $\tau_{\Delta,+}$ ($>\tau_\Delta$) denotes the first exit time  from the state $\Delta$ \emph{after} the first visit to $\Delta$;
		\item $\mu_+$ is the distribution on $\tilde V$ of $\Xi_{\tau_{\Delta,+}}$ under $\tilde \Prob_\Delta$. 
	\end{itemize} 
	By definition, $\mu_+(\Delta)=0$. Hence, $\mu_+$ is fully supported on $V^2_{\not=}$, thus, uniquely extends to a probability measure on $V^2$; moreover,
	\begin{equation}\label{eq:mu+}
		\mu_+((x,x'))=\frac{r}{r-1}\frac{\sum_{z\in V} \pi(z)^2\(P(z,x)P(z,x')\)}{\sum_{w\in V}\pi(w)^2}\comma\qquad (x,x')\in V^2_{\not=}\fstop
	\end{equation}
	Further, recalling \cref{eq:Exx}, the support of $\mu_+$ consists of the states $(x,x')\in V^2_{\not=}\subset \tilde V$ for which $\cE_{x,x'}$ holds. Finally, 
	\begin{itemize}
		\item $\varphi:V^2\to \tilde V$ is given by 
		\begin{equation}\label{eq:varphi}
			\varphi((x,x'))\eqdef \begin{cases}
				(x,x') &\text{if}\ (x,x')\in V^2_{\not=}\\
				\Delta &\text{else}\fstop
			\end{cases}
		\end{equation}
	\end{itemize}
	
		\begin{color}{black}
In words, the measure $\mu^+$ represent the exit distribution from the diagonal.
	\end{color}
In \cref{lemma:mu+,prop:next-meeting}, we prove some properties concerning the measure $\mu_+$ and the meeting time of two independent walks when initialized according to $\mu_+$. We start by providing an upper bound for the  maximum of $\mu_+$ which holds w.h.p.. 
	\begin{lemma}[Maximum of $\mu_+$]\label{lemma:mu+}
		W.h.p., 
		\begin{equation}\label{eq:max-mu+}
			\max_{\xi\in \tilde V}\mu_+(\xi)\le \frac{\log^{17}(n)}{n}\fstop
		\end{equation}
	\end{lemma}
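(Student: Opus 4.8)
The plan is to bound $\mu_+$ directly from its explicit formula \cref{eq:mu+}, combining the w.h.p.\ upper bound $\max_{x\in V}\pi(x)\le \log^8(n)/n$ from \cref{th:single_rw} with the stationarity identity $\pi P=\pi$. Since $\mu_+(\Delta)=0$ by definition, it suffices to control $\mu_+((x,x'))$ over $(x,x')\in V^2_{\not=}$, uniformly in the pair.

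First I would lower bound the denominator appearing in \cref{eq:mu+}: by Cauchy--Schwarz one has $\sum_{w\in V}\pi(w)^2\ge n^{-1}$ deterministically (this is exactly the inequality $\tilde\pi(\Delta)\ge n^{-1}$ already used in the proof of \cref{lemma:pi-diag}). Hence
\begin{equation*}
\mu_+((x,x'))\le \frac{r}{r-1}\,n\sum_{z\in V}\pi(z)^2\,P(z,x)\,P(z,x')\comma\qquad (x,x')\in V^2_{\not=}\fstop
\end{equation*}
Next, I would work on the w.h.p.\ event $\{\max_{x\in V}\pi(x)\le \log^8(n)/n\}$ granted by \cref{eq:max-pi}. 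On this event I would peel off a single factor $\pi(z)\le \log^8(n)/n$ from $\pi(z)^2$, bound $P(z,x')\le 1$, and then contract the remaining sum using stationarity, $\sum_{z\in V}\pi(z)P(z,x)=\pi(x)\le \log^8(n)/n$:
\begin{equation*}
\sum_{z\in V}\pi(z)^2\,P(z,x)\,P(z,x')\le \frac{\log^8(n)}{n}\sum_{z\in V}\pi(z)\,P(z,x)=\frac{\log^8(n)}{n}\,\pi(x)\le \frac{\log^{16}(n)}{n^2}\fstop
\end{equation*}
Combining the two displays yields $\mu_+((x,x'))\le \frac{r}{r-1}\,\frac{\log^{16}(n)}{n}$ uniformly over $(x,x')\in V^2_{\not=}$, which for $n$ large is at most $\log^{17}(n)/n$; together with $\mu_+(\Delta)=0$ this gives \cref{eq:max-mu+}.

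There is no genuine obstacle in this argument. The one point to get right is that one should \emph{not} estimate $\sum_{z}\pi(z)^2P(z,x)P(z,x')$ term by term (which would lose the crucial extra factor of $\pi(x)$ coming from stationarity), but rather remove one copy of $\max_x\pi(x)$ and then collapse the remaining weighted sum via $\pi P=\pi$. The generous logarithmic slack --- $\log^{17}$ in place of the $\log^{16}$ that the computation actually produces --- conveniently absorbs the prefactor $r/(r-1)\le 2$ and makes the bound uniform in the out-degree $r$.
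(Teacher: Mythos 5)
Your proof is correct, and it takes a genuinely different (and in fact leaner) route than the paper's. The paper bounds the numerator of \cref{eq:mu+} structurally: it first shows, via a union bound over pairs and the binomial representation of common in-neighbors (cf.\ \cref{eq:3-in}), that w.h.p.\ every pair of distinct vertices has at most two common in-neighbors, so that $\sum_{z}\pi(z)^2P(z,x)P(z,x')$ contains at most two nonzero terms, each at most $\max_z\pi(z)^2/r^2$; combined with $\sum_w\pi(w)^2\ge n^{-1}$ and \cref{eq:max-pi} this gives the claim. You instead collapse the whole sum analytically: peel off one factor $\max_z\pi(z)\le\log^8(n)/n$, bound $P(z,x')\le 1$, and use stationarity $\sum_z\pi(z)P(z,x)=\pi(x)\le\log^8(n)/n$, which yields $\mu_+((x,x'))\le\frac{r}{r-1}\log^{16}(n)/n$ uniformly in the pair, comfortably below the $\log^{17}(n)/n$ target. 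Your argument is deterministic given the single event $\{\max_x\pi(x)\le\log^8(n)/n\}$ from \cref{eq:max-pi}, so it avoids the extra probabilistic estimate on the random DFA altogether and requires no control of the support of $\mu_+$; what the paper's route buys in exchange is a slightly sharper $r$-dependence (a factor $1/(r(r-1))$ rather than $r/(r-1)$) and the structural information that $\mu_+$ charges only pairs with a common in-neighbor, a fact it exploits again later (e.g.\ via $\cE_{x,x'}$ in \cref{prop:next-meeting}), but none of this is needed for the bound \cref{eq:max-mu+} itself.
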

	\begin{proof}
		Recall that $\mu_+(\Delta)=0$, hence we estimate $\mu_+$ on $V^2_{\not=}$ only.
		We start by showing that,  w.h.p., all distinct vertices in the original graph have at most two common in-neighbors. Indeed, calling $\cW_{x,x'}$ the event that $x$ and $x'\in V$ have at least three common in-neighbors, by the union bound and the representation employed in \cref{eq:in-neigh_construction}, there exist $c_1,c_2>0$ such that 
		\begin{align}\label{eq:3-in}\begin{aligned}
				\P\tonde{\bigcup_{(x,x')\in V^2_{\not=}} \cW_{x,x'}} &\le n^2\, \P\tonde{\cW_{x,x'	}}\\
				&= n^2 \Pr\left(\Bin\(n,\frac{\binom{n-2}{r-2}}{\binom{n}{r}}\)
				\ge 3 \right) 	\le n^2 \(n\cdot \frac{c_1}{n^2}\)^3\le \frac{c_2}{n}\fstop
			\end{aligned}
		\end{align}	
		Recall \cref{eq:mu+}. 	Then, 		by  \cref{eq:3-in}, $r\ge 2$, and Cauchy-Schwarz inequality $\sum_{w\in V} \pi(w)^2 \ge \frac{1}{n}$,  	
		\begin{align*}
			\P\(\max_{(x,x') \in V^2_{\not=}}\mu_+((x,x'))> \frac{\log^{17}(n)}{n}\)&\le \P\(\max_{z\in V} \pi(z)^2 > \frac{r(r-1)}{3} \frac{\log^{17}(n)}{n} \sum_{w \in V}\pi(w)^2 \) + o(1)\\
			&\le \P\( \max_{z\in V} \pi(z) > \frac{\log^{17/2}(n)}{\sqrt 3 n}\) + o(1)\fstop
		\end{align*}
		The  claim in  \cref{eq:max-pi} yields the desired result.
	\end{proof}

	Recall $P^{(2)}\eqdef (P)^{\otimes 2}=P\otimes P$ from \cref{sec:results}.	 The next  lemma establishes that two independent walks initialized according to $\mu_+$ are, w.h.p., unlikely to meet within a logarithmic time; this carries some implications on the mixing of the auxiliary chain when starting from $\mu_+$. 
	\begin{lemma}\label{prop:next-meeting}
		Let $\beta >0$ and $t\eqdef \log^\beta(n)$.	 Then,  w.h.p., 
		\begin{equation}\label{eq:meet-mu+}
			\sum_{(x,x')\in V^2_{\not=}}\mu_+((x,x'))\Prob_{(x,x')}(\tau_{\rm meet}<t)\le n^{-1/4}\fstop
		\end{equation}
	\end{lemma}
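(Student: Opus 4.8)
The plan is to dispatch the sum
\begin{equation*}
	Z\eqdef \sum_{(x,x')\in V^2_{\not=}}\mu_+((x,x'))\,\Prob_{(x,x')}(\tau_{\rm meet}<t)
\end{equation*}
by splitting the index set according to the size of $\Prob_{(x,x')}(\tau_{\rm meet}<t)$ relative to a polynomially small threshold $\epsilon\eqdef n^{-1/2}$. Since $\mu_+$ is a probability measure on $V^2_{\not=}$ (recall $\mu_+(\Delta)=0$), the pairs $(x,x')$ with $\Prob_{(x,x')}(\tau_{\rm meet}<t)\le\epsilon$ contribute at most $\epsilon=n^{-1/2}$ to $Z$, so it remains to control $\sum_{(x,x')\in\mathcal B}\mu_+((x,x'))$, where $\mathcal B\eqdef\{(x,x')\in V^2_{\not=}:\Prob_{(x,x')}(\tau_{\rm meet}<t)>\epsilon\}$. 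On the high-probability event of \cref{lemma:mu+} one has $\mu_+(\xi)\le\log^{17}(n)/n$ for every $\xi\in\tilde V$, hence $\sum_{(x,x')\in\mathcal B}\mu_+((x,x'))\le\frac{\log^{17}(n)}{n}\,\#\tonde{\mathcal B\cap\supp(\mu_+)}$; and since $\supp(\mu_+)\subseteq\{(x,x')\in V^2_{\not=}:\cE_{x,x'}\text{ holds}\}$, it is enough to bound the cardinality of $\mathcal B'\eqdef\{(x,x')\in V^2_{\not=}:\cE_{x,x'}\text{ holds and }\Prob_{(x,x')}(\tau_{\rm meet}<t)>\epsilon\}$.

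For the latter I would run a first-moment computation. Using $\P(\cE_{x,x'}\cap A)=\P(A\mid\cE_{x,x'})\,\P(\cE_{x,x'})$, the estimate $\P(\cE_{x,x'})=\Theta(n^{-1})$ obtained in \cref{eq:step2}, and \cref{lemma:mu-xy-new} applied with the prescribed $t=\log^{\beta}(n)$, one gets, for a suitable constant $C=C(r)>0$,
\begin{equation*}
	\E[\#\mathcal B']=\sum_{(x,x')\in V^2_{\not=}}\P(\cE_{x,x'})\,\P\tonde{\Prob_{(x,x')}(\tau_{\rm meet}<t)>\epsilon\mid\cE_{x,x'}}\le n^2\cdot\frac{C}{n}\cdot\frac{8}{\epsilon}\,\frac{\log^2(n)\,t^2}{n}=\frac{8C\,\log^{2+2\beta}(n)}{\epsilon}\fstop
\end{equation*}
With $\epsilon=n^{-1/2}$ this reads $\E[\#\mathcal B']\le 8C\,n^{1/2}\log^{2+2\beta}(n)$, so Markov's inequality, applied with an extra slack factor $\log n$, yields that, w.h.p., $\#\mathcal B'\le 8C\,n^{1/2}\log^{3+2\beta}(n)$.

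Combining the two high-probability events, we obtain that, w.h.p.,
\begin{equation*}
	Z\le\epsilon+\frac{\log^{17}(n)}{n}\,\#\mathcal B'\le n^{-1/2}+\frac{8C\,\log^{20+2\beta}(n)}{n^{1/2}}=o(n^{-1/4})\comma
\end{equation*}
which is $\le n^{-1/4}$ for all $n$ large enough, as claimed. The only genuinely delicate step is the counting of the bad pairs: the trivial bound $\#\mathcal B'\le n^2$ is far too weak, and one must combine the \emph{conditional} smallness of $\Prob_{(x,x')}(\tau_{\rm meet}<t)$ provided by \cref{lemma:mu-xy-new} with the fact that $\cE_{x,x'}$ — two prescribed vertices sharing an in-neighbour — is itself a $\Theta(n^{-1})$-probability event; only with this gain does the expected number of bad pairs come out polylogarithmic in $n$, up to the factor $\epsilon^{-1}=n^{1/2}$. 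The remaining ingredients — the split at the threshold $\epsilon$, the union of two w.h.p.\ events, and the balancing of the two error terms — are routine.
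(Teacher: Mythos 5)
Your proposal is correct and follows essentially the same route as the paper: split the sum at a polynomially small threshold for $\Prob_{(x,x')}(\tau_{\rm meet}<t)$, bound the number of \textquotedblleft bad\textquotedblright\ pairs by a first-moment/Markov argument combining \cref{lemma:mu-xy-new} with $\P(\cE_{x,x'})=\Theta(n^{-1})$ and the inclusion $\supp(\mu_+)\subseteq\{\cE_{x,x'}\}$, and control their $\mu_+$-mass via \cref{lemma:mu+}. The only difference is bookkeeping: you take threshold $n^{-1/2}$ and a bad-set count of order $n^{1/2}\,\mathrm{polylog}(n)$, whereas the paper thresholds at $\gamma/2=n^{-1/4}/2$ and bounds $|B_+|\le n^{3/4}$, and your choice in fact balances the polylogarithmic factors slightly more comfortably.
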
	
	\begin{proof}
		For notational convenience, set $\gamma\eqdef n^{-1/4}$.
		Call $B$ the random set of states $(x,x')\in V^2_{\not=}$ for which  $\cE_{x,x'}$ in \cref{eq:Exx} holds; further, let $B_+$, resp.\ $B_-$, denote the states $(x,x')\in B$ satisfying $\Prob_{(x,x')}(\tau_{\rm meet}<t)> \gamma/2$, resp. $\le \gamma/2$. 
		We now estimate the size of the random set $B_+$. To this purpose, recall from \cref{eq:step2} that there exists $c_1=c_1(r)>0$ such that
		\begin{equation}\label{eq:Echi}
			\P\tonde{\cE_{x,x'}}\le  \frac{c_1}{n}\comma\qquad  (x,x')\in V^2_{\not=} \fstop
		\end{equation} 
		Then, by Markov's inequality,  for every $k>0$, \cref{eq:lemma-mu-xy-new,eq:Echi} yield
		\begin{align*}
			\P\tonde{|B_+|> k}&\le\frac{1}{k}\sum_{(x,x')\in V^2_{\not=}}\P\( \cE_{x,x'}\cap \left\{\Prob_{(x,x')}(\tau_{\rm meet}<t)> \gamma/2\right\}\)\\
			&= \frac{1}{k}\sum_{(x,x')\in V^2_{\not=}}\P\tonde{\Prob_{(x,x')}\tonde{\tau_{\rm meet}<t}>\gamma/2\mid \cE_{x,x'}}\P\tonde{\cE_{x,x'}}
			\\
			&\le\frac{16}{k\:\gamma}\:\frac{\log^2(n)\:t^2}{n} \sum_{(x,x')\in V^2_{\not=}} \P\tonde{\cE_{x,x'}}\\
			&
			\le\frac{16\:c_1}{k\:\gamma}\:\log^2(n)\:t^2\fstop
		\end{align*}
		Recall that $t=\log^\beta(n)$ and $\gamma= n^{-1/4}$;  hence, setting $k\eqdef n^{3/4}$
		we get	 
		\begin{equation}
			\label{eq:B+}\P\tonde{|B_+|>n^{3/4}}\le n^{-1/3}\fstop
		\end{equation}
		Recall from Lemma \ref{lemma:mu+} that
		\begin{equation}\label{eq:C-comp}
			\P\tonde{\cD^c}=o(1)\comma\qquad
			\cD\eqdef \left\{	
			\max_{\xi\in \tilde V}\mu_+(\xi)\le \frac{\log^{17}(n)}{n}
			\right\}\fstop
		\end{equation}
		Then, \cref{eq:B+,eq:C-comp} yield
		\begin{align*}
			&\P\tonde{\sum_{(x,x')\in V^2_{\not=}}\mu_+((x,x'))\Prob_{(x,x')}(\tau_{\rm meet}<t)>\gamma}\\&\le n^{-1/3}+o(1)+ \P\(  \{\abs{B_+}\le n^{3/4}\}\cap\cD\cap \left\{\sum_{(x,x')\in V^2_{\not=}}\mu_+((x,x'))\Prob_{(x,x')}(\tau_{\rm meet}<t)>\gamma\right\}\)\fstop 
		\end{align*}
		Note that the probability on the right-hand side above equals zero for all $n$ sufficiently large; this follows by splitting the sum over $V^2_{\not=}$ into one sum over  $B_+$ and one over $B_-$, and using the definitions of $B_+$ and $\cD$. This proves \cref{eq:meet-mu+}, thus concluding the proof of the lemma.
	\end{proof}

	We are finally in good shape to conclude the proof of \cref{prop:mix-diag}. Before entering any details, we provide the reader with some general ideas underlying the proof that the auxiliary chain $\tilde P$ is rapidly mixing, uniformly over the initial position. The goal is to couple the chain $\tilde P$ with the product chain $P^{(2)}$ up to the first hitting of the diagonal. If this occurs after the mixing time of $P^{(2)}$, then the natural coupling ensures mixing for $\tilde P$, too. If the hitting of the diagonal occurs before the mixing of the product chain, then it suffices to analyze the mixing of the chain $\tilde P$ when starting from the measure $\mu_+$ in \cref{eq:mu+}. Here, we exploit \cref{prop:next-meeting}, which ensures that the natural coupling between the two chains succeeds over  polylogarithmic times when starting from $\mu_+$, and this is enough to get to the desired result.

	\begin{proof}[{Proof of \cref{prop:mix-diag}}]
		Recall the definitions of $\tau_\Delta$, $\tau_{\Delta,+}$, $\mu_+$ and $\varphi$ given just above \cref{lemma:mu+}, as well as $S=\log^3(n)$. 
		
		We start by proving the following preliminary result:  w.h.p.,
		\begin{align}\label{eq:mixing-mu+}
			\sup_{t< S}\sup_{A\subset \tilde V}\abs{\sum_{(x,x')\in V^2_{\not=}}\mu_+((x,x'))\tonde{ \tilde P^t((x,x'),A)-  (P^{(2)})^t((x,x'),\varphi^{-1}(A))}} \le n^{-1/4}\fstop
		\end{align}
		Since the paths of the product and auxiliary chains can be coupled until the first hitting time of the diagonal,	the left-hand side of \cref{eq:mixing-mu+} equals
		\begin{align*}
			\sup_{t<S}\sup_{A\subset \tilde V}\abs{\begin{array}{l}\sum_{(x,x')\in V^2_{\not=}}\mu_+((x,x'))\tilde \Prob_{\varphi((x,x'))}\tonde{\Xi_t\in A\comma \tau_\Delta< S}\\
					-\sum_{(x,x')\in V^2_{\not=}}\mu_+((x,x'))\Prob_{(x,x')}\tonde{\mathbf{X}^{(2)}_t\in \varphi^{-1}(A)\comma \tau_{\rm meet}< S}\end{array}}\fstop\end{align*}
		Bounding the absolute value above with the maximum between the two sums and setting $A=\tilde V$ there, since
		\begin{equation}\label{eq:meet-hit}
			\tilde \Prob_{\varphi((x,x'))}\tonde{\tau_\Delta=t}=\Prob_{(x,x')}\tonde{\tau_{\rm meet}=t}\comma\qquad (x,x')\in V^2_{\not=}\comma t \in \N\comma
		\end{equation}
		the claim in \cref{eq:meet-mu+} yields  \cref{eq:mixing-mu+}.
		
		We now turn to the proof of $\P(\cA_4)=1+o(1)$.
		Arguing as in the proof of \cref{eq:mixing-mu+},
		\begin{align}\label{eq:eq0}
			\begin{aligned}
				&\max_{\xi\in \tilde V}\norm{\tilde P^S(\xi,\emparg)-\tilde \pi}_{\rm TV}\le \max_{(x,x')\in V^2}\norm{(P^{(2)})^S((x,x'),\emparg)-\pi^{\otimes 2}}_{\rm TV}\\
				+&\max_{(x,x')\in V^2}\sup_{A\subset \tilde V}\abs{\tilde \Prob_{\varphi((x,x'))}\tonde{\Xi_S\in A\comma \tau_\Delta\le S}-\Prob_{(x,x')}\ttonde{\mathbf X^{(2)}_S\in \varphi^{-1}(A)\comma \tau_{\rm meet}\le S}}\fstop
			\end{aligned}
		\end{align}
		Showing that the first term on the right-hand side above vanishes in probability is an immediate consequence of \cref{eq:cutoff} and $S=\omega(\log(n))$; as for the second term, by the strong Markov property and \cref{eq:meet-hit},
		we get, for every fixed $(x,x')\in V^2$ and $A\subset \tilde V$,
		\begin{align*}
			&\cQ_{x,x'}(A)	\eqdef\abs{\tilde \Prob_{\varphi((x,x'))}\tonde{\Xi_S\in A\comma \tau_\Delta\le S}-\Prob_{(x,x')}\ttonde{\mathbf X^{(2)}_S\in \varphi^{-1}(A)\comma \tau_{\rm meet}\le S}}\\
			&\qquad=\abs{\begin{array}{l}\sum_{t=0}^S \Prob_{(x,x')}\ttonde{\tau_{\rm meet}=t} {\tilde \Prob_\Delta\ttonde{\Xi_{S-t}\in A}}\\
					-\sum_{t=0}^S \sum_{y\in V}\Prob_{(x,x')}\ttonde{\tau_{\rm meet}=t\comma \mathbf X^{(2)}_t=(y,y)}\Prob_{(y,y)}\ttonde{\mathbf X^{(2)}_{S-t}\in \varphi^{-1}(A)}
			\end{array}}\\
			&\qquad=\abs{\sum_{t=0}^S\sum_{y\in V} \Prob_{(x,x')}\ttonde{\tau_{\rm meet}=t\comma \mathbf X^{(2)}_t=(y,y)}\tonde{\begin{array}{l}
						\tilde \Prob_\Delta\ttonde{\Xi_{S-t}\in A}\\
						-
						\Prob_{(y,y)}\ttonde{\mathbf X^{(2)}_{S-t}\in \varphi^{-1}(A)}
			\end{array}}}\fstop
		\end{align*}
		\begin{color}{black}
			We now show that
			\begin{align}\label{eq:eq}
				\begin{aligned}
					\max_{(x,x')\in V^2}\sup_{A\subset \tilde V} \cQ_{x,x'}(A)&\le \sup_{t\le S/2}\norm{\tilde P^{S-t}(\Delta,\emparg)-\tilde \pi}_{\rm TV}\\
					&\qquad+ \sup_{t\le S/2}\max_{y\in V} \norm{(P^{(2)})^{S-t}((y,y),\emparg)-\pi^{\otimes 2}}_{\rm TV}\\
					&\qquad+  \max_{(x,x')\in V^2}	\sum_{t=S/2+1}^S  \Prob_{(x,x')}\tonde{\tau_{\rm meet}=t}\fstop
				\end{aligned}
			\end{align}
			Indeed, for the second half of the sum, by moving the absolute value inside the summation and bounding by 1 the difference between round brackets, we obtain
			\begin{align*}
				&\abs{\sum_{t=S/2+1}^S\sum_{y\in V} \Prob_{(x,x')}\ttonde{\tau_{\rm meet}=t\comma \mathbf X^{(2)}_t=(y,y)}\tonde{\begin{array}{l}
							\tilde \Prob_\Delta\ttonde{\Xi_{S-t}\in A}\\
							-
							\Prob_{(y,y)}\ttonde{\mathbf X^{(2)}_{S-t}\in \varphi^{-1}(A)}
				\end{array}}}\\
				&\le\sum_{t=S/2+1}^S  \Prob_{(x,x')}\tonde{\tau_{\rm meet}=t}\comma
			\end{align*}
			which, after taking the supremum over $(x,x')$, corresponds to the last term on the right-hand side of \cref{eq:eq}.
			On the other hand, for the first half of the sum, estimating uniformly in $t\le S/2$ and $y\in V$ the terms inside the round brackets, we get
			\begin{align*}
				&\abs{\sum_{t=0}^{S/2}\sum_{y\in V} \Prob_{(x,x')}\ttonde{\tau_{\rm meet}=t\comma \mathbf X^{(2)}_t=(y,y)}\tonde{\begin{array}{l}
							\tilde \Prob_\Delta\ttonde{\Xi_{S-t}\in A}\\
							-
							\Prob_{(y,y)}\ttonde{\mathbf X^{(2)}_{S-t}\in \varphi^{-1}(A)}
				\end{array}}}\\
				&\le\sup_{t\le \frac{S}2}\sup_{y\in V}\abs{
					\tilde \Prob_\Delta\ttonde{\Xi_{S-t}\in A}
					-
					\Prob_{(y,y)}\ttonde{\mathbf X^{(2)}_{S-t}\in \varphi^{-1}(A)}}\fstop
			\end{align*}
			Finally, adding and subtracting $\tilde{\pi}(A)$ inside the latter absolute value, using the triangle inequality, and taking the supremum over $A\subseteq\tilde{V}$ yields  the first two terms on the right-hand side of \cref{eq:eq}.
		\end{color}
		The second term in \cref{eq:eq} is dealt with as the first one in \cref{eq:eq0}. (There, we employ the fact that $S-t\ge S/2-1=\omega(\log(n))$.) As for the third term in \cref{eq:eq}, 
		\begin{align*}
			\max_{(x,x')\in V^2}	\sum_{t=S/2+1}^S  \Prob_{(x,x')}\tonde{\tau_{\rm meet}=t}&\le S \sup_{t> S/2}\max_{(x,x')\in V^2}  \Prob_{(x,x')}\ttonde{\mathbf X^{(2)}_t\in \Delta}\\
			&\le S \sup_{t> S/2} \max_{(x,x')\in V^2} \norm{(P^{(2)})^{t}((x,x'),\emparg)-\pi^{\otimes 2}}_{\rm TV} + S \tilde \pi(\Delta)\fstop
		\end{align*}
		Since $S= \log^3(n)$, \cref{eq:cutoff} ensures that 
		\begin{align*}
			S \sup_{t> S/2} \max_{(x,x')\in V^2} \norm{(P^{(2)})^{t}((x,x'),\emparg)-\pi^{\otimes 2}}_{\rm TV}\overset{\P}\to 0\comma
		\end{align*} while 	$S \tilde \pi(\Delta)\overset{\P}\to 0$ by \cref{lemma:A2} (cf.\ \cref{hp:pidelta}). 
		
		We are now left with showing that the first term on the right-hand side of \cref{eq:eq} vanishes in probability. Recalling the definition of $\tau_{\Delta,+}$, note that, 	for any given DFA, under $\tilde \Prob_\Delta$ {the stopping time	 $\tau_{\Delta,+}$} is  distributed as a {geometric distribution of success probability $p=1-\frac{1}{r}$}:
		\begin{align}\label{eq:geometric}
			\tau_{\Delta,+}\sim {\rm Geom}\tonde{1-\frac{1}{r}}\comma \qquad \text{under}\ \tilde \Prob_\Delta\fstop\end{align} Indeed, when attempting to jump, the process associated to $\tilde P$ stays on $\Delta$ if the second coordinate chooses the same arrow that the first one chose, and this occurs with probability $1/r$,  independently at each step. (Recall that, for any given $x, y\in V$, multiple directed edges $x\to y$ are not allowed, and this fact holds regardless of connectedness properties of the graph.)
		Hence, setting $\hslash\eqdef \log \log (n)$, the strong Markov property and the triangle inequality yield
		\begin{align}\label{eq:final}
			\begin{aligned}
				&\sup_{t\le S/2}\norm{\tilde P^{S-t}(\Delta,\emparg)-\tilde \pi}_{\rm TV}
				\\
				&\le \tilde \Prob_\Delta\ttonde{\tau_{\Delta,+}\ge \hslash} + \sup_{S/2-\hslash\le t\le S }\norm{\mu_+\tilde P^{t-\hslash}-\tilde \pi}_{\rm TV}\\
				&\le \tilde \Prob_\Delta\ttonde{\tau_{\Delta,+}\ge \hslash} + \sup_{S/2-\hslash\le t\le S} \max_{(x,x')\in V^2}\norm{(P^{(2)})^{t-\hslash}((x,x'),\emparg)-\pi^{\otimes 2}}_{\rm TV}\\
				& + \sup_{S/2-\hslash\le t\le S }  \sup_{A\subset \tilde V}\abs{\sum_{(x,x')\in V^2_{\not=}}\mu_+((x,x'))\tonde{ \tilde P^{t-\hslash}((x,x'),A)-  (P^{(2)})^{t-\hslash}((x,x'),\varphi^{-1}(A))}}\fstop
			\end{aligned}
		\end{align}
		The first term on the right-hand side of \cref{eq:final} vanishes $\P$-a.s.\ since $\hslash$ is diverging and $\tau_{\Delta,+}$ is geometric with constant parameter; the second  and third terms vanish in probability by applying, respectively, \cref{eq:cutoff} with  $S/2-\hslash=\omega(\log(n))$, and \cref{eq:mixing-mu+} with $S=\log^3(n)$. This concludes the proof of the proposition.
	\end{proof}

	\subsection{Number of returns}\label{suse:returns}
	In this section, we provide a first order estimate for the expected number of returns to the diagonal \textcolor{black}{within a time $T=\log^5(n)$}. To this purpose, recall the definition of $\cA_5=\cA_5(\eps)$ in \cref{hp:R}, and define
	\begin{align}
		\tilde R(\Delta)\eqdef \sum_{t=0}^T \tilde P^t(\Delta,\Delta)\fstop
	\end{align}
	\begin{proposition}\label{prop:R} $\lim_{n\to \infty}\P\tonde{\cA_5}=1$, for every $\eps>0$.
	\end{proposition}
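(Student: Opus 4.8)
The plan is to read $\tilde R(\Delta)=\sum_{t=0}^{T}\tilde P^{t}(\Delta,\Delta)$ as the expected number of visits to the state $\Delta$, under $\tilde\Prob_\Delta$, up to time $T=\log^{5}(n)$, and to split these visits into those made during the \emph{first sojourn} at $\Delta$ and those made during \emph{later excursions}. First I would handle the initial sojourn. By construction of $\tilde P$ one has $\Xi_t=\Delta$ for all $0\le t<\tau_{\Delta,+}$, and by \cref{eq:geometric} the exit time $\tau_{\Delta,+}$ is, \emph{for every} realization of the DFA, a ${\rm Geom}(1-\tfrac1r)$ variable under $\tilde\Prob_\Delta$. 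Hence the number of visits to $\Delta$ before the first exit, truncated at time $T$, is $\min(\tau_{\Delta,+},T+1)$, whose expectation is the deterministic quantity
\[
\sum_{k=1}^{T+1}r^{-(k-1)}=\frac{r}{r-1}\tonde{1-r^{-(T+1)}}=\frac{r}{r-1}+o(1),
\]
since $T\to\infty$. This already produces the leading term, and no estimate on the random graph is needed for it.

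Next I would control the later excursions. Every time the chain sits at $\Delta$ it leaves, after a ${\rm Geom}(1-\tfrac1r)$ sojourn independent of the past, into the \emph{fixed} distribution $\mu_+$ on $V^2_{\not=}$ of \cref{eq:mu+}. Writing $K$ for the number of returns to $\Delta$ within $\{0,\dots,T\}$ and decomposing the total count as $\sum_{t=0}^{T}\ind_{\{\Xi_t=\Delta\}}=\sum_{i=1}^{K+1}L_i$ into the truncated sojourn lengths $L_i$ (each $L_i$, $i\ge2$, having conditional expectation $\le\tfrac{r}{r-1}$ given the past, again by \cref{eq:geometric}), I would apply the strong Markov property at the successive entrance times to $\Delta$. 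Together with the identity $\tilde\Prob_{(x,x')}(\tau_\Delta\le T)=\Prob_{(x,x')}(\tau_{\rm meet}\le T)$ from \cref{eq:meet-hit}, this gives, for each fixed DFA,
\[
\tilde\Prob_\Delta(K\ge j)\le R_\star^{\,j},\qquad R_\star:=\sum_{(x,x')\in V^2_{\not=}}\mu_+((x,x'))\,\Prob_{(x,x')}(\tau_{\rm meet}\le T),
\]
because after each return at most $T$ steps remain and the chain restarts from $\mu_+$; summing up, $\Expect_\Delta\!\big[\sum_{i\ge2}L_i\big]\le\tfrac{r}{r-1}\sum_{j\ge1}R_\star^{\,j}=\tfrac{r}{r-1}\tfrac{R_\star}{1-R_\star}$.

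Finally I would invoke \cref{prop:next-meeting} with $\beta=6$ (so that $\log^{6}(n)\ge T+1$ for $n$ large): it yields $R_\star\le n^{-1/4}$ w.h.p., hence $\tfrac{r}{r-1}\tfrac{R_\star}{1-R_\star}=o(1)$ w.h.p. Combining the three displays, $\tilde R(\Delta)=\tfrac{r}{r-1}+o(1)$ w.h.p., which is exactly the event $\cA_5$ in \cref{hp:R} for every $\eps>0$. The only genuinely hard input is \cref{prop:next-meeting}, already proved; in the present argument the delicate point is the strong-Markov bookkeeping that upgrades the ``a single return is unlikely'' estimate to a bound on the \emph{expected} number of visits — one must control \emph{all} excursions, not just the first — while keeping the truncations at $T$ versus $T+1$ and the polylogarithmic exponents compatible with the hypotheses of \cref{prop:next-meeting}; the geometric tail estimate for the first sojourn is routine.
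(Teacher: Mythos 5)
Your argument is correct and rests on the same two pillars as the paper's proof: the leading term $\tfrac{r}{r-1}+o(1)$ comes from the deterministic geometric sojourn at $\Delta$ (\cref{eq:geometric}), and the contribution of later visits is controlled through \cref{prop:next-meeting} via the exit distribution $\mu_+$ and the identification \cref{eq:meet-hit}. The only real difference is the bookkeeping of the post-exit visits. The paper is cruder there: for each of the $T+1$ time indices it bounds the probability that the chain has left $\Delta$ and come back by $\sum_{\xi}\mu_+(\xi)\tilde\Prob_\xi(\tau_\Delta<T)$, so the error term is $T\sum_{\xi}\mu_+(\xi)\tilde\Prob_\xi(\tau_\Delta<T)\le T\,n^{-1/4}=o(1)$ w.h.p., with the factor $T$ simply absorbed by the $n^{-1/4}$ bound of \cref{prop:next-meeting} (applied directly with $\beta=5$). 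You instead run a renewal-type argument: iterating the strong Markov property at successive entrances to and exits from $\Delta$, and using that the exit state is $\mu_+$-distributed independently of the sojourn length, you get $\tilde\Prob_\Delta(K\ge j)\le R_\star^{\,j}$ and hence an error of order $\tfrac{r}{r-1}\tfrac{R_\star}{1-R_\star}=O(n^{-1/4})$, sharper than the paper's $O(T n^{-1/4})$ but requiring the excursion decomposition and the slight horizon adjustment ($\beta=6$ so that $\log^\beta(n)\ge T+1$), which you handle correctly. Both routes are valid; the paper's union bound over time indices is the more economical one given that any $o(1)$ error suffices for $\cA_5$ in \cref{hp:R}, while yours quantifies the number of returns more precisely at the cost of the extra strong-Markov bookkeeping.
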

	\begin{proof}
		Recall that, for any given DFA $G$ and under $\tilde \Prob_\Delta$, $\tau_{\Delta,+}$ is geometric with parameter $1-\frac{1}{r}$ (cf.\ \cref{eq:geometric}).
		Therefore, estimating from below $\tilde P^t(\Delta,\Delta)$ with  
		\begin{align*}\tilde\Prob_\Delta\(\Xi_s = \Delta\ \text{for all}\ s \in \{0,\ldots, t\} \)=\tonde{\frac{1}{r}}^t\comma
		\end{align*} we get, since $T$ diverges as $n\to \infty$, 	
		\begin{equation}\label{eq:lbR}
			\tilde R(\Delta)\ge \sum_{t=0}^T\(\frac{1}{r}\)^t= \frac{r}{r-1}+o(1)\comma \qquad \P\text{-a.s.}\fstop
		\end{equation}

		On the other hand, for any given  $G$, we have
		\begin{align*}
			\tilde R(\Delta)&\le \sum_{t=0}^T \tilde\Prob_\Delta\(\Xi_s=\Delta\ \text{for all}\ s\in \{0,\ldots, t\}\)\\
			&+  \sum_{t=0}^T \tilde\Prob_\Delta\(\exists s, \tilde s \in \{1,\ldots, t\}, s< \tilde s:   \Xi_{s} \neq \Delta, \Xi_{\tilde s}=\Delta\) \\
			&\le \frac{r}{r-1} + T \sum_{\xi\in V^2_{\not=}} \mu_+(\xi)\tilde\Prob_\xi\(\tau_\Delta < T \)\fstop	\end{align*}
		By \cref{eq:meet-hit}, the choice of $T=\log^5(n)$ and  Proposition \ref{prop:next-meeting}, we obtain, for every $\eps > 0$,  
		\begin{equation}
			\P\(\sum_{\xi \in V^2_{\not=}} \mu_+(\xi)\tilde \Prob_\xi\(\tau_\Delta< T\)>\frac{\eps}{T} \) \underset{n\to \infty}\longrightarrow 0\comma 
		\end{equation}
		and, thus,
		\begin{equation}\label{eq:ubR}
			\P\(\tilde R(\Delta) > \frac{r}{r-1}+\eps\)\underset{n\to \infty}\longrightarrow 0\fstop
		\end{equation}
		Combining  \cref{eq:lbR} and \cref{eq:ubR} yields the desired claim.
	\end{proof}

	\section{Proofs of  main results}\label{sec:theorems_proofs}
	This section contains the proofs of \cref{th:1,th:2}.
	\subsection{Proof of \cref{th:1}}
	As a consequence of \cref{pr:fvtltilde}, for every $\eps>0$, w.h.p.,   
	\begin{align*}
		\sup_{t\ge 0} \frac{\Prob_{\pi\otimes \pi}\tonde{\tau_{\rm meet}>t}}{\tonde{1-\Lambda}^t}= \sup_{t\ge 0} \frac{\max_{x,y\in V}\Prob_{(x,y)}\tonde{\tau_{\rm meet}>t}}{\tonde{1-\Lambda}^t}\frac{\Prob_{\pi\otimes \pi}\tonde{\tau_{\rm meet}>t}}{\max_{x,y\in V}\Prob_{(x,y)}\tonde{\tau_{\rm meet}>t}}<1+\eps\fstop
	\end{align*}
	Hence, it suffices to show that, for every $\eps>0$, w.h.p., 
	\begin{align}\label{eq:claim-thm1.1}
		\sup_{t\ge 0}	\frac{\max_{x,y\in V}\Prob_{(x,y)}\tonde{\tau_{\rm meet}>t}}{\Prob_{\pi\otimes \pi}\tonde{\tau_{\rm meet}>t}}<1+\eps\fstop
	\end{align}
	Let $T\eqdef \log^5(n)$; then, by \cref{lemma:hpfvtl} and \cref{rmk:general}  for every  $\eps>0$, w.h.p., 
	\begin{align}\label{eq:max-MQS21}
		\sup_{t\ge 0}	\frac{\max_{x,y\in V}\Prob_{(x,y)}\tonde{\tau_{\rm meet}>t}}{\Prob_{\pi\otimes \pi}\tonde{\tau_{\rm meet}>t}}<1+\eps\fstop
	\end{align}
	Further, by \cref{pr:fvtltilde}, uniformly over $t\ge 0$, w.h.p., \begin{align}\label{eq:expression1}\Prob_{\pi\otimes\pi}\tonde{\tau_{\rm meet}>t}=(1+o(1)) (1-\Lambda)^{t}\comma
	\end{align}
	yielding \cref{eq:claim-thm1.1}. \qed

	\subsection{Proof of \cref{th:2}}
	In view of \cref{th:1}, it suffices to prove that, for every $(x,y)\in V^2_{\not=}$ and $\eps>0$, w.h.p.,
	\begin{align}\label{eq:inf1}
		\inf_{t\ge 0}\frac{ \Prob_{(x,y)}\tonde{\tau_{\rm meet}>t}}{\tonde{1-\Lambda}^t}>1-\eps\fstop
	\end{align}
	Splitting the  infimum above into two parts and recalling $n\Lambda\overset\P\to 1$ (\cref{th:1}), the claim in \cref{eq:inf1} follows if, for some $s=o(n)$ and  every $\eps>0$, w.h.p.,
	\begin{align}\label{eq:inf2}
		\Prob_{(x,y)}\tonde{\tau_{\rm meet}>s}>1-\eps\comma\qquad
		\inf_{t> s}\frac{\Prob_{(x,y)}\tonde{\tau_{\rm meet}>t}}{\tonde{1-\Lambda}^t}>1-\eps\fstop
	\end{align}
	In what follows, we  prove the two claims in \cref{eq:inf2} with $s=\log^{5}(n)$. (Note that, by \cref{eq:cutoff} from \cref{th:single_rw}, this choice guarantees that
	\begin{align}\label{eq:mix-product}
		\max_{(x,y)\in V^2}\max_{(u,v)\in {\rm supp(\pi^{\otimes 2})}}\abs{\frac{\Prob_{(x,y)}\tonde{\mathbf X^{(2)}_s=(u,v)}}{\pi^{\otimes 2}(u,v)}-1}\le\frac{\eps}{2} 
	\end{align}
	holds w.h.p..)

	As for the first claim in \cref{eq:inf2},  Markov inequality yields
	\begin{align}\label{eq:markov}
		\P\tonde{\Prob_{(x,y)}\tonde{\tau_{\rm meet}\le s}\ge \eps}\le \eps^{-1}\E\quadre{\Prob_{(x,y)}\tonde{\tau_{\rm meet}\le s}}\comma\qquad \eps>0\fstop
	\end{align}
	We now estimate the above expectation by means of an \emph{annealing argument} \textcolor{black}{in the same spirit of that in} the proof of \cref{lemma:A2}: first construct the partial environment generated by the trajectory of length $s$ of the walk starting at $x$; then, conditioning on this path,  construct a path of the same length starting at $y$. Letting $(X_0=x,X_1,\dots, X_s)$ and $(Y_0=y,Y_1,\dots,Y_s)$ denote these two paths, we have
	\begin{equation}\label{eq:thelatter}
		\E\quadre{\Prob_{(x,y)}\tonde{\tau_{\rm meet}\le s}}\le \P^{\rm an}(\{X_0,X_1,\dots, X_s\}\cap\{Y_0,Y_1,\dots,Y_s\} \neq \emp) \le \frac{s^2}{n}\fstop
	\end{equation}
	By plugging \cref{eq:thelatter} into \cref{eq:markov}, the choice  $s=\log^{5}(n)$ ensures the validity of the first claim in \cref{eq:inf2}.
	
	Concerning the second claim in \cref{eq:inf2}, we get, $\P$-a.s.\ and for every $t> s$, 
	\begin{align}\label{eq:delay}
		\Prob_{(x,y)}(\tau_{\rm meet}>t)=\sum_{(u,v)\in V^2_{\not=}}\Prob_{(x,y)}\tonde{\mathbf X^{(2)}_s=(u,v)\comma\tau_{\rm meet}>s} \Prob_{(u,v)}(\tau_{\rm meet}> t-s)\fstop
	\end{align}
	We now claim that there exists   $\nu=\nu_{x,y}^s:V^2\to [0,1]$ such that, for every $\eps>0$, w.h.p.,
	\begin{equation}\label{eq:delay2}
		\Prob_{(x,y)}\tonde{\mathbf X^{(2)}_s=(u,v)\comma\tau_{\rm meet}>s}\ge \(1-\frac\varepsilon2 \)\pi(u)\pi(v) - \nu(u,v) \comma \qquad (u,v)\in V^2_{\not=}\comma
	\end{equation}
	and
	\begin{equation}\label{eq:delay4}
		\sum_{(u,v)\in V^2_{\not=}}\nu(u,v)\le \frac\varepsilon2\fstop
	\end{equation}
	Indeed, letting
	\begin{align*}
		\nu(u,v)\eqdef  \Prob_{(x,y)}\tonde{\mathbf X^{(2)}_s=(u,v)\comma\tau_{\rm meet}\le s}\comma\qquad (u,v)\in V^2_{\not=}\comma
	\end{align*}
	\cref{eq:delay4} follows at once from $\sum_{(u,v)\in V^2}\nu(u,v)= \Prob_{(x,y)}\tonde{\tau_{\rm meet}\le s}$ and the first claim in \cref{eq:inf2} (with $\eps/2$ in place of $\eps$), while
	\cref{eq:mix-product} ensures that, w.h.p.,
	\begin{align*}
		\Prob_{(x,y)}\tonde{\mathbf X^{(2)}_s=(u,v)\comma\tau_{\rm meet}>s}&= \Prob_{(x,y)}\tonde{\mathbf X^{(2)}_s=(u,v)}-\Prob_{(x,y)}\tonde{\mathbf X^{(2)}_s=(u,v)\comma\tau_{\rm meet}\le s}\\
		&\ge \tonde{1-\frac{\eps}{2}}\pi(u)\pi(v) -\nu(u,v)\comma\qquad (u,v)\in V^2_{\not=}\fstop
	\end{align*}
	This proves \cref{eq:delay2}.

	In view of the two assertions in \cref{eq:delay2,eq:delay4}, we are now ready to prove the second claim in \cref{eq:inf2}:
	by plugging \cref{eq:delay2} into \cref{eq:delay} and applying \cref{eq:delay4}, we get, w.h.p.,
	\begin{align}\label{eq:delay3}
		\Prob_{(x,y)}(\tau_{\rm meet}>t)&\ge \sum_{(u,v)\in V^2_{\not=}}\[\(1-\frac\varepsilon2 \)\pi(u)\pi(v)-\nu(u,v) \]\Prob_{(u,v)}(\tau_{\rm meet}> t-s)\\
		\nonumber	&\ge \(1-\frac\varepsilon2 \)\Prob_{\pi\otimes\pi}(\tau_{\rm meet}>t-s) - \frac\varepsilon2 \max_{(u,v)\in V^2}\Prob_{(u,v)}(\tau_{\rm meet}>t-s)\\
		\nonumber	&\ge (1-2\varepsilon)(1-\Lambda)^t,
	\end{align}
	where the last estimate follows by \cref{pr:fvtltilde},  \cref{eq:max-MQS21} and the fact that $s=o(n)$. This proves the second claim in \cref{eq:inf2}, thus, concluding the proof of the theorem. \qed
	
	\appendix
	\begin{color}{black}
	\section{Proof of the FVTL}\label{apx:FVTL}
	This section is devoted to the proof of \cref{fvtl}; hence, the setting and assumptions in \cref{fvtl} are in force all throughout.
	 
Let us briefly recall that $Q=Q_N$ denotes the  transition matrix of a discrete-time irreducible Markov chain --- which we call  $(X_t)_{t\ge0}=(X_t^N)_{t\ge 0}$ ---  on $[N]$ with unique stationary distribution $\mu=\mu_N$, while $\partial\in {\rm supp}(\mu)\subseteq[N]$  represents our target state. Moreover, for every probability distribution $\nu$ on $[N]$, we let $\Q_\nu$ denote the law of chain started at $\nu$, and $\E_{\nu}$ the corresponding expectation; if $\nu=\delta_x$, we simply write $\Q_x$ and $\E_x$. Furthermore, the mixing time $t_{\rm mix}=t_{\rm mix}(Q)$ is defined as in \cref{eq:def-tmix}, and, we observe that  the following estimate for the $L^\infty$-distance-to-equilibrium for the Markov chain $Q$ holds:  for every $N \in \N$ and for every $T$ as in  \cref{eq:hp},
		\begin{align}\label{eq:def-T-linfty}
			\max_{x\in [N]}\left\|Q^T(x,\cdot)-\mu\right\|_{\rm TV}\le	\max_{\substack{x\in [N]\\
					y\in {\rm supp}(\mu)}}\left|\frac{Q^T(x,y)}{\mu(y)}-1 \right|\le \frac{1}{N}\fstop
		\end{align}

We start by recalling a  result by D.\ Aldous \cite{aldous82} (see Eqs.\ (2.1), (2.2) and (2.8), as well as Lemma 2.9 and Remark 2.18), which actually holds for a general Markov chain.
	\begin{proposition}[\cite{aldous82}]\label{lemma-aldous}
There exists a couple $(\mu_\star,\lambda_\star)=(\mu_{\star,N},\lambda_{\star,N})$, where $\lambda_\star\in(0,1)$ and $\mu_{\star}$ is a probability distribution on $[N]\setminus\{\partial\}$, satisfying 
		\begin{equation}
			\lim_{t\to\infty}\Q_{\mu}(X_t=x\mid \tau_{\partial}>t)=\mu_{\star}(y)\comma\qquad y\in[N]\setminus\{\partial\}\comma
		\end{equation}
		and
		\begin{equation}\label{eq:exp}
			\Q_{\mu_\star}(\tau_{\partial}>t )=(1-\lambda_{\star})^t\comma \qquad t \in \N\fstop
		\end{equation}
		Moreover,
		\begin{equation}\label{eq:bound-aldous}
			\left|\frac{\E_{\mu_\star}[\tau_{\partial}]}{\E_{\mu}[\tau_{\partial}]}-1 \right| \le \frac{20}{3}\ \frac{t_{\rm mix} (2+ \log(\E_\mu[\tau_\partial]))}{\E_\mu[\tau_\partial] }\fstop 
		\end{equation}
	\end{proposition}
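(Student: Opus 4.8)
The plan is to follow Aldous \cite{aldous82}: everything hinges on the substochastic matrix $R\eqdef[Q]_\partial$, obtained from $Q$ by deleting the row and column indexed by $\partial$, i.e.\ the one-step kernel of the chain killed on first hitting $\partial$, so that $Q^t$ and $R^t$ agree on $[N]\setminus\{\partial\}$ on the event $\{\tau_\partial>t\}$. First I would produce $(\mu_\star,\lambda_\star)$: since $Q$ is irreducible on $[N]$, every state reaches $\partial$, so the spectral radius $\rho\eqdef\rho(R)$ lies in $[0,1)$, and $\rho>0$ (the case $R=0$ being degenerate), whence $\lambda_\star\eqdef1-\rho\in(0,1)$. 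Perron--Frobenius for nonnegative matrices gives a left eigenvector $\mu_\star\ge0$ with $\mu_\star R=\rho\,\mu_\star$, which I normalise to a probability vector on $[N]\setminus\{\partial\}$; if $R$ fails to be irreducible one first applies Perron--Frobenius to a communicating class realising $\rho$ and then extends $\mu_\star$ backwards along the classes feeding into it, which is the only place where dropping irreducibility of $[Q]_\partial$ (cf.\ \cite[Remark 3.8]{aldous82}) costs anything. Then \cref{eq:exp} is immediate: for $t\in\N$,
\begin{equation*}
\Q_{\mu_\star}(\tau_\partial>t)=\sum_{y\ne\partial}(\mu_\star R^t)(y)=\rho^t\sum_{y\ne\partial}\mu_\star(y)=\rho^t=(1-\lambda_\star)^t\fstop
\end{equation*}
For the quasi-stationary convergence I would write $\Q_\mu(X_t=y\mid\tau_\partial>t)=(\mu R^t)(y)/\sum_{z\ne\partial}(\mu R^t)(z)$ and pass to the limit through the (Cesàro, if the dominant class is periodic) Perron--Frobenius behaviour of $R^t/\rho^t$, which approaches the spectral projector onto the $\rho$-eigenspace; since $\mu$, the stationary law of the irreducible chain $Q$, is strictly positive it charges that class, and the normalised limit equals $\mu_\star$.

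The substantive step is the bound \cref{eq:bound-aldous}, for which I would run a burn-in argument. Set $M\eqdef\E_\mu[\tau_\partial]$ and note $\E_{\mu_\star}[\tau_\partial]=\sum_{t\ge0}(1-\lambda_\star)^t=1/\lambda_\star$. For a burn-in length $T_0\asymp t_{\rm mix}\log M$ I would first show that the conditional law $\nu\eqdef\Q_\mu(X_{T_0}\in\emparg\mid\tau_\partial>T_0)$ lies within $O(1/M)$ of $\mu_\star$ in total variation, by iterating a mixing-window comparison of the killed chain with the unkilled one, each window of length $\asymp t_{\rm mix}$ losing a factor of order $\max_z Q^{t_{\rm mix}}(z,\partial)\asymp\mu(\partial)\asymp1/M$, over $\asymp\log M$ windows so that the residual drops below $1/M$. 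Feeding this into the renewal identity $M=\E_\mu[\tau_\partial\wedge T_0]+\Q_\mu(\tau_\partial>T_0)\,\E_\nu[\tau_\partial]$, together with the stationarity bound $\Q_\mu(\tau_\partial\le T_0)\le(T_0+1)\mu(\partial)$, I obtain $M\le T_0+(1+O(1/M))/\lambda_\star$ and $M\ge(1-O(T_0/M))/\lambda_\star$; a soft bootstrap then pins down $1/\lambda_\star\asymp M$ and $\mu(\partial)\asymp1/M$, so that $|M-1/\lambda_\star|\lesssim T_0\asymp t_{\rm mix}\log M$, and dividing by $\E_{\mu_\star}[\tau_\partial]=1/\lambda_\star$ yields \cref{eq:bound-aldous}. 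The explicit constant $20/3$ and the precise factor $2+\log M$ would be squeezed out of the mixing windows and the union bounds exactly as in \cite[Eqs.\ (2.1)--(2.8), Lemma 2.9, Remark 2.18]{aldous82}.

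The hard part will be the middle of the previous paragraph: total-variation mixing of $Q$ does not pass verbatim to the chain conditioned on survival $\{\tau_\partial>t\}$, so the approximation $\nu\approx\mu_\star$ cannot be read off a single application of the mixing hypothesis and must be built window by window, carefully tracking how the tiny per-step absorption probability of order $\mu(\partial)$ accumulates over the $\asymp\log M$ windows in order to keep the burn-in genuinely logarithmic in $M$ rather than polylogarithmic. A secondary, purely organisational nuisance is that $[Q]_\partial$ is not assumed irreducible, which forces the Perron--Frobenius and convergence steps to be phrased through the dominant communicating class rather than by a direct appeal to the classical theorem.
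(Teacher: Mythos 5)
First, a point of comparison: the paper does not prove \cref{lemma-aldous} at all --- it is imported verbatim from \cite{aldous82} (his Eqs.\ (2.1), (2.2), (2.8), Lemma 2.9 and Remark 2.18), and the only part used downstream is \cref{eq:bound-aldous}, via \cref{coro-aldous}. Your reconstruction of the soft part is essentially fine: taking $\lambda_\star=1-\rho$ with $\rho$ the spectral radius of the killed kernel $[Q]_\partial$ and $\mu_\star$ a nonnegative left eigenvector gives \cref{eq:exp} by exactly the one-line computation you wrote, irreducibility of $[Q]_\partial$ not being needed for existence. Two caveats, though: $\rho>0$ requires more than excluding $R=0$ (any nilpotent killed kernel gives $\rho=0$, hence $\lambda_\star=1$), and when the dominant class is periodic the limit in the first display exists only in the Ces\`aro sense, whereas the proposition asserts a genuine limit --- so this part of your sketch does not by itself reprove the statement as written; it is part of what is being borrowed from Aldous.

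The genuine gap is \cref{eq:bound-aldous}, which is the only quantitatively load-bearing claim (it is what makes $\lambda_\star T=o(1)$ in \cref{coro-aldous}). Your burn-in argument is a plan rather than a proof, and several intermediate steps do not follow from the hypotheses; note also that the proposition is a nonasymptotic statement about a fixed chain, with no standing assumption such as \cref{eq:small-mixing}. Total-variation mixing at level $1/(2e)$ at time $t_{\rm mix}$ (cf.\ \cref{eq:def-tmix}) gives no pointwise comparison of the form $\max_z Q^{t_{\rm mix}}(z,\partial)\asymp\mu(\partial)$; the claim that the survival-conditioned law after $\asymp t_{\rm mix}\log\E_\mu[\tau_\partial]$ steps is within $O(1/\E_\mu[\tau_\partial])$ of $\mu_\star$ in total variation is precisely the hard content of Aldous's lemma and is asserted, not derived (as you acknowledge); and the bootstrap conclusion ``$\mu(\partial)\asymp 1/\E_\mu[\tau_\partial]$'' is false in general, since $\mu(\partial)\,\E_\mu[\tau_\partial]=Z(\partial,\partial)$ can be arbitrarily large when the chain tends to linger at $\partial$ --- this is exactly why the FVTL carries the return factor $R$ (cf.\ \cref{lemma:abdu,eq:lambda}). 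Finally, the constant $20/3$ and the factor $2+\log\E_\mu[\tau_\partial]$ are explicitly deferred to the very reference being reproved. As it stands, you have the existence statement and \cref{eq:exp}, but not \cref{eq:bound-aldous}; either cite \cite{aldous82} for it, as the paper does, or carry out the window-by-window comparison in full.
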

	We divide the proof of  \cref{fvtl} into three auxiliary lemmas.
	 For the rest of this section, we will assume that \cref{eq:small-mixing} holds true and that the sequence $T=T_N$ satisfies \cref{eq:hp}.
	\begin{lemma}\label{lemma:abdu}
	Recalling that $R=R_{N,T}\eqdef \sum_{t=0}^t Q^t(\partial,\partial)$, we have
		\begin{equation}
			\lim_{N\to \infty}\frac{\E_\mu[\tau_{\partial}]}{R/\mu(\partial)}=1\fstop
		\end{equation}
		In particular, since $R\ge 1$, by \cref{eq:hp}, we have
		\begin{equation}\label{eq:tau-large}
			\liminf_{n\to \infty}\mu(\partial)\ \E_\mu[\tau_{\partial}]\ge 1\fstop
		\end{equation}
	\end{lemma}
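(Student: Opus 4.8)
The plan is to exploit the classical representation of the mean hitting time from stationarity in terms of the fundamental matrix, and then to show that truncating the associated series at time $T$ only costs $o(1)$. First I would record the hitting-time identity. Writing $h(x)\eqdef\E_x[\tau_\partial]$, so that $h(\partial)=0$, one checks that $h$ solves $(I-Q)h=\mathbf 1-\mu(\partial)^{-1}e_\partial$, where the right-hand side has $\mu$-mean zero by Kac's formula $\E_\partial[\tau_\partial^{+}]=\mu(\partial)^{-1}$. Applying the fundamental matrix $Z\eqdef\sum_{t\ge 0}\ttonde{Q^t-\mathbf 1\mu^{\top}}$ --- which converges by ergodicity and satisfies $(I-Q)Z=I-\mathbf 1\mu^{\top}$, $Z\mathbf 1=0$, $\mu^{\top}Z=0$ --- and pinning the additive constant by $h(\partial)=0$, one gets $h(x)=\ttonde{Z_{\partial\partial}-Z_{x\partial}}/\mu(\partial)$, hence, averaging over $x\sim\mu$ and using $\mu^{\top}Z=0$,
\begin{equation*}
	\mu(\partial)\,\E_\mu[\tau_\partial]=Z_{\partial\partial}=\sum_{t\ge 0}\ttonde{Q^t(\partial,\partial)-\mu(\partial)}\fstop
\end{equation*}
(Equivalently, this is the identity $\E_\pi[\tau_j]=Z_{jj}/\pi_j$ of \cite{aldous-fill-2014}.)

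Next I would control the tail of this series. Abbreviating $u_t\eqdef Q^t(\partial,\partial)$ and $m\eqdef\mu(\partial)$, one has
\begin{equation*}
	R-Z_{\partial\partial}=\sum_{t=0}^{T}u_t-\sum_{t\ge 0}(u_t-m)=(T+1)\,m-\sum_{t>T}(u_t-m)\comma
\end{equation*}
so it is enough to bound $\sum_{t>T}\abs{u_t-m}$. To this end I would iterate the $L^{\infty}$-mixing bound \cref{eq:def-T-linfty}, proving by induction on $k\ge 1$ that $\abs{Q^{t}(z,y)-\mu(y)}\le\mu(y)\,N^{-k}$ for all $t\ge kT$ and $y,z\in{\rm supp}(\mu)$; the inductive step writes $Q^{T+s}(z,y)-\mu(y)=\sum_{w}\ttonde{Q^{T}(z,w)-\mu(w)}\ttonde{Q^{s}(w,y)-\mu(y)}$ (legitimate since $\sum_w\ttonde{Q^T(z,w)-\mu(w)}=0$) and estimates the two factors by \cref{eq:def-T-linfty} and the induction hypothesis respectively. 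Summing over $t>T$ in consecutive blocks of length $T$ then gives $\sum_{t>T}\abs{u_t-m}\le m\,T\sum_{k\ge 1}N^{-k}=\tfrac{mT}{N-1}$, so $\abs{R-Z_{\partial\partial}}\le(T+1)\,m+\tfrac{mT}{N-1}\le(2T+1)\,\mu(\partial)$, which vanishes as $N\to\infty$ by \cref{eq:hp}.

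Finally, since $R\ge u_0=1$, one concludes $\abs{Z_{\partial\partial}/R-1}\le\abs{Z_{\partial\partial}-R}/R\le\abs{Z_{\partial\partial}-R}\to 0$, i.e.\ $\E_\mu[\tau_\partial]/(R/\mu(\partial))=Z_{\partial\partial}/R\to 1$; and then $\mu(\partial)\,\E_\mu[\tau_\partial]=Z_{\partial\partial}=R\,(1+o(1))\ge 1-o(1)$ since $R\ge 1$, yielding $\liminf_{n}\mu(\partial)\,\E_\mu[\tau_\partial]\ge 1$. The only step that is more than bookkeeping is the upgrade, in the middle paragraph, of the single mixing estimate \cref{eq:def-T-linfty} to geometric equilibration past time $T$ via the submultiplicativity-type induction --- together with getting the compensator (equivalently, Kac's formula) right in the hitting-time identity; everything else is summation.
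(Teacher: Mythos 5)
Your proposal is correct and takes essentially the same route as the paper: both rest on the fundamental-matrix identity $\E_\mu[\tau_{\partial}]=Z(\partial,\partial)/\mu(\partial)$ (which the paper simply cites from Aldous--Fill, Lemma 2.1) and on writing $Z(\partial,\partial)=R-(T+1)\mu(\partial)+\sum_{t>T}\ttonde{Q^t(\partial,\partial)-\mu(\partial)}$, with the tail killed by \cref{eq:def-T-linfty} together with submultiplicativity of the $L^\infty$-distance and the error $(T+1)\mu(\partial)=o(1)$ from \cref{eq:hp}. The only difference is that you re-derive the cited identity and prove the submultiplicativity by an explicit induction (legitimate, since ${\rm supp}(\mu)$ is closed under $Q$, so the intermediate sum never leaves the support), where the paper just invokes these facts.
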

	\begin{proof}
		By \cite[Lemma 2.1]{aldous-fill-2014}, we have
		$$\E_\mu[\tau_\partial]=\frac{Z(\partial,\partial)}{\mu(\partial)}\comma$$
		where $Z$ is the so called \emph{fundamental matrix} defined as
		\begin{equation}
			Z(x,y):=\sum_{t=0}^{\infty}\tonde{Q^t(x,y)-\mu(y)}\comma\qquad x,y\in[N]\fstop
		\end{equation}
		Observe that
		\begin{equation}
			Z(\partial,\partial)=\(\sum_{t=0}^{T}Q^t(\partial,\partial)\)-\tonde{T+1}\mu(\partial)+\sum_{t>T}\tonde{Q^t(\partial,\partial)-\mu(\partial)}=R\left(1+o(1)\right)\comma
		\end{equation}
		where in the last equality we used \cref{eq:hp}, $R\ge 1$, \cref{eq:def-T-linfty}, and the submultiplicativity of the $L^{\infty}$-distance.
	\end{proof}
\begin{lemma}\label{coro-aldous}
The quantity  $\lambda_\star\in (0,1)$ in \cref{lemma-aldous} satisfies
	\begin{equation}
	(1-	\lambda_{\star})^T=1+o(1	)\fstop
	\end{equation}
\end{lemma}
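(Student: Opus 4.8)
The plan is to extract $\lambda_\star$ from the geometric law \cref{eq:exp}, compare $\E_{\mu_\star}[\tau_\partial]=1/\lambda_\star$ with $\E_\mu[\tau_\partial]$ through \cref{eq:bound-aldous}, and then feed in the asymptotics of $\E_\mu[\tau_\partial]$ from \cref{lemma:abdu} together with the standing hypotheses \cref{eq:small-mixing,eq:hp}.

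\emph{Step 1: reduction.} By \cref{eq:exp}, under $\Q_{\mu_\star}$ the hitting time $\tau_\partial$ is geometric with success probability $\lambda_\star$, hence $\E_{\mu_\star}[\tau_\partial]=1/\lambda_\star$. It therefore suffices to prove that $T\lambda_\star\to 0$, since then Bernoulli's inequality gives $1\ge (1-\lambda_\star)^T\ge 1-T\lambda_\star$, whence $(1-\lambda_\star)^T=1+o(1)$.

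\emph{Step 2: the right-hand side of \cref{eq:bound-aldous} vanishes.} By \cref{lemma:abdu}, $\E_\mu[\tau_\partial]=(1+o(1))\,R/\mu(\partial)$ with $R\ge 1$; write $\mu_{\min}\eqdef\min_{z\in{\rm supp}(\mu)}\mu(z)$, so that $\mu(\partial)\ge \mu_{\min}$ because $\partial\in{\rm supp}(\mu)$. Since $R=\sum_{t=0}^T Q^t(\partial,\partial)\le T+1$, and $\mu(\partial)T\to 0$ forces $T=o(1/\mu_{\min})$ (hence $\log(T+1)\le \log(1/\mu_{\min})$ for $N$ large), we obtain
\begin{equation*}
\log \E_\mu[\tau_\partial]\le o(1)+\log(T+1)+\log\frac{1}{\mu_{\min}}\le 2\log\frac{1}{\mu_{\min}}+o(1)\fstop
\end{equation*}
Plugging this together with $\E_\mu[\tau_\partial]\ge (1+o(1))/\mu(\partial)$ into \cref{eq:bound-aldous} yields
\begin{equation*}
\abs{\frac{\E_{\mu_\star}[\tau_\partial]}{\E_\mu[\tau_\partial]}-1}\le (1+o(1))\,\frac{20}{3}\,t_{\rm mix}\,\mu(\partial)\tonde{2+2\log\frac{1}{\mu_{\min}}}\underset{N\to\infty}\longrightarrow 0\comma
\end{equation*}
the limit being a consequence of \cref{eq:small-mixing}, which controls $t_{\rm mix}\,\mu(\partial)\log(1/\mu_{\min})$ and, a fortiori, $t_{\rm mix}\,\mu(\partial)$.

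\emph{Step 3: conclusion.} Step 2 gives $\E_{\mu_\star}[\tau_\partial]=(1+o(1))\,\E_\mu[\tau_\partial]=(1+o(1))\,R/\mu(\partial)$, so that $\lambda_\star=1/\E_{\mu_\star}[\tau_\partial]=(1+o(1))\,\mu(\partial)/R\le (1+o(1))\,\mu(\partial)$, using $R\ge 1$. Hence $T\lambda_\star\le (1+o(1))\,\mu(\partial)\,T\to 0$ by \cref{eq:hp}, and Step 1 concludes. The only mildly delicate point is Step 2 — ensuring that the logarithmic factor $\log \E_\mu[\tau_\partial]$ appearing in \cref{eq:bound-aldous} is only $O(\log(1/\mu_{\min}))$ — which is exactly where the second condition in \cref{eq:hp} (that $T$, and therefore $R$, does not outgrow $1/\mu_{\min}$) enters; the rest is routine bookkeeping.
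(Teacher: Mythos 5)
Your proof is correct and follows essentially the same route as the paper, whose one-line argument combines $\E_{\mu_\star}[\tau_\partial]=1/\lambda_\star$ from \cref{eq:exp} with \cref{eq:tau-large,eq:bound-aldous,eq:hp} to get $\lambda_\star T=o(1)$; you simply spell out the bookkeeping, with a minor variant in bounding $\log\E_\mu[\tau_\partial]$ (via $R\le T+1$ and $T=o(1/\mu_{\min})$ instead of just the lower bound $\E_\mu[\tau_\partial]\gtrsim 1/\mu(\partial)$ and monotonicity of $x\mapsto\log(x)/x$). No gaps beyond trivial degenerate cases that the paper also ignores.
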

\begin{proof}
	By \cref{eq:exp}, we have
	\begin{equation}
		\E_{\mu_{\star}}[\tau_{\partial}]=\frac{1}{\lambda_\star}\fstop
	\end{equation}
Then, we obtain $\lambda_\star T=o(1)$ by  \cref{eq:tau-large,eq:bound-aldous,eq:hp}, from which the result follows.
\end{proof}
	
	\begin{lemma}\label{lemma:eq-is-far}
	
		\begin{equation}\label{eq:eq-is-far}
			\lim_{N\to \infty}\sup_{k\in\N}\frac{\max_{x\in [N]}\Q_x(\tau_{\partial}>kT)}{\Q_\mu(\tau_{\partial}>kT)}=1\fstop
		\end{equation}
	\end{lemma}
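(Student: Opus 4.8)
The plan is to turn the meeting‑time tails into a renewal estimate: one shows that from any starting state the survival probability over $kT$ steps is, up to a $1+o(1)$ factor, the same as from stationarity, by peeling off the first block of $T$ steps. The lower bound $\sup_k(\cdots)\ge1$ is free, since $\Q_\mu(\tau_\partial>kT)=\sum_x\mu(x)\Q_x(\tau_\partial>kT)\le\max_x\Q_x(\tau_\partial>kT)$, so only the matching upper bound needs work. Put $m_k:=\Q_\mu(\tau_\partial>kT)$ and $M_k:=\max_{x\in[N]}\Q_x(\tau_\partial>kT)$; note $m_k\ge(\min_{z\in{\rm supp}(\mu)}\mu(z))(1-\lambda_\star)^{kT}>0$ from \cref{lemma-aldous}, so the ratio is well defined. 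First I would record a \emph{one‑block estimate}: decomposing at time $T$, bounding $\Q_x(\tau_\partial>T,X_T=y)\le Q^T(x,y)$, using $Q^T(x,y)\le(1+N^{-1})\mu(y)$ on ${\rm supp}(\mu)$ and $\sum_{y\notin{\rm supp}(\mu)}Q^T(x,y)\le\|Q^T(x,\cdot)-\mu\|_{\rm TV}\le N^{-1}$ from \cref{eq:def-T-linfty}, bounding $\Q_y(\tau_\partial>(k-1)T)$ by $M_{k-1}$ off ${\rm supp}(\mu)$, and using $\sum_{y\neq\partial}\mu(y)\Q_y(\tau_\partial>(k-1)T)=m_{k-1}$ (stationarity, and $\mu$ does not charge transient states), one gets
\[
M_k\ \le\ (1+N^{-1})\,m_{k-1}+N^{-1}M_{k-1},\qquad k\ge1.
\]
Granting that
\begin{equation*}
\delta_N:=\sup_{k\ge1}\Big(\tfrac{m_{k-1}}{m_k}-1\Big)\xrightarrow[N\to\infty]{}0,\tag{$\ast$}
\end{equation*}
a routine induction on $k$ (from $M_0=1$, $m_0=1-\mu(\partial)$) upgrades the one‑block estimate to $M_k\le(1+\eta_N)m_k$ for all $k$, with $\eta_N=o(1)$ depending only on $\delta_N$ and $N$. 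Thus everything reduces to $(\ast)$.

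For $(\ast)$ I would split on the size of $m_k$. Stationarity of $\mu$ gives $m_{k-1}-m_k=\Q_\mu(\tau_\partial\in((k-1)T,kT])\le\sum_{t=(k-1)T+1}^{kT}\Q_\mu(X_t=\partial)=T\mu(\partial)=o(1)$ by \cref{eq:hp}; hence $m_{k-1}/m_k\le1+2T\mu(\partial)=1+o(1)$ on the whole range where $m_k\ge\tfrac12$ (this covers $k\le1$, as $m_1\ge1-(T+1)\mu(\partial)$). For the remaining $k$, write $m_k/m_{k-1}=\Q_{\hat\pi_{k-1}}(\tau_\partial>T)$, where $\hat\pi_j:=\Q_\mu(X_{jT}\in\cdot\mid\tau_\partial>jT)$ is a probability on $[N]\setminus\{\partial\}$. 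Since $\nu\mapsto\Q_\nu(\tau_\partial>T)$ is $1$‑Lipschitz for total variation and Aldous' quasi‑stationary measure $\mu_\star$ of \cref{lemma-aldous} has $\Q_{\mu_\star}(\tau_\partial>T)=(1-\lambda_\star)^T=1+o(1)$ by \cref{coro-aldous}, one has $m_k/m_{k-1}\ge(1-\lambda_\star)^T-\|\hat\pi_{k-1}-\mu_\star\|_{\rm TV}$, so $(\ast)$ follows once
\begin{equation*}
\sup_{j\ge1}\|\hat\pi_j-\mu_\star\|_{\rm TV}\xrightarrow[N\to\infty]{}0.\tag{$\ast\ast$}
\end{equation*}

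To prove $(\ast\ast)$, the relevant structure is that $\hat\pi_{j+1}=\Phi(\hat\pi_j)$ with $\Phi(\nu):=\Q_\nu(X_T\in\cdot\mid\tau_\partial>T)$, that $\mu_\star$ is the \emph{exact} fixed point of $\Phi$ (equivalently a left Perron vector of $[Q]_\partial$, by \cref{lemma-aldous}), and that, $T$ being at least $2t_{\rm mix}\log(1/\min_z\mu(z))$, one application of $\Phi$ brings any initial law to within $o(1)$ of $\mu_\star$. The reasons are: (i) the un‑killed evolution forgets its initial law after $T$ steps, up to total‑variation error $\le N^{-1}$ (\cref{eq:def-T-linfty} and submultiplicativity); (ii) $[Q]_\partial$ differs from $Q$ only in the row and column of $\partial$, so its relaxation time to the quasi‑stationary distribution is $O(t_{\rm mix})\ll T$; and (iii) the conditioning $\{\tau_\partial>T\}$ is only a mild perturbation, since the $\mu$‑mass of the states from which $\partial$ is reached within $t_{\rm mix}$ steps is at most $\Q_\mu(\tau_\partial\le t_{\rm mix})\le(t_{\rm mix}+1)\mu(\partial)=o(1)$ by \cref{eq:small-mixing}. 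Feeding in $\hat\pi_0=\Q_\mu(X_0\in\cdot\mid X_0\neq\partial)$, which is $\mu(\partial)$‑close to $\mu$ in total variation, and invoking the plain convergence $\hat\pi_j\to\mu_\star$ from the quasi‑stationary theory behind \cref{lemma-aldous}, one obtains $(\ast\ast)$, and hence \cref{eq:eq-is-far}.

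The one genuinely delicate step is $(\ast\ast)$: a \emph{quantitative}, uniform‑over‑the‑starting‑law bound on the speed at which the killed chain settles into its quasi‑stationary distribution, expressed solely through $t_{\rm mix}$ and $\mu(\partial)$. The obstruction is the behaviour of states near $\partial$, where conditioning on $\{\tau_\partial>T\}$ is a severe conditioning; it is neutralized by the stationary‑mass bound $\Q_\mu(\tau_\partial\le t_{\rm mix})\le(t_{\rm mix}+1)\mu(\partial)$, which keeps such states negligible under every $\hat\pi_j$. Everything else — the one‑block estimate, the inductive transfer to arbitrary (including transient) starting states, and the $m_k\ge\tfrac12$ regime — is bookkeeping with \cref{eq:def-T-linfty} and the stationarity of $\mu$.
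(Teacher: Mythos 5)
Your first half coincides with the paper's argument: the lower bound is trivial, and your ``one-block estimate'' $M_k\le(1+N^{-1})m_{k-1}+N^{-1}M_{k-1}$ is exactly the recursion the paper derives by decomposing at time $T$, splitting over $y\in{\rm supp}(\mu)$ and $y\notin{\rm supp}(\mu)$, and invoking \cref{eq:def-T-linfty}; the subsequent iteration that converts this recursion into $M_k\le(1+o(1))m_k$, \emph{granting} your claim $(\ast)$ that $\sup_{k}\bigl(m_{k-1}/m_k-1\bigr)\to0$, is also the same bookkeeping as in the paper. The difference is how $(\ast)$ is obtained: the paper does not prove it but imports it from \cite[Lemma 3.6]{manzo_quattropani_scoppola_2021} (the display \cref{eq:mqs}), whereas you attempt to prove it from scratch, and this is where there is a genuine gap.

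Your reduction of $(\ast)$ to $(\ast\ast)$, i.e.\ to $\sup_{j\ge1}\|\hat\pi_j-\mu_\star\|_{\rm TV}\to0$, is fine, but $(\ast\ast)$ itself is only asserted, not proved. Concretely: (a) the claim that the killed chain $[Q]_\partial$ relaxes to its quasi-stationary distribution in $O(t_{\rm mix})$ steps ``because it differs from $Q$ only in the row and column of $\partial$'' has no justification --- this is precisely the delicate content of the quasi-stationary analysis (note $[Q]_\partial$ need not even be irreducible, cf.\ the remark preceding \cref{apx:FVTL}), and no quantitative comparison between its relaxation and $t_{\rm mix}$ is derived; (b) the bound $\Q_\mu(\tau_\partial\le t_{\rm mix})\le(t_{\rm mix}+1)\mu(\partial)$ controls mass near $\partial$ under $\mu$, not under the conditioned laws $\hat\pi_j$, and showing that every $\hat\pi_j$ gives negligible weight to states from which $\partial$ is hit quickly is essentially the statement you are trying to prove, so the argument is circular as written; (c) the ``plain convergence'' $\hat\pi_j\to\mu_\star$ supplied by \cref{lemma-aldous} is a fixed-$N$, $j\to\infty$ statement and gives no control that is uniform in $j$ (in particular nothing for $j=1$) as $N\to\infty$, which is exactly the uniformity $(\ast\ast)$ requires. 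As it stands, $(\ast)$ --- and hence the lemma --- is unproved; you should either cite \cite[Lemma 3.6]{manzo_quattropani_scoppola_2021} as the paper does, or replace the heuristics (i)--(iii) by an actual quantitative estimate on the approach to quasi-stationarity in terms of $t_{\rm mix}$ and $\mu(\partial)$.
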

	\begin{proof}
		Clearly, it suffices to prove that the limit in \cref{eq:eq-is-far} is $\le 1$, because the other inequality is trivial. Moreover, by a union bound and \cref{eq:hp}.
		\begin{equation}\label{eq:union-b}
			\Q_{\mu}(\tau_{\partial}>T)\ge 1-(T+1)\mu(\partial)=1+o(1)\fstop
		\end{equation}
		Hence, we restrict the attention to $k\ge 2$.  By the strong Markov property, we get, for all $x \in [N]$,
		\begin{equation}\label{eq:cite}
			\begin{split}
			\Q_x(\tau_\partial>kT)&=\sum_{\substack{y \in {\rm supp}(\mu)\\
					y\neq   \partial}}\Q_x(X_T=y,\:\tau_\partial>T)\,\Q_y(\tau_\partial>(k-1)T)\\
				&\qquad\qquad +\sum_{y \not\in {\rm supp}(\mu)}\Q_x(X_T=y)\,\Q_y(\tau_\partial>(k-1)T)\\
			&\le\sum_{y\in {\rm supp}(\mu)}\Q_x(X_T=y)\,\Q_y(\tau_\partial>(k-1)T)\\
			&\qquad\qquad+\sum_{y \not\in {\rm supp}(\mu)}\Q_x(X_T=y)\,\Q_y(\tau_\partial>(k-1)T)\\
			&=\left(1+o(1)\right)\Q_\mu(\tau_\partial>(k-1)T)+\sum_{y \not\in {\rm supp}(\mu)}\Q_x(X_T=y)\,\Q_y(\tau_\partial>(k-1)T)\comma
			\end{split}
		\end{equation}
		where in the third step we used \cref{eq:def-T-linfty}. We can bound the last sum in \cref{eq:cite} by
		\begin{equation}\label{eq:cite2}
			\begin{split}
			\sum_{y \not\in {\rm supp}(\mu)}\Q_x(X_T=y)&\,\Q_y(\tau_\partial>(k-1)T)\\&\le  \max_{x\in[N]}\Q_x(X_T\not\in{\rm supp}(\mu))\ \max_{y\in [N]}\Q_{y}(\tau_\partial>(k-1)T)\\
			&\le \frac1N \max_{y\in [N]}\Q_{y}(\tau_\partial>(k-1)T)\comma
			\end{split}
		\end{equation}
	where in the last step we used again \cref{eq:def-T-linfty}.
	Now call
	\begin{equation}
		f_k\coloneqq \max_{x\in [N]}\Q_x(\tau_{\partial}>kT)\comma \qquad g_k\coloneqq \Q_\mu(\tau_{\partial}>kT)\comma
	\end{equation}
and notice that, plugging \cref{eq:cite2} into \cref{eq:cite} and  taking the maximum over $x\in[N]$, we obtain
\begin{equation}
	f_k\le (1+o(1))g_{k-1}+\frac{1}{N}f_{k-1}\fstop
\end{equation}
It follows by iteration that, for all $N$ sufficiently large,
\begin{equation}\label{eq:iter}
	f_k\le (1+o(1))g_{k-1}+2\sum_{j=1}^{k-1} \frac{1}{N^j}\ g_{k-1-j}\fstop
\end{equation}
		Thanks to \cite[Lemma 3.6]{manzo_quattropani_scoppola_2021}, we also have
		\begin{equation}\label{eq:mqs}
			\lim_{N\to \infty}\sup_{k\in\N}\frac{\Q_\mu(\tau_\partial>(k-1)T)}{\Q_\mu(\tau_\partial>kT)}=\lim_{N\to \infty}\sup_{k\in\N}\frac{g_{k-1}}{g_{k}}=1\comma
		\end{equation}
	hence, for all $N$ sufficiently large,
	\begin{equation}
		g_{j-1}\le 2 g_{j}\quad\Longrightarrow\quad g_{j}\le 2^{k-j-1}g_{k-1}\comma
	\end{equation}
therefore
\begin{equation}
\sum_{j=1}^{k-1} \frac{1}{N^j}\ g_{k-1-j}\le g_{k-1}\sum_{j=1}^{k-1}  \(\frac{2}{N}\)^{j}=o(g_{k-1})\comma
\end{equation}
		from which, together with \cref{eq:iter,eq:mqs}, the desired claim follows.
	\end{proof}
	
	\begin{proof}[Proof of \cref{fvtl}]
		Let $\lambda_\star\in (0,1)$ be as in \cref{lemma-aldous}. By \cref{coro-aldous} and \cref{eq:union-b},
 it is enough to focus on the case $t\ge T$. Moreover, again by \cref{coro-aldous} and by the monotonicity in $t$ of the probabilities under consideration, it suffices to check the validity of \cref{eq:geometric0} for $t=kT$ with $k\ge 2$.
		
		First we prove the lower bound:
		\begin{equation}\label{eq:fvtl-lb}
			\liminf_{N\to\infty} \inf_{k\ge 2}\frac{\Q_{\mu}(\tau_{\partial}>kT)}{(1-\lambda_\star)^{kT}}\ge 1\fstop
		\end{equation}
		Note that, for all $x\in[N]$, we have
		\begin{equation}\label{eq:bound-evo}
			\Q_{\mu_\star}(X_T=x)= (1-\lambda_\star)^T\mu_\star(x)+\lambda_\star\ \sum_{s=1}^T(1-\lambda_\star)^{s-1}\,\Q_{\partial}(X_{T-s}=x)
			\ge (1-\lambda_\star)^T \mu_\star(x)\fstop
		\end{equation}
		Furthermore, by \cref{eq:def-T-linfty,eq:bound-evo,coro-aldous},
		\begin{align*}
			\Q_{\mu}(\tau_\partial>kT)&=\(1+o(1)\) \Q_{\mu_\star Q^T }(\tau_\partial>kT)\\
			&\ge \tonde{1+o(1)} (1-\lambda_\star)^T \Q_{\mu_\star}(\tau_\partial>kT)\\
			&=\tonde{1+o(1)}(1-\lambda_\star)^{(k+1)T}\\
			&=\tonde{1+o(1)} (1-\lambda_\star)^{kT}\comma
		\end{align*}
		and \cref{eq:fvtl-lb} follows. 
		
		We now show the upper bound:
		\begin{equation}\label{eq:fvtl-ub}
			\limsup_{N\to\infty} \sup_{k\ge 2}\frac{\Q_{\mu}(\tau_{\partial}>kT)}{(1-\lambda_\star)^{kT}}\le 1\fstop
		\end{equation}
		For every $x\in[N]$, we have (cf.\ \cref{eq:bound-evo})
		\begin{equation}\label{eq:ub}
			\Q_{\mu_\star}(X_T=x)
			\le (1-\lambda_\star)^T\mu_\star(x)+\lambda_\star\ \E_\partial[L_T(x)]\comma
		\end{equation}
		where $L_T(x)$ denotes the local time spent by the chain in the state $x$ within time $T$, i.e.,
		\begin{equation}
			L_T(x):=\sum_{s=1}^T\ind(X_t=x)\fstop
		\end{equation}
		Clearly, 
		\begin{equation}\label{eq:blue}
			\sum_{x\in[N]}\E_\partial[L_T(x)]= T\fstop
		\end{equation}
		Therefore, 
		\begin{align*}
		&\Q_\mu(\tau_\partial>kT)\\
			\text{\cref{eq:def-T-linfty}}\Longrightarrow		&\qquad=\tonde{1+o(1)} Q_{\mu_\star Q^T}(\tau_\partial>kT)\\
			\text{\cref{eq:ub}}\Longrightarrow	&\qquad\le\tonde{1+o(1)} \sum_{x\in {\rm supp}(\mu)	}(1-\lambda_\star)^T\mu_\star(x)\,\Q_x(\tau_\partial>kT)\\
			&\qquad\qquad+
			\tonde{1+o(1)} \lambda_\star\ \sum_{x\in {\rm supp}(\mu)}\E_\partial[L_T(x)]\,\Q_x(\tau_\partial>kT)\\
			\text{\cref{eq:blue}}\Longrightarrow&\qquad	\le \tonde{1+o(1)} (1-\lambda_\star)^{(k+1)T}+\tonde{1+o(1)} \lambda_{\star}T\ \max_{x\in {\rm supp}(\mu)}\Q_x(\tau_\partial>kT)\\
			\text{ \cref{lemma:eq-is-far,coro-aldous}}\Longrightarrow&\qquad	= (1+o(1))(1-\lambda_\star)^{kT}+o\(\Q_\mu(\tau_\partial>kT)\)\comma
		\end{align*}
		from which \cref{eq:fvtl-ub} follows. This concludes the proof of \cref{fvtl}.
	\end{proof}
\begin{remark}\label{rmk:general}
	\textit{A posteriori}, thanks to \cref{eq:geometric0} in \cref{fvtl}, the claim of \cref{lemma:eq-is-far} generalizes as follows:
	\begin{equation}\label{eq:eq-is-far-2}
		\lim_{N\to \infty}\sup_{t\ge 0}\frac{\max_{x\in [N]}\Q_x(\tau_{\partial}>t)}{\Q_\mu(\tau_{\partial}>t)}=1\fstop
	\end{equation}
\end{remark}
\end{color}

	\subsection*{Acknowledgments}
	The authors wish to thank  Guillem Perarnau for  pointing out the reference \cite{FR2017}, and the anonymous referees for their careful reading of our manuscript.
	During an early stage of this work, M.Q.\ was supported by the European Union's Horizon 2020 research and innovation programme under the Marie Sk\l{}odowska-Curie grant agreement no.\ 945045, and by the NWO Gravitation project NETWORKS under grant no.\ 024.002.003. Moreover, M.Q. thanks the German Research Foundation (project number 444084038, priority program SPP2265) for financial support.
	F.S.\ 	gratefully acknowledges funding by the Lise Meitner fellowship, Austrian Science Fund (FWF):
	M3211.
	
	\bibliographystyle{alpha}

\begin{thebibliography}{KMTS19}
		
		\bibitem[Ald82]{aldous82}
		David J. Aldous,
		\newblock 	Markov chains with almost exponential hitting times.
		\newblock {\em Stochastic Process. Appl.}, 13(3):305--310, 1982.
		
	
			\bibitem[ABBP20]{addario-berry_diameter2020}
		Louigi Addario-Berry, Borja Balle, and Guillem Perarnau.
		\newblock Diameter and stationary distribution of random {$r$}-out digraphs.
		\newblock {\em Electron. J. Combin.}, 27(3):Paper No. 3.28, 41, 2020.
		
		\bibitem[AF02]{aldous-fill-2014}
		David Aldous and James~Allen Fill.
		\newblock {R}eversible {M}arkov {C}hains and {R}andom {W}alks on {G}raphs,
		2002.
		\newblock Unfinished monograph, recompiled 2014, available at
		\url{http://www.stat.berkeley.edu/$\sim$aldous/RWG/book.html}.
		
		\bibitem[Ang81]{angluin1981}
		Dana Angluin.
		\newblock A note on the number of queries needed to identify regular languages.
		\newblock {\em Inform. and Control}, 51(1):76--87, 1981.
		
		\bibitem[BCL19]{beltran_chavez_landim2019}
		J.~Beltr\'{a}n, E.~Chavez, and C.~Landim.
		\newblock From coalescing random walks on a torus to {K}ingman's coalescent.
		\newblock {\em J. Stat. Phys.}, 177(6):1172--1206, 2019.
		
		\bibitem[BCS18]{BCSrwrd}
		Charles Bordenave, Pietro Caputo, and Justin Salez.
		\newblock Random walk on sparse random digraphs.
		\newblock {\em Probab. Theory Related Fields}, 170(3-4):933--960, 2018.
		
		\bibitem[BCS19]{bordenave_cutoff2019}
		Charles Bordenave, Pietro Caputo, and Justin Salez.
		\newblock Cutoff at the ``entropic time'' for sparse {M}arkov chains.
		\newblock {\em Probab. Theory Related Fields}, 173(1-2):261--292, 2019.
		
		\bibitem[CCPQ21]{CCPQ2021}
		Xing~Shi Cai, Pietro Caputo, Guillem Perarnau, and Matteo Quattropani.
		\newblock Rankings in directed configuration models with heavy tailed
		in-degrees.
			\newblock {\em Ann. Appl. Probab.},
		 to appear.
		
		\bibitem[CEOR13]{cooper_elsasser_ono_radzik_2013}
		Colin Cooper, Robert Els\"{a}sser, Hirotaka Ono, and Tomasz Radzik.
		\newblock Coalescing random walks and voting on connected graphs.
		\newblock {\em SIAM J. Discrete Math.}, 27(4):1748--1758, 2013.
		
		\bibitem[{\v{C}}er64]{cerny1964}
		J{\'a}n {\v{C}}ern{\`y}.
		\newblock Pozn{\'a}mka k homog{\'e}nnym experimentom s kone{\v{c}}n{\`y}mi
		automatmi.
		\newblock {\em Matematicko-fyzik{\'a}lny {\v{c}}asopis}, 14(3):208--216, 1964.
		
		\bibitem[CF04]{cooper_frieze_2004}
		Colin Cooper and Alan Frieze.
		\newblock The size of the largest strongly connected component of a random
		digraph with a given degree sequence.
		\newblock {\em Combin. Probab. Comput.}, 13(3):319--337, 2004.
		
		\bibitem[CF05]{cooper_frieze_2005}
		Colin Cooper and Alan Frieze.
		\newblock The cover time of random regular graphs.
		\newblock {\em SIAM J. Discrete Math.}, 18(4):728--740, 2005.
		
		\bibitem[CF07]{cooper_frieze_2007}
		Colin Cooper and Alan Frieze.
		\newblock The cover time of sparse random graphs.
		\newblock {\em Random Structures Algorithms}, 30(1-2):1--16, 2007.
		
		\bibitem[CF08]{cooper_frieze_2008}
		Colin Cooper and Alan Frieze.
		\newblock The cover time of the giant component of a random graph.
		\newblock {\em Random Structures Algorithms}, 32(4):401--439, 2008.
		
		\bibitem[CFR10]{cooper_frieze_radzik_2009}
		Colin Cooper, Alan Frieze, and Tomasz Radzik.
		\newblock Multiple random walks in random regular graphs.
		\newblock {\em SIAM J. Discrete Math.}, 23(4):1738--1761, 2009/10.
		
		\bibitem[Cox89]{cox_coalescing1989}
		J.~T. Cox.
		\newblock Coalescing random walks and voter model consensus times on the torus
		in {${\bf Z}^d$}.
		\newblock {\em Ann. Probab.}, 17(4):1333--1366, 1989.
		
		\bibitem[CP20]{CPminimum2020}
		Xing~Shi Cai and Guillem Perarnau.
		\newblock Minimum stationary values of sparse random directed graphs.
		\newblock {\em arXiv:2010.07246}, 2020.
		
		\bibitem[CQ20]{caputo_quattropani_2020}
		Pietro Caputo and Matteo Quattropani.
		\newblock Stationary distribution and cover time of sparse directed
		configuration models.
		\newblock {\em Probab. Theory Related Fields}, 178(3-4):1011--1066, 2020.
		
		\bibitem[CQ21a]{caputo_quattropani_2021_RSA}
		Pietro Caputo and Matteo Quattropani.
		\newblock Mixing time of {P}age{R}ank surfers on sparse random digraphs.
		\newblock {\em Random Structures Algorithms}, 59(3):376--406, 2021.
		
		\bibitem[CQ21b]{caputo_quattropani_2021_SPA}
		Pietro Caputo and Matteo Quattropani.
		\newblock Mixing time trichotomy in regenerating dynamic digraphs.
		\newblock {\em Stochastic Process. Appl.}, 137:222--251, 2021.
		
		\bibitem[FR17]{FR2017}
		Benjamin Fish and Lev Reyzin.
		\newblock {O}pen {P}roblem: {M}eeting {T}imes for {L}earning {R}andom
		{A}utomata.
		\newblock In Satyen Kale and Ohad Shamir, editors, {\em Proceedings of the 2017
			Conference on Learning Theory}, volume~65 of {\em Proceedings of Machine
			Learning Research}, pages 8--11. PMLR, 2017.
		
		\bibitem[HU79]{hopcroft2001introduction}
		John~E. Hopcroft and Jeffrey~D. Ullman.
		\newblock {\em Introduction to automata theory, languages, and computation}.
		\newblock Addison-Wesley Series in Computer Science. Addison-Wesley Publishing
		Co., Reading, Mass., 1979.
		
		\bibitem[KMTS19]{kanade_mallmann-trenn_sauerwald2019}
		Varun Kanade, Frederik Mallmann-Trenn, and Thomas Sauerwald.
		\newblock On coalescence time in graphs: when is coalescing as fast as meeting?
		\newblock In {\em Proceedings of the {T}hirtieth {A}nnual {ACM}-{SIAM}
			{S}ymposium on {D}iscrete {A}lgorithms}, pages 956--965. SIAM, Philadelphia,
		PA, 2019.
		
		\bibitem[MP43]{mcculloch1943logical}
		Warren~S. McCulloch and Walter Pitts.
		\newblock A logical calculus of the ideas immanent in nervous activity.
		\newblock {\em Bull. Math. Biophys.}, 5(4):115--133, 1943.
		
		\bibitem[MQS21]{manzo_quattropani_scoppola_2021}
		Francesco Manzo, Matteo Quattropani, and Elisabetta Scoppola.
		\newblock A probabilistic proof of {C}ooper \& {F}rieze's ``{F}irst visit time
		lemma''.
		\newblock {\em ALEA Lat. Am. J. Probab. Math. Stat.}, 18(2):1739--1758, 2021.
		
		\bibitem[Nic16]{nicaud_1}
		Cyril Nicaud.
		\newblock Fast synchronization of random automata.
		\newblock In {\em Approximation, randomization, and combinatorial optimization.
			{A}lgorithms and techniques}, volume~60 of {\em LIPIcs. Leibniz Int. Proc.
			Inform.}, pages Art. No. 43, 12. Schloss Dagstuhl. Leibniz-Zent. Inform.,
		Wadern, 2016.
		
		\bibitem[Nic19]{nicaud_2}
		Cyril Nicaud.
		\newblock The \v{C}ern\'{y} conjecture holds with high probability.
		\newblock {\em J. Autom. Lang. Comb.}, 24(2-4):343--365, 2019.
		
		\bibitem[Oli12]{OtrAMS2012}
		Roberto~Imbuzeiro Oliveira.
		\newblock On the coalescence time of reversible random walks.
		\newblock {\em Trans. Amer. Math. Soc.}, 364(4):2109--2128, 2012.
		
		\bibitem[Oli13]{oliveira_mean2013}
		Roberto~Imbuzeiro Oliveira.
		\newblock Mean field conditions for coalescing random walks.
		\newblock {\em Ann. Probab.}, 41(5):3420--3461, 2013.
		
		\bibitem[OP19]{oliveira_peres_2019}
		Roberto~I. Oliveira and Yuval Peres.
		\newblock Random walks on graphs: new bounds on hitting, meeting, coalescing
		and returning.
		\newblock In {\em 2019 {P}roceedings of the {S}ixteenth {W}orkshop on
			{A}nalytic {A}lgorithmics and {C}ombinatorics ({ANALCO})}, pages 119--126.
		SIAM, Philadelphia, PA, 2019.
		
	\end{thebibliography}

\end{document}